\documentclass[a4paper, 11pt]{article}
\usepackage{anysize}
\marginsize{3,5cm}{2,5cm}{2,5cm}{2,5cm}
\usepackage{amssymb}
\usepackage{amsmath,amsbsy,amssymb,amscd}
\usepackage{t1enc}\pagestyle{myheadings}
\usepackage[cp1250]{inputenc}
\usepackage[british]{babel}
\usepackage[all]{xy}
\usepackage{color}
\usepackage{amsfonts}
\usepackage{latexsym}
\usepackage{amsthm}
\usepackage{mathrsfs}
\usepackage{hyperref}
\usepackage{graphicx}

\usepackage{color}
\usepackage{caption}
\usepackage{subcaption}
\usepackage{enumerate}

\definecolor{orange}{rgb}{1,0.5,0}




\usepackage{mathrsfs} 

\DeclareMathAlphabet{\mathpzc}{OT1}{pzc}{L}{it} 




\theoremstyle{definition}
\newtheorem{definition}{Definition}[section]
\newtheorem{theorem}[definition]{Theorem}

\newtheorem{proposition}[definition]{Proposition}
\newtheorem{corollary}[definition]{Corollary}
\newtheorem{lemma}[definition]{Lemma}
\newtheorem{example}[definition]{Example}

\newtheorem{remark}[definition]{Remark}
\newtheorem{question}[definition]{Question}
\newtheorem{claim}[definition]{Claim}

\def\cP{\mathcal{P}}
\def\geq{\geqslant}
\def\leq{\leqslant}
\def\R{\mathbb{R}}
\def\T{\mathbb{T}}

\def\Z{\mathbb{Z}}
\def\N{\mathbb{N}}

\def\id{\mathrm{id}}

\def\cB{\mathbb{B}}

\def\epsilon{\varepsilon}

\def\mf{\mathfrak}
\def\Lie{\operatorname{Lie}}
\def\ad{\operatorname{ad}}
\def\Ad{\operatorname{Ad}}

\def\End{\operatorname{End}}
\def\inj{\operatorname{inj}}

\newcommand{\bea}{\begin{eqnarray}}
  \newcommand{\eea}{\end{eqnarray}}
  \newcommand{\beab}{\begin{eqnarray*}}
  \newcommand{\eeab}{\end{eqnarray*}}

  \newcommand{\be}{\begin{equation}}
  \newcommand{\ee}{\end{equation}}

\newcommand{\set}[1]{\left\lbrace #1 \right\rbrace}
\newcommand{\abs}[1]{\left| #1 \right|}
\newcommand{\mc}{\mathcal}
\newcommand{\norm}[1]{\abs{\abs{#1}}}

\newcommand{\of}{\circ}
\newcommand{\inner}[2]{\left\langle #1, #2 \right\rangle}
\newcommand{\algexp}{\exp_{\operatorname{alg}}}
\newcommand{\geomexp}{\exp_{\operatorname{geom}}}
\newcommand{\alglog}{\log_{\operatorname{alg}}}
\newcommand{\geomlog}{\log_{\operatorname{geom}}}
\newcommand{\mbf}{\mathbf}

\title{Slow Entropy of Some Parabolic Flows}
\author{Adam Kanigowski \and Kurt Vinhage\footnote{K. V. was supported by the National Science Foundation under Award DMS 1604796} \and Daren Wei\footnote{D. W. was partially supported by the NSF grant DMS-16-02409}}
\date{}
\begin{document}
\maketitle
\begin{abstract}
We study nontrivial entropy invariants in the class of parabolic flows on
homogeneous spaces, quasi-unipotent flows. We show that topological complexity (ie, slow entropy) can be computed directly from the Jordan block structure of the adjoint representation. Moreover using uniform polynomial shearing we are able to show that the metric orbit growth (ie, slow entropy) coincides with the topological one, establishing hence variational principle for quasi-unipotent flows (this also applies to the non-compact case). Our results also apply to sequence entropy. We establish criterion for a system to have trivial topological complexity and give some examples in which the measure-theoretic and topological complexities do not coincide for uniquely ergodic systems, violating the intuition of the classical variational principle.
\end{abstract}

\section{Introduction}\label{sec:intro}

The study of dynamical systems typically fits into three paradigms: (partially) hyperbolic, parabolic and elliptic. Quasi-unipotent flows on homogeneous spaces are models for the parabolic regime, exhibiting behaviour such as quantitative equidistribution, polynomial mixing and controlled polynomial divergence of orbits. The most famous example of a quasi-unipotent flow is the horocycle flow on a constant negative curvature surface. Although quasi-unipotent flows have non-trivial statistical properties, including ergodicity, mixing, logarithm laws, they all have zero topological and metric entropy in the usual sense. It is therefore natural to ask for a refinement of the original definition that would allow one to distinguish such flows and describe their complexity. Two methods modifying the standard entropy theory have been developed. The first is the method of A. G. Kushnirenko \cite{Kushnirenko} called {\it sequence entropy} (Section \ref{sec:seq-entropy-def}), in which one allows a growing gaps between ``test times'' to see if orbits have diverged. The second was established by A. Katok and J. P. Thouvenot \cite{Kat-Tho} called {\it slow entropy} (Section \ref{sec:slow-entropy-def}), in which one adapts a dimensional characterization of entropy to characterize orbit growth for systems with lower complexity.

To define sequence entropy, we replace the standard wedge $\bigvee_{n=0}^N T^{-n}\mc P$ with a wedge allowing more time for points to separate, $\bigvee_{n=0}^N T^{-A_n}\mc P$, where $A_n$ is an exponentially growing sequence. One then makes the usual definitions through information functions. This method obtains a family of new isomorphism invariants, which was first used in \cite{Kushnirenko} to distinguish horocycle flow from its square, as well as characterize Kronecker systems as those with zero sequence entropy for any sequence. P. Hulse \cite{Hulse}, D. Newton \cite{Newton1} \cite{Newton2} \cite{Newton3} and E. Kurg \cite{Kurg} further developed sequence entropy and T. N. T. Goodman \cite{Goodman} adapted the definition to the topological setting.

For slow entropy, following the ideas of \cite{Katok2} \cite{Katok3}, one uses a dimensional characterization of entropy by considering coding spaces and coverings by Hamming balls. This was first done in \cite{Kat-Tho}, where a definition for the slow entropy was given in the setting of amenable group actions. Slow entropy was used to give a criterion for the smooth realization of $\Z^k$ actions. Rather than considering a fixed family of times in which to distinguish orbits, this invariant allows one to still consider all times. Instead, one chooses a new ``scale'' for which the asymptotic orbit growth rate is computed. In particular, one may use a polynomial family $(n^\chi)$ instead of an exponential one $(e^{\chi n})$ to obtain a useful invariant for systems with 0 entropy in the usual sense.

We calculate the precise value of slow entropy in polynomial scales any quasi-unipotent flow on finite-volume homogeneous spaces (Theorem \ref{thm:main}). We also compute the sequence entropy of such flows on compact homogeneous spaces (Theorem \ref{thm:mainco}). Furthermore, we also show the variational principle holds for both slow entropy and sequence entropy for quasi-unipotent flows (in the sense that the topological entropies are the supremums of their measure-theoretic entropies over the space of invariant measures). We show that zero topological slow entropy at all scales is equivalent to topological conjugate to a translation of a compact Abelian group, a version of a theorem in \cite{Ferenczi} in the topological category. Finally, we give several non-trivial counter-examples for the variational principle of slow entropy.

These results fit into a program of study for systems with low complexity. M. Ratner \cite{Ratner1} adapted the definition of slow entropy to distinguish horocycle flow and its Cartesian square up to Kakutani equivalence by replacing the Hamming distance with another distance function (i.e. $\bar{f}$-metric). An alternative approach to the slow entropy type invariant was developed by Blume in \cite{Blume1} \cite{Blume2} \cite{Blume3}, where he studied it in setting with slow growth, such as rank one systems. More recent work for slow entropy for higher rank actions appears in A. Katok, S. Katok and F. Rodriguez Hertz \cite{Katok5} and M. Hochman \cite{Hochman}. In the case of flows and transformations, one of the few nontrivial explicit calculations was done  for smooth surface flows by the first author \cite{Kanigowski}, with several applications. The slow entropy of a system also goes by another name, the {\it measure-theoretic complexity}. S. Ferenczi \cite{Ferenczi} proved several results using this terminology, including the characterization of Kronecker systems as those whose measure theoretic complexity is uniformly bounded (ie, has slow entropy 0 at all scales). The work of B. Host, B. Kra, and A. Maass \cite{HoKraMa} computes the topological complexities for nilsystems and establishes some corollaries. Our work generalizes their computation to arbitrary unipotent actions, as well as answer Question 4 of their paper by also computing the measure-theoretic complexity.

\paragraph{Plan of the paper.}
In Sections \ref{sec:slow-entropy-def}-\ref{sec:examples} we give definitions of metric and topological slow entropy, metric and topological sequence entropy, quasi-unipotent flows, introduce some notations and formulate our main results Theorem \ref{thm:main} and Theorem \ref{thm:mainco}. We also give several examples in which our formula can be explicitly computed. In Section \ref{sec:prelims}, we recall fundamental structures and tools from the theory of homogenous spaces. In Section \ref{sec:topologicalSlow}, we calculate the topological slow entropy of quasi-unipotent flows by proving many control lemmas on the decay rates of Bowen balls for comapct homogeneous spaces. In Section \ref{sec:SlowVar}, we calculate the measure-theoretic slow entropy of quasi-unipotent flows on compact homogeneous spaces, as well as discuss the relationships between the metric and topological slow entropies (in particular, we show that the variational principle does not hold for general slow entropy results). In both Sections \ref{sec:topologicalSlow} and \ref{sec:SlowVar}, we treat only compact homoeneous spaces because they are much easier to handle. In Section \ref{sec:noncompact}, we extend the results on slow entropy to the case of noncompact homogeneous spaces. We remark that this is more difficult since even though the divergence rates are the same as their compact counterparts, a local analysis is not sufficient because the injectivity radius can tend to 0. In Section \ref{sec:SeqeunceEntropy}, we calculate the both topological and metric sequence entropy of quasi-unipotent flow. In Section \ref{sec:specific}, we show the details in the computations for the examples in Section \ref{sec:examples}.

\subsection{Definition of Entropy Invariants Considered}
\label{sec:slow-entropy-def}

Because we consider many entropy-type invariants, we gather the definitions here for reference throughout the paper.

\subsubsection{Topological Slow Entropy}

Let $\varphi^t : X \to X$ or $f : X \to X$ be a uniformly continuous flow or transformation of a locally compact metric space. The following definitions are for flows, the analogous definitions for transformations are easily deduced. The {\it Bowen ball} of radius $\epsilon$ up to time $T$ (called a $(\epsilon,T)$-Bowen ball for short) is defined as:

\[ B_\varphi^T(x,\epsilon) = \bigcap_{t \in [0,T]} \varphi^{-t}(B(\varphi^t(x),\epsilon)) \]

If $K \subset X$ is a compact subset of $X$, let $N_{\varphi,K}(\epsilon,T)$ be the minimal number of $(\epsilon,T)$-Bowen balls required to cover $K$ (since $K$ is compact, this is finite). Let $S_{\varphi,K}(\epsilon,T)$ be the maximal number of $(\epsilon,T)$-Bowen balls which can be place disjointly in $X$ with centers in $K$. If $a : \R_+ \to \R_+$ is a function such that $a(T) \to \infty$ as $T \to \infty$, consider the asymptotics:

\begin{equation}
\label{eq:topo-limit}
\delta_{\varphi,K,a}^N = \limsup_{T \to \infty} \dfrac{N_{\varphi,K}(\epsilon,T)}{a(T)} \qquad \delta^S_{\varphi,K,a} =  \limsup_{T \to \infty} \dfrac{S_{\varphi,K}(\epsilon,T)}{a(T)}
\end{equation}

 We now assume that we have a fixed family of functions $a_\chi : \R_+ \to \R_+$, $\chi \in \R_+$ such that if $\chi < \chi'$, then $a_\chi = o(a_{\chi'})$.

\begin{definition}
The {\it (slow) topological entropy of $\varphi$ with respect to $\set{a_\chi}$} is:

\[ h_{\operatorname{top},a_\chi}(\varphi) = \sup_K \lim_{\epsilon \to 0} \sup \set{ \chi : \delta_{\varphi,K,a_\chi}^N > 0} = \sup_K \lim_{\epsilon \to 0} \sup \set{ \chi : \delta_{\varphi,K,a_\chi}^S > 0}  \]

There are two particularly useful class of functions $a_\chi$. One is the exponential class $a_\chi(T) = e^{\chi T}$. In this case the entropy defined above is the usual topological entropy. The other is the polynomial class $a_\chi(T) = T^\chi$, in which case we call $R$ the {\it polynomial slow entropy of $\varphi$} and say that the polynomial slow entropy of $\varphi$ {\it gives rate $T^R$}.
\end{definition}

The topological slow entropy is essentially the same as the topological complexity considered in \cite{HoKraMa}, however the topological complexity is an equivalence class of rates, whereas the topological slow entropy simplifies the notion by attaching a single number to the topological complexity. One easily deduces the topological complexity from our arguments.

\subsubsection{Metric Slow Entropy}
Our definition will match the definition of slow entropy following \cite{Kat-Tho} and \cite{Kanigowski}. While not exactly, we note that this type of invariant is closely related to the measure-theoretic complexity used in \cite{Ferenczi}. The same remarks made about the difference between topological complexity and topological slow entropy also hold here.

We make definitions for flows, analogous definitions for transformations can be deduced easily.
Let $(T_t)_{t\in\R}$ act on $(X,\cB,\mu)$ and
$\cP=\{P_1,...,P_k\}$ be a finite measurable partition of $X$. For $T\in \R_+$ and $x\in X$ we define the {\it coding} of $x$ to be a function $\phi_{\cP,r}(x) : [0,T] \to \set{1,\dots,k}$ defined by
\begin{equation}\label{eq:par0}
 \phi_{\cP,r}(x)=(x_s)_{s\in[0,T]},\text{ where }   x_s=i\; \text{   whenever  }\; T_{s}x\in P_i.
\end{equation}
For any two points $x,y\in X$ their {\it Hamming distance} with respect to $\mc P$ is given by
$$
\bar{d\,}_{\varphi,\mc P}^T(x,y):=\frac{T-\left|\{s\in[0,T]\;:\; x_s=y_s\}\right|}{T}.
$$
For $\epsilon>0$, let $B_{\varphi,\mc P}^T(x,\epsilon) = \set{ y \in X : \bar{d\,}_{\varphi,\mc P}^T(x,y) < \epsilon}$ be the {\it $\epsilon$-Hamming ball centered at $x$}. Then as in the definition of topological slow entropy, define:
\begin{equation}\label{bal1}
S(\cP,T,\epsilon) = \min \set{\abs{F} : \mu\left(\bigcup_{x\in F} B_{\varphi,\mc P}^T(x,\epsilon)\right) > 1-\epsilon}
\end{equation}
Analogously to \eqref{eq:topo-limit} for a function $a:\R_+\to \R_+$ such that $a(T)\to \infty$ as $T\to \infty$ we define
\begin{equation}\label{eq:metric-limit}
A(\varphi,\cP,\epsilon,a) = \limsup_{T \to \infty} \dfrac{S(\cP,T,\epsilon)}{a(T)}
\end{equation}
 We now assume that we have a fixed family of functions $a_\chi : \R_+ \to \R_+$, $\chi \in \R_+$ such that if $\chi < \chi'$, then $a_\chi = o(a_{\chi'})$.
\begin{definition}\label{def:metric}
We say that the {\it (slow) metric entropy of $\varphi$ with respect to $\set{a_\chi}$ is $h_{m,a_\chi}(\varphi)$} if:
$$
h_{m,a_\chi}(\varphi) =\sup_{\cP-\text{ finite partition}} h_{m,a_\chi}(\varphi,\cP),$$
where
$$
h_{m,a_\chi}(\varphi,\cP):=\lim_{\epsilon \to 0}\left( \sup \{\chi: A(\varphi,\cP,\epsilon,a_\chi)>0\}\right).
$$
\end{definition}

Recall that a (finite) partition $\cP$ is called a {\em generator} if the minimal $(T_t)_{t\in\R}$ invariant $\sigma$- algebra containing $\cP$ is the whole $\sigma$-algebra $\cB$.
\begin{proposition}(\cite{Kat-Tho}, Proposition 1.)\label{prop:gen} If $\cP$ is a generator, then
$$
h_{m,a_\chi}(\varphi)=h_{m,a_\chi}(\varphi,\cP).
$$
\end{proposition}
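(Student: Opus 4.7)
The plan is to mirror the classical Kolmogorov--Sinai generator theorem, replacing Shannon entropy by Hamming-ball counting and conditional-entropy approximation by direct Rokhlin/Hamming approximation. The inequality $h_{m,a_\chi}(\varphi) \geq h_{m,a_\chi}(\varphi, \cP)$ is immediate from the supremum in Definition~\ref{def:metric}, so the task reduces to proving $h_{m,a_\chi}(\varphi, \cQ) \leq h_{m,a_\chi}(\varphi, \cP)$ for every finite measurable partition $\cQ$.

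I would isolate two auxiliary steps. \emph{Monotonicity}: if a partition $\cQ'$ refines $\cQ$, then agreement of $\cQ'$-codings forces agreement of $\cQ$-codings at the same time, hence $B^T_{\varphi, \cQ'}(x,\epsilon) \subseteq B^T_{\varphi, \cQ}(x,\epsilon)$ and $h_{m,a_\chi}(\varphi, \cQ) \leq h_{m,a_\chi}(\varphi, \cQ')$. \emph{Insensitivity to bounded time refinement}: for $\cP_N := \bigvee_{t \in [-N,N]} T_{-t}^{-1}\cP$, one has $h_{m,a_\chi}(\varphi, \cP_N) = h_{m,a_\chi}(\varphi, \cP)$. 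This is the combinatorial core: the $\cP_N$-disagreement set of two points is the $N$-thickening of the $\cP$-disagreement set, so $\bar{d\,}^T_{\varphi, \cP_N}(x,y) \leq (2N+1)\,\bar{d\,}^T_{\varphi, \cP}(x,y)$ after a discrete-time sampling (or an analogous bound for typical orbits in continuous time, using that the $\cP$-coding has controlled jump density by measure-preservation). This produces $S(\cP_N, T, \epsilon) \leq S(\cP, T, \epsilon/(2N+1))$, so the $\epsilon \to 0$ limit in $h_{m,a_\chi}$ absorbs the factor $2N+1$.

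Now the generator hypothesis enters: since $\bigvee_N \sigma(\cP_N) = \cB$, for every $\delta > 0$ there exist $N = N(\delta)$ and a $\cP_N$-measurable partition $\tilde{\cQ}$ with $\mu(\cQ \triangle \tilde{\cQ}) < \delta$. Integrating $\mathbf{1}_{\cQ \triangle \tilde{\cQ}}(T_s x)$ over $(s,x) \in [0,T] \times X$ and applying Markov's inequality shows that outside a set of $x$ of measure at most $\sqrt{\delta}$, the orbit of $x$ meets $\cQ \triangle \tilde{\cQ}$ for at most a $\sqrt{\delta}$-proportion of time on $[0,T]$. Consequently for such ``good'' pairs $x,y$ one has $|\bar{d\,}^T_{\varphi, \cQ}(x,y) - \bar{d\,}^T_{\varphi, \tilde{\cQ}}(x,y)| \leq 2\sqrt{\delta}$, giving
\[
S(\cQ, T, \epsilon + 2\sqrt{\delta}) \leq S(\tilde{\cQ}, T, \epsilon) \leq S(\cP_N, T, \epsilon) \leq S(\cP, T, \tfrac{\epsilon}{2N+1}),
\]
where the middle inequality uses monotonicity for $\tilde{\cQ} \leq \cP_N$. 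Choosing $\delta$ with $\sqrt{\delta} \ll \epsilon$ and letting $\epsilon \to 0$ yields $h_{m,a_\chi}(\varphi, \cQ) \leq h_{m,a_\chi}(\varphi, \cP)$.

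The main obstacle is the refinement-insensitivity in continuous time. In discrete time the bound $\bar{d\,}^T_{\varphi, \cP_N} \leq (2N+1)\bar{d\,}^T_{\varphi, \cP}$ is trivial, but in continuous time the $N$-thickening of a disagreement set of small measure can a priori be much larger than the set itself if that set is highly disconnected. One must either restrict attention to a discrete-time sampling that captures the slow-entropy asymptotics correctly, or argue that for reasonable partitions of measure-preserving systems the $\cP$-coding along typical orbits has only $O(T)$ jumps, so that the thickening inflates the disagreement by at most a multiplicative constant. With this combinatorial control in hand the remaining pieces combine routinely, and the scales $\{a_\chi\}$ play a passive role since they are robust under multiplication of the Hamming radius by constants absorbed in the iterated $\delta,\epsilon \to 0$ limits.
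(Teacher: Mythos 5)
Note first that the paper does not prove this proposition at all: it is quoted from \cite{Kat-Tho} (Proposition 1 there), which is stated and proved for discrete-time actions, and the content you need to supply is precisely the adaptation to flows. Your skeleton is the right one and is essentially the Katok--Thouvenot argument: one inequality is trivial from the supremum, the other is obtained by approximating an arbitrary finite partition $\mc Q$ in $L^1(\mu)$ by a partition built from $\cP$ via the generator hypothesis, comparing Hamming distances (with a Fubini/Markov argument to discard a small bad set of orbits), and absorbing all losses in the iterated $\epsilon\to 0$, $\delta\to 0$ limits. Those parts are fine up to routine adjustments (e.g.\ in the chain $S(\mc Q,T,\epsilon+2\sqrt\delta)\le S(\tilde{\mc Q},T,\epsilon)$ you must also handle ball centers lying in the bad set and the fact that the paper ties the covering level and the Hamming radius to the same $\epsilon$).

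The genuine gap is exactly the step you flag as ``the combinatorial core'' and leave unresolved. Your $\cP_N=\bigvee_{t\in[-N,N]}\varphi^{-t}\cP$ is a join over a continuum of times, so it need not be a finite partition (hence $h_{m,a_\chi}(\varphi,\cP_N)$ is not even defined in the paper's framework), and the claimed bound $\bar{d\,}^T_{\varphi,\cP_N}\le(2N+1)\,\bar{d\,}^T_{\varphi,\cP}$ is false in continuous time: disagreement of the $\cP_N$-codings at time $s$ only forces a $\cP$-disagreement somewhere in $[s-N,s+N]$, and the $N$-thickening of a measurable subset of $[0,T]$ of measure $\eta T$ can have measure $T$ (scatter tiny intervals at spacing $2N$). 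Neither of your proposed repairs closes this: for an arbitrary measurable partition there is no $O(T)$ bound on coding jumps, and the reduction of flow slow entropy to a discrete-time sampling is itself a nontrivial unproven lemma. The standard fix avoids the thickening altogether: since $t\mapsto\varphi^{-t}P_i$ is continuous in measure, the invariant $\sigma$-algebra generated by $\cP$ is generated mod $\mu$ by countably many times, so for each $\delta$ one may take $\tilde{\mc Q}$ measurable with respect to a \emph{finite} join $\bigvee_{j=1}^{M}\varphi^{-t_j}\cP$, which is a finite partition. Then the disagreement set of the join-codings of $x,y$ on $[0,T]$ is contained in the union of the $M$ translates by $-t_j$ of the $\cP$-disagreement set on a slightly enlarged interval, giving $\bar{d\,}^T_{\vee_j\varphi^{-t_j}\cP}(x,y)\le M\,\bar{d\,}^T_{\varphi,\cP}(x,y)+2M\max_j|t_j|/T$ --- a finite union of translates, not a thickening --- after which your approximation and monotonicity steps go through and the factor $M$ is absorbed as $\epsilon\to 0$.
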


\subsubsection{Measure-Theoretic Sequence Entropy}
\label{sec:seq-entropy-def}

Recall if $\mc P = \set{P_1,\dots,P_k}$ is a partition of a measure space $(X,\mu)$, the entropy of $\mc P$ is defined as $H(\mc P) = \sum_{i=1}^k \mu(P_i) \log \mu(P_i)$.

\begin{definition}
Suppose that $T : (X,\mu) \to (X,\mu)$ is an invertible measure preserving transformation or flow on a Lebesgue space $X$ with probability measure $\mu$. For a given increasing sequence $A=\{t_1,t_2,\ldots,t_n,\ldots\}$ with $t_i \to \infty$, a measurable partition (finite or countable partition) $\mc P$ such that $H(\mc P)<\infty$, define
\begin{equation}
\begin{aligned}
&h_A(T,\xi)=\limsup_{n\to\infty}\frac{1}{n}H(\bigvee_{i=1}^nT^{-t_i}\mc P)\\
&h_{A,\mu}(T)=\sup_{\mc P}h_A(T,\mc P).
\end{aligned}
\end{equation}

We call $h_{A,\mu}(T)$ the {\it measure-theoretic sequence entropy of $T$ with respect to $A$}.
\end{definition}

\subsubsection{Topological Sequence Entropy}
\label{sec:top-seq-entropy}

\begin{definition}
Suppose that $(X,d)$ is a locally compact metric space and $T:X\to X$ is a flow or transformation. Let $\set{t_n : k \in \N_0}$ be any increasing sequence of real numbers (natural numbers in the case of a transformation) tending to $\infty$. Then define $B_T^N(x,\epsilon;\set{t_n}) = \bigcap_{n=0}^\infty T^{-t_n}(B(T^{t_n}(x),\epsilon))$. We say a set $E\subset X$ is $(A,n,\epsilon)-$separated (with respect to $T$) if $B_\varphi^N(x,\epsilon;\set{t_n}) \cap B_T^N(y,\epsilon;\set{t_n}) = \emptyset$ for every $x \not= y \in E$. For a compact set $K\subset X$, let $N(A,n,\epsilon,K)$ denote the largest cardinality of any $(A,n,\epsilon)-$separated set in $K$. Then define
\begin{equation}
\begin{aligned}
&h(A,\epsilon,K)=\limsup_{n\to\infty}\frac{1}{n}\log N(A,n,\epsilon,K)\\
&h_A(T)=\sup_K\sup_{\epsilon > 0}N(A,\epsilon,K)
\end{aligned}
\end{equation}

$h_A(T)$ is called {\it the topological sequence entropy of $T$ with respect to $A$}.
\end{definition}

\subsection{Statement of the Main Theorem}
\label{sec:results}

Let $\mf g$ be a Lie algebra of a connected Lie group $G$. An element $U \in \mf g$ is called {\it quasi-unipotent} if it can be written as $U = U' + Q$, where $\ad_{U'}^N = 0$ for some $N$, $Q$ is $\ad$-compact, and $[Q,U'] =0$. See Section \ref{sec:prelims} for any further undefined terms.

\begin{definition}
	\label{def:chain-basis}	
	Let $\mf g$ be a Lie algebra and $U = U' + Q \in \mf g$ be a quasi-unipotent element. A {\it chain in $\mf g$ with respect to $U$ of depth $m$} is a linearly independent set $\set{ X_j : 0 \le j \le m}$  such that $X_0$ is in the centralizer of $U$ and:
	
	\[ \ad_U(X_j) = X_{j-1} \mbox{ for all }1 \le j \le m\]

 A {\it double chain in $\mf g$ with respect to $U$ of depth $m$} is a linearly independent set \\ $\set{X_{j,i} : 0 \le j \le m, i = 0,1}$ and a number $\alpha$ such that $\ad_Q(X_{j,0}) = -\alpha X_{j,1}$, $\ad_Q(X_{j,1}) = \alpha X_{j,0}$ for all $0 \le j \le m$, $X_{0,i}$ is in the centralizer of $U'$ for $i = 0,1$ and:

\[ \ad_{U'}(X_{j,i}) = X_{j-1,i} \mbox{ for all }1 \le j \le m, i = 0,1\]

	A chain basis of $\mf g$ with respect to $U$ is a basis of chains and double chains. Then sequence of depths $(m_1,\dots,m_n)$ of the chains and double chains, with each double chain listed twice, is called the {\it chain structure} of $U$.

\end{definition}

We will see that all quasi-unipotent elements have a chain basis (Lemma \ref{lem:chain-basis}).  A quasi-unipotent element $U \in \Lie(G)$ induces a quasi-unipotent flow on a homogeneous space $\Gamma \backslash G$ by:

\[ \varphi_t(\Gamma g) = \Gamma g \algexp(tU)\]

where $\Gamma$ is a lattice (cofinite volume, discrete subgroup) and $G$ is unimodular.

\begin{theorem}
\label{thm:main}
Let $a_\chi(t) = t^\chi$ and $\varphi_t$ be a quasi-unipotent flow on a finite volume homogeneous space with chain structure $(m_1,\dots,m_n)$. Then the topological and measure-theoretic slow entropy of $\varphi_t$ with respect to $a_\chi$ are both $R$, where:

\[ R = \sum_{i=1}^n m_i(m_i+1)/2\]
\end{theorem}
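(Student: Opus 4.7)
The plan is to reduce the slow entropy calculation to a volume estimate for Bowen balls, obtained from the explicit polynomial form of the adjoint action $\Ad(\exp(tU))$ on a chain basis. Working in a right-invariant metric on $\Gamma\backslash G$, the displacement between $\varphi^t(x)$ and $\varphi^t(x\exp(\xi))$ is controlled by $\Ad(\exp(-tU))\xi = e^{-t\ad_U}\xi$. For a chain $\{X_0,\dots,X_m\}$ of depth $m$ one has
\[ e^{-t\ad_U}X_j = \sum_{k=0}^{j}\frac{(-t)^k}{k!}X_{j-k}, \]
so a perturbation of size $\delta$ in the direction $X_j$ produces orbit divergence of order $\delta\, t^j$ by time $t$. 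Double chains behave the same way, with an additional bounded rotation from $\ad_Q$ that does not affect polynomial rates.

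Consequently, for $y = x\exp(\xi)$ to remain in the $(\epsilon,T)$-Bowen ball around $x$, each chain-basis component $\xi_j$ of $\xi$ must satisfy $|\xi_j| \leq C\epsilon/T^j$. Taking the product over a chain basis of $\mf g$, a chain of depth $m_i$ contributes $T^{0+1+\cdots+m_i} = T^{m_i(m_i+1)/2}$ to the denominator, so the Haar volume of the Bowen ball is of order $C(\epsilon)/T^R$ with $R = \sum_i m_i(m_i+1)/2$. Standard packing and covering arguments on a compact set of positive Haar mass then yield topological slow entropy equal to $R$. The extension to finite-volume homogeneous spaces, carried out in Section~\ref{sec:noncompact}, requires additional care because the injectivity radius tends to zero in the cusps; I would control the thin-part contribution via Haar mass estimates in cuspidal neighbourhoods.

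For the metric slow entropy I would fix a generating partition $\cP$, so that Proposition~\ref{prop:gen} reduces the computation to $h_{m,a_\chi}(\varphi,\cP)$, and prove matching bounds. The upper bound $h_{m,a_\chi} \leq R$ follows by showing that a Bowen cover of sufficiently small radius $\epsilon' \ll \epsilon$ induces a Hamming $\epsilon$-cover, provided one uses equidistribution of Haar measure to discard orbits spending too much time close to $\partial\cP$. The lower bound $h_{m,a_\chi} \geq R$ is the main obstacle, and is where uniform polynomial shearing enters. The key point is that if $\xi$ has some chain-basis component $\xi_j$ with $|\xi_j| \geq C\epsilon/T^j$, the $t^j$-rate of shear in the direction of $X_0$ forces $\varphi^t(x)$ and $\varphi^t(y)$ to be separated by distance on the order of $\epsilon$ on a time interval of positive density in $[0,T]$; combined with Haar equidistribution (applied to indicator functions of neighbourhoods of $\partial\cP$) this implies the two orbits visit different elements of $\cP$ on such a set of times, giving a uniform lower bound on their Hamming distance. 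Thus the Hamming $c$-ball around $x$ is essentially contained in the box $\{|\xi_j| \leq C\epsilon/T^j\}$ of Haar volume $\sim 1/T^R$, yielding the required lower bound on $S(\cP,T,c)$. The main delicacy is making the shearing quantitatively uniform in the base point, so that the Hamming separation survives the boundary behaviour of $\cP$ and, in the noncompact setting, the shrinking of the injectivity radius.
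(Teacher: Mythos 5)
Your topological computation is exactly the paper's route: the chain-basis coefficient bounds $|\xi_j|\lesssim \epsilon T^{-j}$ give the two-sided Bowen-ball volume estimate $\mu(B^T_\varphi(\bar x,\epsilon))\asymp T^{-R}$ (Lemmas \ref{lem:polynomial-growth}--\ref{lem:bilipischitz}, Proposition \ref{prop:Bowen-ball}), followed by standard covering/packing. The genuine gap is in your metric lower bound. First, the assertion that a displacement outside the box forces separation ``on a time interval of positive density in $[0,T]$'' is precisely the quantitative step that needs a polynomial anti-concentration input; the paper supplies it with the Brudnyi--Ganzburg (Remez-type) inequality, Lemma \ref{lem:brudnyi}, applied in Lemma \ref{lemma:locbeh}. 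You assert it without any such tool. Second, and more seriously, separation of the lifts in $G$ does not imply the two points lie in different atoms of $\cP$ in $\Gamma\backslash G$: once the group-level displacement $X_t$ exceeds the injectivity radius, the orbits can be far apart on the cover yet re-approach downstairs and share atoms for long stretches, so your claimed containment of the Hamming ball in the time-zero box $\{|\xi_j|\le C\epsilon T^{-j}\}$ is not justified even when $\Gamma\backslash G$ is compact. The paper avoids this by an excursion decomposition: each close approach is followed by an excursion during which the distance stays below $\eta<\inj$ and is $\ge c_\varphi\eta$ at least $9/10$ of the time (Lemma \ref{lemma:locbeh}), and small Hamming distance then forces a single interval of length $\ge 8T/10$ on which the orbits stay $\eta$-close; hence the Hamming ball sits inside a \emph{time-shifted} Bowen ball of length $6T/10$ (Lemma \ref{lemma:longblock}, Corollary \ref{cor:ham-in-bowen}), which still has measure $O(T^{-R})$.

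Two further points. Your auxiliary tools are not available as stated: Haar equidistribution along individual orbits and the existence of a finite generating partition (to invoke Proposition \ref{prop:gen}) both presuppose ergodicity, which is not assumed for a general quasi-unipotent flow on a finite-volume space; they are also unnecessary, since it suffices to take partitions with atoms of diameter below $c_\varphi\eta$ (these refine to the full $\sigma$-algebra, so they compute the supremum from below) and to get the metric upper bound from the slow Goodwyn inequality, Theorem \ref{thm:Goodwyn}, together with the topological computation. Finally, for the noncompact case ``Haar mass estimates in cuspidal neighbourhoods'' is not enough of a plan: the paper's argument (Proposition \ref{prop:NoncompactProp}) restricts via the Birkhoff theorem to points spending a definite fraction of time in a fixed compact set, runs the excursion argument for lifts on the universal cover where the injectivity radius is infinite, and obtains the topological upper bound from the lower volume bound of Proposition \ref{prop:Bowen-ball}, which is the only part valid without an injectivity hypothesis.
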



As a corollary to Theorem \ref{thm:main}, one obtains a version of the variational principle for quasi-unipotent flows and slow entropy. Namely, that the topological slow entropy of a quasiunipotent flow is the supremum of the measure-theoretic slow entropies taken over all invariant measures. We will see that this property is fairly special (Section \ref{sec: counterexampleVar}).

\begin{theorem}\label{thm:mainco}
Let $\varphi_t$ be a quasi-unipotent flow on a comapct homogeneous space. Then with respect to the sequence $A_n = C\lambda^n$, the measure-theoretic sequence entropy and topological sequence entropy of $\varphi_t$ is $R \log \lambda$.
\end{theorem}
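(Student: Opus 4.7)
The plan is to reduce the sequence-entropy calculation to a local Lie-algebra analysis parallel to the proof of Theorem~\ref{thm:main}, exploiting the fact that for exponentially spaced test times $A_i = C\lambda^i$ the discrete sampling constraints reproduce, up to uniform constants, the continuous Bowen ball at the terminal time $T = A_n$. Since $\Gamma\backslash G$ is compact and the flow acts by right multiplication by $\algexp(tU)$, if $y = x\exp(v)$ for small $v \in \mf g$ then $\varphi^t(y) = \varphi^t(x)\exp(\Ad(\algexp(-tU))v)$. Expanding $v$ in the chain basis of Lemma~\ref{lem:chain-basis}: on each chain $\{X_0,\dots,X_m\}$, if $v = \sum_j a_j X_j$, then the $X_k$-coordinate of $\Ad(\algexp(-tU))v$ is a polynomial in $t$ of degree $m-k$ whose coefficients are linear in $a_k,\dots,a_m$; on a double chain the same structure holds in complex coordinates $Z_j = X_{j,0}+iX_{j,1}$, multiplied by the unit-modulus factor $e^{-i\alpha t}$ coming from $\ad_Q$.

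Requiring all such coordinates to have modulus less than $\epsilon$ at each of the sample times $A_1,\dots,A_n$ becomes, at the top level $k=0$ of each chain, a bound $|p(A_i)|<\epsilon$ for $i=1,\dots,n$ on a polynomial $p$ of degree $m$. Once $n \geq m+1$, inverting the Vandermonde matrix at the last $m+1$ nodes (whose consecutive ratios equal $\lambda$) yields
\[
|a_j| \leq C(\lambda,m)\,\epsilon\, A_n^{-j}, \qquad j = 0,\dots,m,
\]
and the lower-level constraints $k\geq 1$ are automatically implied by the top-level ones. Counting real degrees of freedom, with each double chain contributing twice, the sequence Bowen ball has Haar volume
\[
\mathrm{vol}\!\left(\bigcap_{i=1}^n \varphi^{-A_i}(B(\varphi^{A_i}(x),\epsilon))\right) \asymp C(\epsilon)\,A_n^{-R} = C(\epsilon)\,\lambda^{-nR},
\]
uniformly in $x$ and $n$, where $R = \sum_i m_i(m_i+1)/2$; this matches the continuous Bowen-ball rate from Theorem~\ref{thm:main} at time $A_n$.

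The uniform volume estimate gives $N(A,n,\epsilon,K) \asymp \lambda^{nR}$ by a standard packing/covering comparison, hence $h_A(\varphi) = \limsup_n \frac{1}{n}\log N(A,n,\epsilon,K) = R\log\lambda$. For the measure-theoretic sequence entropy, take a finite generator $\mc P$ (cf.\ Proposition~\ref{prop:gen}) consisting of cells of small diameter with piecewise smooth boundaries; interior atoms of $\bigvee_{i=1}^n \varphi^{-A_i}\mc P$ are, up to boundary strata, sequence Bowen pieces of Haar mass $\asymp \lambda^{-nR}$, so
\[
H\!\left(\bigvee_{i=1}^n \varphi^{-A_i}\mc P\right) = nR\log\lambda + O(1),
\]
yielding $h_{A,\mu}(\varphi) = R\log\lambda$ after dividing by $n$. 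The corresponding upper bound for an arbitrary partition follows because atoms of $\bigvee_{i=1}^n \varphi^{-A_i}\mc P$ are contained in sequence Bowen balls, so there are at most $\asymp \lambda^{nR}$ of them and $H \leq \log(\#\text{atoms})$.

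The main technical obstacle is establishing the uniformity of the Vandermonde inversion in $n$: after rescaling $t \mapsto t/A_n$ the nodes become $\lambda^{-m},\lambda^{-m+1},\dots,1$, a fixed invertible Vandermonde configuration whose inverse has operator norm depending only on $\lambda$ and $m$; this is what legitimises a single constant $C(\lambda,m)$ in the coefficient bounds. A secondary technicality in the measure-theoretic case is bounding the partition-boundary contribution: taking partitions with piecewise smooth boundaries of finite total volume and controlling their flow translates, their $\delta$-neighbourhood has Haar measure $O(\delta)$, and sending $n\to\infty$ before shrinking the partition absorbs this into the $O(1)$ term.
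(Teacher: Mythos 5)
There is a genuine gap at the very first step of your volume estimate. Membership of $\bar{y}$ in the sequence Bowen ball of $\bar{x}$ only tells you that the distances \emph{on the quotient} $\Gamma\backslash G$ are less than $\epsilon$ at the sample times $A_1,\dots,A_n$; it does not tell you that the chain-basis coordinates of the initial displacement $v$, pushed forward by $\Ad(\algexp(A_iU))$, are small at those times. The identity $d_{\Gamma\backslash G}(\varphi^{t}\bar{x},\varphi^{t}\bar{y})=\norm{X_t}$ (up to the bi-Lipschitz correction of Lemma \ref{lem:bilipischitz}) is only valid while the pair stays within the injectivity radius \emph{on the universal cover with the same lift}; between two widely spaced sample times the displacement can grow far beyond the injectivity radius and the orbits can return $\epsilon$-close at time $A_{i+1}$ via a nontrivial element of $\Gamma$, in which case your constraint ``$\abs{p(A_i)}<\epsilon$ for all $i$'' simply does not follow, and the Vandermonde inversion has nothing to bite on. This is precisely the point where the hard half of the theorem lives: the upper bound on the measure of sequence Bowen balls, i.e.\ the lower bound on both entropies. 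The paper closes it in Corollary \ref{SequenceBowen} by an induction on $N$: assuming $B_\varphi^N(\bar{x},\delta;\set{L\lambda^k})\subset B_\varphi^{L\lambda^N}(\bar{x},\epsilon)$, the coefficient bounds of Proposition \ref{prop:Bowen-ball} (via Lemma \ref{lem:polynomial-coefficients}) show the displacement on the cover stays of size $O(\epsilon)$ up to time $L\lambda^{N+1}$ (the loss is only the fixed factor $\lambda^{R}$), so for $\epsilon$ small compared with $\inj(\Gamma\backslash G)$ the points never split on the cover and the containment propagates to $N+1$. Your Vandermonde interpolation at geometrically spaced nodes is a perfectly good substitute for the interval estimate of Lemma \ref{lem:polynomial-coefficients}, but it must be wrapped in such a bootstrap (or some other argument excluding re-identification by $\Gamma$); as written, the step from sequence-Bowen membership to polynomial sample bounds is unjustified, and compactness/injectivity radius, which is what the theorem's hypothesis is really for, is never used.

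A secondary, more routine issue is the measure-theoretic upper bound: your argument that ``atoms of $\bigvee_{i=1}^n\varphi^{-A_i}\mc P$ are contained in sequence Bowen balls'' only holds for partitions whose atoms have small diameter, while $h_{A,\mu}$ is a supremum over \emph{all} finite-entropy partitions; the appeal to a ``generator'' via Proposition \ref{prop:gen} is also misplaced, since that proposition concerns slow entropy and the Kolmogorov--Sinai generator theorem is not available for sequence entropy in general. This is fixable either by a conditional-entropy approximation argument ($h_A(T,\mc P)\le h_A(T,\mc Q)+H(\mc P\mid\mc Q)$ with $\mc Q$ of small diameter) or, as the paper does, by invoking a Goodwyn-type inequality $h_{A,\mu}\le h_{A,\operatorname{top}}$ and using the already established topological value; your lower bound via a single fine partition is fine once the volume estimate above is repaired.
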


\begin{remark}
The compactness assumption allows one to guarantee that any separation of orbits happens at a local level: separation for the flow implies the points also separate in the universal cover. Because the injectivity radius becomes arbitrarily small, one needs estimates on how often points enter a cusp, separate on the universal cover, but remain close in $\Gamma \backslash G$. For Section \ref{sec:prelims} to Section \ref{sec:SlowVar} we concentrate on cocompact lattice case and extend our result to non-compact lattice in Section \ref{sec:noncompact} for slow entropy.
\end{remark}

\subsection{Examples of Quasi-Unipotent Flows}
\label{sec:examples}

The three main classes of examples of quasi-unipotent flows come from the case when:

\begin{enumerate}[(i)]
	\item $G$ is a semisimple group
	\item $G$ is a nilpotent group
	\item $G = H \ltimes N$ is the semidirect product of a semisimple group $H$ with a nilpotent group $N$
\end{enumerate}

We give an example of flows for each type described above.

\begin{example}
\label{ex:semisimple}
	Let $G = SL(d,\R)$, $\Gamma \subset G$ be any lattice, and
	
	\[ U = \begin{pmatrix}
	0 & 1 \\
	& 0 &  1 \\
	& & \ddots & \ddots \\
	& & & 0 & 1 \\
	& & & & 0
	\end{pmatrix} \qquad \algexp(tU) = \begin{pmatrix}
	1 & t & t^2/2 & \cdots & t^{d-1}/(d-1)! \\
	& 1 & t & \cdots & t^{d-2}/(d-2)! \\
	& & \ddots & \ddots & \vdots \\
	& & & 1 & t \\
	& & & & 1
	\end{pmatrix}\]
	
	We call $U$ the {\it principal} nilpotent element associated to the algebra $\mf{sl}(d,\R)$. In the special case of $SL(d,\R)$, any nilpotent algebra element is conjugate to a block form element:
	
	\[ U = \begin{pmatrix}
	U_1 \\
	& U_2 \\
	& & \ddots \\
	& & & U_n
	\end{pmatrix}
	\]
	
	where each $U_i \in \mf{sl}(d_i,\R)$ is the principal element. Note that this is exactly the Jordan normal form of the matrix $U$. Call each $U_i$ a {\it principal block} and the sequence $(\dim(U_1),\dots,\dim(U_n))$ the {\it block sequence of $U$}.
\end{example}

\begin{example}
\label{ex:nilpotent}
	Let $N$ be the matrix Lie group with Lie algebra:
	
	\[ \mf n = \set{ U(\mbf x,t) : \mbf x \in \R^d,t\in \R} \qquad U(\mbf x,t) = \begin{pmatrix}
	0 & x_1 & x_2 &  x_3&  \dots & x_d \\
	& 0& t & -t/2  & \dots & (-1)^dt/d \\
	& & 0& t  & \dots & (-1)^{d-1}t/(d-1) \\
	& & & \ddots& \ddots  & \vdots \\
	& & & & 0& t \\
	& & & & & 0
	\end{pmatrix} \]
	
	Let $\Gamma = N(\Z)$ be the $\Z$-points of $N$ with the coordinates described. One may check that $\Gamma$ is a lattice since it contains the $\Z^d$ coming from the $x_i$ coordinates, and that

\[ \algexp(U(\mbf 0,1)) = \begin{pmatrix}1 & 0 & 0 & \cdots & \cdots & 0 \\
                                                         & 1 & 1  & \\
                                                         &  & 1 & 1 \\
                                                           &   &   & \ddots & \ddots \\
                                                           & & &            & 1 & 1 \\
                                                             & & &       & & 1
 \end{pmatrix} \in \Gamma,\]

The unipotent flow induced by the algebra element \[ U = U((\alpha,-\alpha/2,\dots,(-1)^{d+1}\alpha/(d+1)),1)\] induces a unipotent flow on $\Gamma \backslash N$ which is transverse to the torus corresponding to the $x$-coordinates. The first return map is exactly the classical affine transformation:
	
	\[ F(x_1,\dots,x_d) = (x_1+\alpha,x_2+x_1,\dots,x_d+x_{d-1})\]
\end{example}

\begin{example}
\label{ex:twisted}
	Let $\rho : SL(d,\R) \to SL(N,\R)$ be a rational representation of $SL(d,\R)$.  Then let $G$ be the group:
	
	\[ \begin{array}{c}
	G = \set{ \begin{pmatrix}
		\rho(A) & v \\
		0 & 1
		\end{pmatrix} : A \in SL(d,\R), v \in \R^N} \\
	\\
	\Gamma = \set{ \begin{pmatrix}
		\rho(A) & v \\
		0 & 1
		\end{pmatrix} : A \in SL(N,\Z) \cap \rho(SL(d,\R)), v \in \Z^N}
	\end{array}
	\]
	
		Then take any nilpotent element of $\mf{sl}(d,\R)$ gives a flow on $\Gamma \backslash G$. This unipotent flow is a twisted combination of the two previous ones. Indeed, if we factor onto the semisimple component, we get Example \ref{ex:semisimple}. Furthermore, if $\gamma \in SL(N,\Z)\cap \rho(SL(d,\R))$ corresponds to a closed orbit of $U$ in the semisimple factor, then taking the restriction to that closed orbit gives a system similar to Example \ref{ex:nilpotent}.
\end{example}

We get the following Corollary of Theorem \ref{thm:main} (proofs can be found in section \ref{sec:specific}):

\begin{corollary}
\label{cor:specific}
	The topological and metric slow entropy of Examples \ref{ex:semisimple}, \ref{ex:nilpotent} and \ref{ex:twisted} give rate $T^R$, where:
	
	\begin{enumerate}[(i)]
		\item $R=\sum_{i=1}^{m}\frac{1}{6}k_i(4k_i+1)(k_i-1)+\sum_{i=1}^{m-1}\sum_{j=i+1}^m\frac{1}{3} k_i \left(k_i^2+3k_j^2-3k_j-1\right) $, where $(k_1,\dots,k_m)$, $k_i \le k_{i+1}$ is the block sequence of $U$\label{sld-formula}
		\item $R = d(d-1)/2$
		\item $R = R_{ss} + R_\rho$, where $R_{ss}$ is the rate coming from Part (\ref{sld-formula}) and $R_\rho = \sum \ell_i(\ell_i-1)/2$. Here, $\ell_i$ are the lengths of the Jordan blocks of $d\rho(U)$ acting on $\R^N$
	\end{enumerate}
\end{corollary}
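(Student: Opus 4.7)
The plan is to apply Theorem \ref{thm:main} to each example, reducing the corollary to a computation of the chain structure of $\ad_U$ on $\mf g$ and an evaluation of $R = \sum_i m_i(m_i+1)/2$. In each case this amounts to identifying the Jordan block sizes of $\ad_U$; $\mf{sl}_2$-representation theory via Clebsch--Gordan handles (i) and (iii), while a direct bracket computation handles (ii).

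For Example \ref{ex:semisimple}, I would embed each principal block $U_i$ in a standard $\mf{sl}_2$-triple so that the defining subspace $\R^{k_i} =: V_i$ is the irreducible $\mf{sl}_2$-representation of dimension $k_i$. Then $\mf{gl}(d,\R) = \bigoplus_{i,j} \Hom(V_i, V_j)$, and by Clebsch--Gordan
\[
\Hom(V_i, V_j) \;\cong\; V_i^* \otimes V_j \;\cong\; \bigoplus_{r=0}^{\min(k_i,k_j)-1} W_{k_i + k_j - 1 - 2r},
\]
where $W_\ell$ is the irreducible $\mf{sl}_2$-module of dimension $\ell$. On each $W_\ell$, $\ad_U$ acts as a single Jordan block of size $\ell$, so the chain depths from $\Hom(V_i, V_j)$ are $k_i + k_j - 2 - 2r$; passing from $\mf{gl}$ to $\mf{sl}$ only removes the identity, a depth-$0$ chain contributing nothing. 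Summing $m(m+1)/2$ splits naturally: the diagonal ($i=j$) contribution, via $s = k_i - 1 - r$, becomes $\sum_{s=0}^{k_i-1} s(2s+1) = \tfrac{1}{6} k_i(k_i-1)(4k_i+1)$; the off-diagonal contribution from the pair $\{(i,j),(j,i)\}$ with $k_i \le k_j$, after setting $a = k_j - k_i$, reduces to $\sum_{s=0}^{k_i-1}(a+2s)(a+2s+1)$, which expands to $\tfrac{1}{3} k_i(k_i^2 + 3k_j^2 - 3k_j - 1)$. Assembling yields the stated formula.

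For Example \ref{ex:nilpotent}, I would compute the brackets in $\mf n$ in the basis $T := U(\mbf 0, 1)$, $X_j := U(e_j, 0)$: the $X_j$'s commute pairwise (both matrices have only row 1 nonzero), and $\ad_T(X_j) = -X_{j+1} + (\text{higher-index terms})$ with $\ad_T(X_d) = 0$. Hence $\ad_T$ restricted to $\mathrm{span}(X_1, \dots, X_d)$ is lower triangular with a nonzero first subdiagonal, so conjugate to a single size-$d$ Jordan block. Since each $\ad_{X_j}$ annihilates $\mathrm{span}(X_1, \dots, X_d)$ and sends $T$ into it, $\ad_U = \ad_T + \sum \alpha_j \ad_{X_j}$ preserves $\mathrm{span}(X_1, \dots, X_d)$ with a two-dimensional kernel; computing $\dim \ker(\ad_U^k)$ for $k=1,2$ forces the chain structure to be $(d-1, 0)$, giving $R = d(d-1)/2$. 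Alternatively, since the flow is a suspension of the classical affine map on $\T^d$, a direct Bowen-ball count with the $k$-th coordinate of $F^n$ growing like $n^{k-1}$ recovers the same rate.

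For Example \ref{ex:twisted}, the decomposition $\mf g = \mf{sl}(d,\R) \oplus \R^N$ is $\ad_U$-invariant on each summand. On $\mf{sl}(d,\R)$ the analysis of (i) gives chain-depth contribution $R_{ss}$, while on $\R^N$ the ad-action is precisely $d\rho(U)$, whose Jordan blocks of sizes $\ell_1, \dots, \ell_k$ produce chain depths $\ell_i - 1$ and contribute $\sum \ell_i(\ell_i - 1)/2 = R_\rho$. Adding yields $R = R_{ss} + R_\rho$. The main obstacle is the algebraic bookkeeping in part (i): collapsing the triple sum over $(i, j, r)$ into the stated closed form requires the substitution above together with careful use of the $\sum s$ and $\sum s^2$ identities.
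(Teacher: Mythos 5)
Your proposal is correct. For parts (ii) and (iii) it essentially coincides with the paper's argument: for the nilpotent example the paper also deduces the chain structure $(d-1,0)$ from $\ad_U^{d-1}\neq 0$, $\ad_U^{d}=0$ and $\dim\mf n=d+1$ (your explicit bracket computation and kernel count supply the same facts in more detail, though the phrase ``computing $\dim\ker(\ad_U^k)$ for $k=1,2$'' is slightly loose --- what pins the structure down is the size-$d$ block on $\mathrm{span}(X_1,\dots,X_d)$ together with the two-dimensional kernel, which you do establish), and for the twisted example the paper uses exactly your $\ad_U$-invariant splitting $\mf{sl}(d,\R)\oplus\R^N$ with $\ad_U$ acting as $d\rho(U)$ on the second summand.

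For part (i) your route is genuinely different in its bookkeeping, though both rest on $\mf{sl}(2,\R)$-representation theory. The paper first proves Corollary \ref{cor:sl2-triple} (chains correspond to $\ad_X$-eigenvectors in the centralizer $C(U)$, with depth equal to the eigenvalue) and then computes $C(U)$ and the $\ad_X$-eigenvalues explicitly: Toeplitz-type elements $\sum E_{a,a+b}$ inside each diagonal block and in each off-diagonal $k_i\times k_j$ block, with a self-duality argument to double the upper-triangular contribution. You instead bypass the centralizer entirely: decomposing $\mf{gl}(d,\R)=\bigoplus_{i,j}\Hom(V_i,V_j)$ and applying Clebsch--Gordan gives the multiset of irreducible dimensions, hence the Jordan block sizes of $\ad_U$ and the chain depths $k_i+k_j-2-2r$ directly, and removing the identity only deletes a depth-$0$ chain; the same substitutions then reproduce the diagonal and off-diagonal sums (your off-diagonal count automatically accounts for both $\Hom(V_i,V_j)$ and $\Hom(V_j,V_i)$, replacing the paper's duality/doubling step). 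What your approach buys is a uniform, computation-free identification of the blocks for all pairs $(i,j)$; what the paper's approach buys is the stand-alone tool of Corollary \ref{cor:sl2-triple} (eigenvalues of $\ad_X$ on $C(U)$), which is convenient for practitioners and is reused verbatim in their calculation. Both yield the same arithmetic and the stated closed forms.
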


Finally, we give one last way of practically calculating the slow entropy of $U$ in the case of a semisimple group. If $U = U' + Q \in \mf g$ is a quasi-unipotent elements of a semisimple Lie algebra, an $\mf{sl}(2,\R)$-triple is a triple $(V,X,U')$ such that:

\[ [X,U'] = 2U' \qquad [X,V] = -2V \qquad [U',V] = X \]

The well-known Jacobson-Morosov theorem ensures the existence of $\mf{sl}(2,\R)$-triples for arbitrary unipotent elements. Let $G$ be semisimple, $U' \in \mf g$ be unipotent, $C(U')$ be the centralizer of $U'$ in $\mf g$ and $(V,X,U')$ be an $\mf{sl}(2,\R)$-triple associated to $U'$. Then $\ad_X(C(U')) = C(U')$, and all eigenvalues of $\ad_X$ on $C(U') \subset \mf g$ are non-negative integers. Let $d_n$ denote the dimension of the eigenspace for the eigenvalue $n$.

\begin{corollary}
\label{cor:sl2-triple}
The topological and metric slow entropy of the quasi-unipotent flow induced by $U$ with respect to the polynomial family is $R$, where:

\[ R = \sum_{n=0}^\infty d_n \cdot n(n+1)/2\]
\end{corollary}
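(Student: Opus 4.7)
The plan is to deduce Corollary \ref{cor:sl2-triple} directly from Theorem \ref{thm:main} by identifying the chain structure of $U$ with the decomposition of $\mf g$ into irreducible $\mf{sl}(2,\R)$-submodules under the adjoint action of the triple $(V,X,U')$. Writing $U = U' + Q$ with $Q$ ad-compact, the operator $\ad_Q$ is semisimple with purely imaginary spectrum and commutes with the nilpotent $\ad_{U'}$, so this is the Jordan decomposition of $\ad_U$. Hence the Jordan block sizes of $\ad_U$ on $\mf g_\C$ coincide with those of $\ad_{U'}$, and a block of $\ad_{U'}$ contributes a single chain to the chain structure of $U$ when it lies in the kernel of $\ad_Q$, and half of a double chain when $\ad_Q$ pairs it with a complex-conjugate block via a nonzero imaginary eigenvalue $\pm i\alpha$.

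Next I would invoke the standard representation theory of $\mf{sl}(2,\R)$: the adjoint action of the triple decomposes $\mf g = \bigoplus_i M_i$ into irreducible submodules, where each $M_i$ has dimension $\ell_i + 1$, the restriction $\ad_{U'}|_{M_i}$ is a single nilpotent Jordan block of size $\ell_i + 1$, and $C(U') \cap M_i$ is exactly the one-dimensional highest weight line, on which $\ad_X$ acts with eigenvalue $\ell_i$. Summing over $i$ identifies
\[ d_n \;=\; \dim \set{v \in C(U') : \ad_X(v) = nv} \;=\; \#\set{i : \ell_i = n}. \]
Each summand $M_i$ accounts for one of the chains (a single chain or one half of a double chain) produced by the Jordan block of $\ad_{U'}$ of size $\ell_i + 1$, so the multiset of chain depths of $U$ (with each double chain listed twice) is exactly $\set{\ell_i}_i$. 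Consequently
\[ \sum_i \frac{m_i(m_i+1)}{2} \;=\; \sum_i \frac{\ell_i(\ell_i+1)}{2} \;=\; \sum_{n \ge 0} d_n \cdot \frac{n(n+1)}{2}, \]
and the formula for $R$ in Corollary \ref{cor:sl2-triple} follows from Theorem \ref{thm:main}.

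The main technical point to verify is that the Jacobson--Morozov triple $(V,X,U')$ can be chosen compatibly with $Q$, i.e., that $V$ and $X$ both commute with $Q$ so that the $\mf{sl}(2,\R)$-decomposition of $\mf g$ refines the eigenspace decomposition of $\ad_Q$. This is done by applying Jacobson--Morozov inside the reductive centralizer of $\ad_Q$ in $\mf g$, which contains $U'$ since $[Q,U']=0$. Once this compatibility is established, the bookkeeping that matches single versus double chains against the count $d_n$---a double chain of depth $n$ corresponds to two $\mf{sl}(2,\R)$-summands of dimension $n+1$ that are swapped by $\ad_Q$, hence contributing $2$ to $d_n$ and precisely absorbing the doubling convention in Definition \ref{def:chain-basis}---goes through as claimed.
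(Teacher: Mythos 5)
Your argument is correct and follows essentially the same route as the paper: decompose the adjoint action of the $\mf{sl}(2,\R)$-triple into irreducibles, identify $C(U')$ with the highest-weight lines so that $d_n$ counts the chains of depth $n$, and apply Theorem \ref{thm:main}. You are in fact more careful than the paper's proof about the genuinely quasi-unipotent case (matching double chains of $U$ with conjugate pairs of Jordan blocks of $\ad_{U'}$), and the compatibility of the Jacobson--Morozov triple with $Q$ that you flag as the main technical point is not actually needed, since both $d_n$ and the chain structure are determined by the multiset of Jordan block sizes of $\ad_{U'}$ alone.
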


\section{Preliminaries on Homogeneous Spaces}
\label{sec:prelims}

In this section, we recall some basic facts from the theory of Lie groups and homogeneous spaces. Let $G$ be a Lie group with Lie algebra $\mf g$. Given $g \in G$, let $L_g, R_g : G \to G$ denote the left and right translations on $G$. Let $\algexp : \mf g \to G$ denote the exponential mapping of the Lie algebra $\mf g$ onto $G$. Then $\algexp$ has a local inverse $\alglog$ sending a neighborhood of $e \in G$ to a neighborhood of $0 \in \mf g$.

\subsection{Metrics and Measures on Homogeneous Spaces}
\label{sec:metrics}

Let $\Gamma \subset G$ be a (discrete) subgroup. We introduce a metric on a the homogeneous space $\Gamma \backslash G$ by first introducing a left-invariant metric on $G$. Fix an inner product $\inner{\cdot}{\cdot}_0$ on $\mf g$, and define for $v,w \in T_gG$:

\[ \inner{v}{w} = \inner{dL_{g^{-1}}v}{dL_{g^{-1}}w}_0\]

By construction, $\inner{\cdot}{\cdot}$ is left-invariant, so it induces a Riemannian metric on the space $\Gamma \backslash G$. The Riemannian metric also has an associated exponential mapping $\geomexp : \mf g \to G$, which is $C^\infty$ and satisfies

\begin{equation}
d_0\geomexp = \id.
\end{equation}

Like the algebraic exponential, there is a local inverse of $\geomexp$ which we will denote by $\geomlog$. The following is immediate from the definition of the inner product.

\begin{lemma}
The Riemannian volume is a (left) Haar measure on $G$. In particular, it is independent of the metric $\inner{\cdot}{\cdot}_0$ when determining a probability measure on a homogeneous space.
\end{lemma}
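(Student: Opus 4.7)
The plan is to verify both assertions by tracing through the definitions. For the first claim, I would first observe that the bilinear form $\inner{\cdot}{\cdot}$ defined on $TG$ is left-invariant by construction: if $h \in G$ and $v,w \in T_gG$, then by the chain rule $dL_{(hg)^{-1}} \circ dL_h = dL_{g^{-1}}$, so $\inner{dL_h v}{dL_h w}_{hg} = \inner{v}{w}_g$. The Riemannian volume form $d\mathrm{vol}$ is, in any local coordinate chart, given by $\sqrt{\det(g_{ij})}\,dx^1 \wedge \cdots \wedge dx^n$ where $g_{ij}$ is the matrix of $\inner{\cdot}{\cdot}$ with respect to the coordinate frame; equivalently, $d\mathrm{vol}$ assigns to any orthonormal basis of $T_gG$ the value $\pm 1$. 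Since $dL_h$ carries orthonormal frames at $g$ to orthonormal frames at $hg$, we get $L_h^* d\mathrm{vol} = d\mathrm{vol}$, i.e. the associated measure is left-invariant.

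Next I would invoke the uniqueness of left Haar measure on a locally compact group: any two left-invariant Radon measures of full support on $G$ agree up to a positive scalar. Since left translations act smoothly and the Riemannian volume is a nonzero smooth density, it is a left-invariant Radon measure of full support, hence is a positive scalar multiple of the abstractly-defined left Haar measure. This establishes the main claim.

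For the "in particular" statement, I would argue as follows. Two different choices of base inner product $\inner{\cdot}{\cdot}_0$ and $\inner{\cdot}{\cdot}_0'$ give two left-invariant Riemannian volumes on $G$; by the uniqueness argument above, they differ by a positive constant $c > 0$. Now, since $G$ is unimodular and $\Gamma$ acts on $G$ by left translations, the left Haar measure descends to a well-defined $G$-right-invariant Radon measure on $\Gamma \backslash G$, and the scalar $c$ carries over to the quotient. Because $\Gamma$ is a lattice, the total volume of $\Gamma \backslash G$ is finite for both choices, and the rescaling factor $c$ is annihilated upon normalizing to a probability measure. Hence the resulting probability measure on $\Gamma \backslash G$ is independent of $\inner{\cdot}{\cdot}_0$.

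The statement is essentially a bookkeeping lemma, so no step is a serious obstacle; the only subtlety is ensuring that the quotient procedure makes sense and respects normalization, which requires both the unimodularity of $G$ (invoked in the setup where the flow acts on the right) and the lattice property of $\Gamma$ (so that the total quotient volume is finite).
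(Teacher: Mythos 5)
Your proof is correct and follows exactly the route the paper has in mind: the paper simply asserts the lemma as immediate from the left-invariance built into the definition of $\inner{\cdot}{\cdot}$, and your argument (left-invariant metric $\Rightarrow$ left-invariant volume $\Rightarrow$ Haar by uniqueness, with the scalar ambiguity killed by normalizing on the finite-volume quotient $\Gamma\backslash G$) is just the standard spelling-out of that assertion. No discrepancies to report.
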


We say that $\bar{x} \in \Gamma \backslash G$ is {\it $\epsilon$-good} if the map $f_{\bar{x}} : B(0,\epsilon) \to \Gamma \backslash G$ is injective, where $f_{\bar{x}}(X) = \bar{x}\geomexp(X)$. Let $\mu$ denote the Haar measure of $\Gamma \backslash G$. By cocompactness of $G$, there exists $\epsilon_0$ such that every $\bar{x} \in \Gamma \backslash G$ is $\epsilon$-good whenever $\epsilon < \epsilon_0$.


		

\subsection{The Adjoint Representation}

 $G$ acts on itself by conjugation $C_g : h \mapsto g^{-1}hg$, and taking the derivative at the identity in the coordinate $h$ gives the {\it adjoint representation} of $G$ on $\mf g = T_eG$, $\Ad : G \to GL(\mf g)$. Taking the derivative of this map in the $g$ coordinate yields the Adjoint representation of the Lie algebra $\mf g$, $\ad : \mf g \to \End(\mf g)$, which coincides with the Lie bracket: $\ad(X)Y = [X,Y]$. The following are standard tools from the theory of Lie groups, which we write as a Lemma to reference.

\begin{lemma}
	\label{lem:adjoint}
	If $X,Y \in \mf g$,
	
	\[ \algexp(-X)\algexp(Y)\algexp(X)= \algexp(\Ad(X)Y) \]
	
	\[ \exp(\ad(X)) := \sum_{k=0}^\infty \dfrac{\ad(X)^k}{k!}  = \Ad(\algexp(X))\]
\end{lemma}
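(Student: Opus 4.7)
The plan is to establish the second identity first, then deduce the first from the general principle that the algebraic exponential is natural with respect to Lie group homomorphisms. Both statements are classical, so the argument really just unpacks the definitions of $\Ad$ and $\ad$ as derivatives of conjugation.

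For $\Ad(\algexp(X)) = \exp(\ad(X))$, I would fix $X \in \mf g$ and consider the smooth curve $\psi : \R \to GL(\mf g)$ defined by $\psi(t) = \Ad(\algexp(tX))$. Since $\Ad : G \to GL(\mf g)$ is a Lie group homomorphism and $t \mapsto \algexp(tX)$ is a one-parameter subgroup of $G$, the composition $\psi$ is a one-parameter subgroup of $GL(\mf g)$. By the very definition of $\ad$ as the differential of $\Ad$ at the identity, $\psi'(0) = \ad(X)$. Every one-parameter subgroup of $GL(V)$ has the form $t \mapsto \exp(tA)$ for a unique $A \in \End(V)$, so $\psi(t) = \exp(t\ad(X))$; setting $t = 1$ yields the claim.

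For the conjugation identity I would invoke the following naturality statement: for any smooth homomorphism $\phi : G \to H$ of Lie groups, $\phi(\algexp(Y)) = \algexp(d\phi_e Y)$ for every $Y \in \Lie(G)$. This holds because, as functions of $t$ in $\algexp(tY)$, both sides are one-parameter subgroups of $H$ with the same initial derivative $d\phi_e Y$, and one-parameter subgroups are determined by their initial velocity. Applying this with $\phi = C_g$, for which $d(C_g)_e = \Ad(g)$ by the definition of $\Ad$, and then specializing $g = \algexp(X)$, gives
\[ \algexp(-X)\algexp(Y)\algexp(X) = \algexp\!\bigl(\Ad(\algexp(X))\,Y\bigr), \]
which is the first stated identity (reading the compressed notation $\Ad(X)$ on the right as $\Ad(\algexp(X))$, which by the previous step equals $\exp(\ad(X))$).

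There is no substantive obstacle to either identity; they are textbook consequences of how $\Ad$ and $\ad$ are defined. The only point requiring a bit of care is the uniqueness of one-parameter subgroups with prescribed initial velocity, which follows from the linear ODE $\psi'(t) = \psi(t)\,\psi'(0)$ together with the standard existence and uniqueness theorem, and the parallel fact that $t \mapsto \algexp(tZ)$ is the unique one-parameter subgroup of $G$ whose initial velocity is $Z$.
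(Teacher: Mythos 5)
Your proof is correct: the paper offers no argument for this lemma (it is recorded as a standard fact from Lie theory), and your derivation via one-parameter subgroups, the definition of $\ad$ as $d(\Ad)_e$, and naturality of $\algexp$ under the homomorphism $C_g$ is exactly the standard textbook argument, carried out consistently with the paper's convention $C_g : h \mapsto g^{-1}hg$. Your reading of the compressed notation $\Ad(X)$ as $\Ad(\algexp(X))$ is also the intended one, as the paper's later use in Lemma \ref{lem:polynomial-growth} confirms.
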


\section{Topological Slow Entropy of Quasi-Unipotent Flows}\label{sec:topologicalSlow}

In this section we will prove the existence chain bases and several control lemmas on the decay rates of Bowen balls to calculate the topological slow entropy.

\subsection{Chain Bases}

Recall Definition \ref{def:chain-basis}.

\begin{lemma}\label{lem:chain-basis}
If $\mf g$ is a Lie algebra, every quasi-unipotent element $U \in \mf g$ has a chain basis
\end{lemma}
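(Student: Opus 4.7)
The plan is to use the commuting structure $[Q, U'] = 0$ to simultaneously diagonalize the semisimple part $\ad_Q$ while keeping track of the nilpotent Jordan decomposition of $\ad_{U'}$, and then descend from $\C$ back to $\R$ by taking real/imaginary parts.

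First I would complexify the Lie algebra to $\mf g_{\C}$ and observe that since $Q$ is $\ad$-compact the operator $\ad_Q$ is semisimple with purely imaginary spectrum, while $\ad_{U'}$ is nilpotent and commutes with $\ad_Q$. Decomposing $\mf g_{\C} = \bigoplus_{\alpha \in \R} V_{i\alpha}$ into $\ad_Q$-eigenspaces, each $V_{i\alpha}$ is preserved by $\ad_{U'}$, and the restriction of $\ad_{U'}$ to $V_{i\alpha}$ is still nilpotent. So on each $V_{i\alpha}$ choose a Jordan basis for $\ad_{U'}$, i.e.\ a disjoint union of chains $v_0, v_1, \dots, v_m$ with $\ad_{U'}(v_j) = v_{j-1}$ and $\ad_{U'}(v_0) = 0$.

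Next I would treat the cases $\alpha = 0$ and $\alpha \neq 0$ separately. When $\alpha = 0$, the subspace $V_0 = \ker(\ad_Q)_{\C}$ is the complexification of the real kernel $\ker(\ad_Q|_{\mf g})$ (since $\ad_Q$ is a real operator), so the Jordan bases can be chosen to consist of real vectors $X_0, \dots, X_m$. Because $X_j \in \ker \ad_Q$ we get $\ad_U(X_j) = \ad_{U'}(X_j) = X_{j-1}$, and in particular $X_0$ lies in the centralizer of both $Q$ and $U'$, hence of $U$. This gives the chains. When $\alpha \neq 0$, note that complex conjugation swaps $V_{i\alpha}$ and $V_{-i\alpha}$, so a Jordan chain $v_0, \dots, v_m$ in $V_{i\alpha}$ yields a conjugate chain $\bar v_0, \dots, \bar v_m$ in $V_{-i\alpha}$. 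Writing $v_j = X_{j,0} + i X_{j,1}$ with $X_{j,i} \in \mf g$, the equation $\ad_Q(v_j) = i\alpha v_j$ unpacks to $\ad_Q(X_{j,0}) = -\alpha X_{j,1}$ and $\ad_Q(X_{j,1}) = \alpha X_{j,0}$, and $\ad_{U'}(v_j) = v_{j-1}$ unpacks to $\ad_{U'}(X_{j,i}) = X_{j-1,i}$ for $i = 0,1$; also $X_{0,0}, X_{0,1} \in \ker \ad_{U'}$. This is exactly a double chain of depth $m$ with the required $\alpha$.

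Finally I would verify that putting these pieces together gives a basis of $\mf g$. The real span of $\{X_j\}_{\alpha = 0}$ and $\{X_{j,0}, X_{j,1}\}_{\alpha > 0}$ realifies the complex Jordan basis $\bigcup_\alpha \{v_j\}$ via the standard identification between $V_{i\alpha} \oplus V_{-i\alpha}$ and its real form, hence these vectors are $\R$-linearly independent and span $\mf g$.

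The main subtlety, rather than an obstacle, is to make the $\R$/$\C$ bookkeeping clean: one must pick matching Jordan bases of $V_{i\alpha}$ and $V_{-i\alpha}$ that are genuinely complex conjugates of one another (so that taking real and imaginary parts yields an $\R$-basis rather than a redundant set), and one must make sure $V_0$ really is the complexification of a real subspace so that its Jordan basis can be chosen in $\mf g$. Both of these follow formally from the realness of $\ad_Q$ and $\ad_{U'}$, but they are the only points that actually use the quasi-unipotent hypothesis $[Q, U'] = 0$ beyond the superficial observation that the operators commute.
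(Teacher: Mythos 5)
Your proof is correct and follows essentially the same route as the paper: both arguments amount to producing the real Jordan structure of $\ad_U$ adapted to the decomposition $U = U' + Q$, reading chains off the eigenvalue-$0$ blocks and double chains off the $\pm i\alpha$ blocks. The only difference is presentational: the paper invokes the real Jordan normal form of $\ad_U$ directly, whereas you derive it by diagonalizing $\ad_Q$ over $\C$ and taking Jordan bases of $\ad_{U'}$ on each eigenspace before realifying, which has the mild advantage of making explicit that the semisimple and nilpotent parts of each block really are $\ad_Q$ and $\ad_{U'}$, exactly as the double-chain definition requires.
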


\begin{proof}
	By definition, an element is quasi-unipotent if and only if $\ad_U$ has purely imaginary eigenvalues. The Jordan normal form of $\ad_U$ allow us to find a basis of blocks, where the real eigenvalues (only 0) are upper triangular blocks and the imaginary eigenvalues have block-upper triangular structure. For each block of the form:
	
	\[\begin{pmatrix}
	0 & 1 \\
	  & 0 & 1 \\
	 & & \ddots &\ddots \\
	 & & & 0 & 1 \\
	 & & & & 0
	\end{pmatrix}\]
	
	let $X_{j}^i$ be elements of $\mf g$ corresponding the the $e_{j+1}$. By definition, $X_0^i$ is the eigenvector, and $\ad_U(X_j^i) = X_{j-1}^i$. Let $Q_\alpha = \begin{pmatrix} 0 & \alpha \\ -\alpha & 0 \end{pmatrix}$ and $I_2$ be the identity $2 \times 2$ matrix, so that the eigenvalues of $Q_\alpha$ are $\pm \alpha i$. For each block of the form:

        \[\begin{pmatrix}
         Q_\alpha & I_2 \\
         & Q_\alpha & I_2 \\
         & & \ddots & \ddots \\
         & & & Q_\alpha & I_2 \\
         & & & & Q_\alpha
           \end{pmatrix} \]

let $X_{j,0}^i$ denote the element corresponding to $e_{2j+1}$ and $X_{j,1}^i$ denote the element corresponding to $e_{2j+2}$.
\end{proof}

Recall that in Section \ref{sec:metrics} we were able to choose the metric $\inner{\cdot}{\cdot}_0$ on $\mf g$ in any way we wish. We now specify it by fixing a chain basis of $\mf g$ for $U$, and declaring it to be orthonormal (this is well-defined and determines the metric uniquely).

\subsection{Approximation of Bowen Balls}

In this section, we work in greater generality by letting $\Gamma$ be any discrete subgroup (not necessarily cocompact or cofinite volume). In particular, we allow the case of $\Gamma = \set{e}$. Recall that the Bowen metrics are defined as $d_T^\varphi(\bar{x},\bar{y}) = \sup_{0 \le t \le T} d(\varphi^t(\bar{x}),\varphi^t(\bar{y}))$. In particular, the $(\epsilon,T)$-Bowen ball is contained in the usual ball of radius $\epsilon$. Given $\bar{x},\bar{y} \in \Gamma \backslash G$, we may write $d_{\Gamma \backslash G}(\bar{x},\bar{y}) = d_G(x,y)$ for some pair of points $x,y \in G$, by definition. Furthermore, we may write $y = xg$ for some $g \in G$, in which case $d_G(x,y) = d_G(x,xg) = d_G(e,g)$ since our metric is left-invariant. Say that $\bar{x} \in \Gamma \backslash G$ is {\it $\epsilon$-good} if the map $f_{\bar{x}} : B(0,\epsilon) \to \Gamma \backslash G$ is injective, where $f_{\bar{x}}(X) = \bar{x}\geomexp(X)$. Then if $K$ is a compact set in $\Gamma \backslash G$, let $\inj(K) = \sup \set{ \epsilon \ge 0 : \mbox{every }\bar{x} \in K\mbox{ is }\epsilon\mbox{-good}}$. If $K = \set{\bar{x}}$, we will denote $\inj(\set{\bar{x}}) = \inj(\bar{x})$, and call $\inj(K)$ the {\it injectivity radius} of $K$. Let $\mu$ denote the Haar measure of $\Gamma \backslash G$.

\begin{lemma}\label{lem:CompactApprox}
For every $\delta > 0$, there a compact subset $K \subset \Gamma\backslash G$ such that $\mu(K) > 1-\delta$. Furthermore, for any compact set $K$, $\inj(K) > 0$.
\end{lemma}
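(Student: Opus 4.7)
The first assertion is a standard regularity statement, so the plan is to derive it from the fact that a finite Borel measure on a second countable, locally compact Hausdorff space is inner regular. Concretely, $G$ is a Lie group hence $\sigma$-compact, so $\Gamma \backslash G$ admits an exhaustion by an increasing sequence of compact sets $K_1 \subset K_2 \subset \cdots$ whose union is all of $\Gamma \backslash G$. Under the tacit hypothesis that $\mu$ is a normalized Haar probability measure (the only case in which the conclusion $\mu(K) > 1-\delta$ makes literal sense, i.e.\ $\Gamma$ is a lattice), continuity of $\mu$ from below yields $\mu(K_n) \to 1$, and we set $K = K_n$ for $n$ large enough.

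For the second assertion I would proceed in two steps. First, establish pointwise positivity: fix $\bar x = \Gamma x$ and use discreteness of $\Gamma$ to choose a neighborhood $W \ni e$ in $G$ with $W \cap \Gamma = \{e\}$. Since conjugation by $x$ is a homeomorphism and $\geomexp$ is a local diffeomorphism at $0$, for $\epsilon$ small enough the set $x\,\geomexp(B(0,\epsilon))\,\geomexp(-B(0,\epsilon))\,x^{-1}$ is contained in $W$ and $\geomexp$ is injective on $B(0,2\epsilon)$. Any failure of injectivity of $f_{\bar x}$ at points $X \neq Y \in B(0,\epsilon)$ would yield
\[ \gamma := x\,\geomexp(Y)\,\geomexp(-X)\,x^{-1} \in \Gamma \cap W = \{e\}, \]
forcing $\geomexp(X)=\geomexp(Y)$ and contradicting injectivity of $\geomexp$ on $B(0,2\epsilon)$.

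Second, upgrade to uniform positivity on a compact $K$ by contradiction. If $\inj(K)=0$, there exist $\bar x_n \in K$ and $X_n \neq Y_n$ in $B(0,1/n)$ with $f_{\bar x_n}(X_n) = f_{\bar x_n}(Y_n)$, producing $\gamma_n = x_n \,\geomexp(Y_n)\,\geomexp(-X_n)\,x_n^{-1} \in \Gamma \setminus \{e\}$ for appropriate lifts $x_n$. After passing to a convergent subsequence $\bar x_n \to \bar x_\infty \in K$ and selecting the lifts $x_n$ in a precompact neighborhood of a fixed lift $x_\infty$ of $\bar x_\infty$, continuity of multiplication and conjugation give $\gamma_n \to e$, contradicting discreteness of $\Gamma$.

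The main obstacle is the bookkeeping of lifts in the last step: convergence $\bar x_n \to \bar x_\infty$ holds only modulo the $\Gamma$-action, so one must choose the representatives $x_n$ coherently. I would handle this by covering $K$ with finitely many open sets on which the quotient $G \to \Gamma\backslash G$ admits continuous local sections (which exists whenever $\Gamma$ is discrete), ensuring a uniform precompact choice of lifts to which the convergence argument can be applied.
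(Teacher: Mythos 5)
Your proof is correct, and the first half is essentially the paper's argument: both exhaust $\Gamma\backslash G$ by compact sets (the paper uses the projections of $\geomexp(B(0,r))$) and apply continuity of $\mu$ from below, with the tacit normalization that $\mu$ is the Haar probability measure when $\Gamma$ is a lattice. For the second half you take a more self-contained route: the paper simply invokes, without proof, that $\bar x \mapsto \inj(\bar x)$ is continuous and positive and then takes a minimum over the compact set $K$, whereas you prove pointwise positivity directly from discreteness of $\Gamma$ together with local injectivity of $\geomexp$, and then replace the appeal to continuity by a sequential compactness argument (with the lift bookkeeping handled by local sections of the covering $G \to \Gamma\backslash G$). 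Your version buys a complete argument from first principles at the cost of length; the paper's buys brevity by citing a standard fact about injectivity radii. Two cosmetic points to tighten: since the metric is only left-invariant, the Riemannian exponential need not satisfy $\geomexp(X)^{-1}=\geomexp(-X)$, so the group element witnessing non-injectivity should be written $\gamma = x\,\geomexp(Y)\,\geomexp(X)^{-1}x^{-1}$ (this changes nothing, as $\geomexp(X_n)^{-1}\to e$ by continuity); and the claim $\gamma_n \neq e$ in the compactness step deserves the one-line remark that $\gamma_n = e$ would force $\geomexp(X_n)=\geomexp(Y_n)$ with $X_n \neq Y_n$ in a ball on which $\geomexp$ is injective for large $n$, which is exactly your pointwise argument.
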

\begin{proof}
Note that $\Gamma \backslash G$ is an increasing union of compact sets $X_r = \geomexp(B(0,r)) \subset \Gamma \backslash G$. We may thus find $r$ such that $\mu(X_r) > 1- \delta$. Since $\bar{x} \mapsto \inj(\bar{x})$ is continuous and positive, for any compact $K$, there exists some $\epsilon$ such that $\inj(K) > \epsilon$.
\end{proof}

Note that if $\Gamma$ is cocompact or $\Gamma = \set{e}$, then $\inj(\Gamma \backslash G) > 0$ (in fact for lattices, the converse is true as well). Note that

\begin{equation}
\label{eq:homo-divergence}
d_{\Gamma \backslash G}(\varphi_t(\bar{x}),\varphi_t(\bar{y})) = d_G(x\algexp(tU),xg\algexp(tU)) = d_G(e,\algexp(-tU)g\algexp(tU))
\end{equation}

as long as $d_G(e,\algexp(-tU)g\algexp(tU)) < \inj(\varphi_t(\bar{x}))$. Note that this only holds for a bounded (but large) set of $t \in \R$. However, because we wish to consider the case when $\sup_{0 \le t \le T} d(\varphi^t(\bar{x}),\varphi^t(\bar{y})) < \epsilon$, in the case of a cocompact lattice we are free to apply \eqref{eq:homo-divergence}.

If $\epsilon$ is sufficiently small, then we can write $g = \algexp(X)$ for some $X \in \mf g$.  Fix a quasi-unipotent element $U = U' + Q$. We may therefore write $X$ in terms of the fixed chain basis chosen, $X = \sum a_j^i(0)X_{j}^i + \sum b_j^i(0)X_{j,0}^i + c_j^i(0)X_{j,1}^i$, where the first sum corresponds to the chains and the second to the double chains. Then let $a_j^i(t)$, $b_j^i(t)$ and $c_j^i(t)$ be the functions depending on $t$ by the formula:

\begin{equation}
\label{eq:define-Xt}
X_t := \alglog(\algexp(-tU)\algexp(X)\algexp(tU)) = \sum_{i,j} a_j^i(t)X_{j}^i + \sum_{i,j} b_j^i(t)X_{j,0}^i + c_j^i(t)X_{j,1}^i
\end{equation}

\begin{lemma}
\label{lem:polynomial-growth}
The functions $a_j^i(t)$ are polynomials in $t$, whose coefficients depend linearly on $\set{a_j^i(0)}$. Furthermore, the coefficients of $t^k$ for the polynomials $a_0^i(t)$ are proportional to $a_k^0(0)$, with the proportionality given by universal constants.

The functions $b_j^i(t)$ and $c_j^i$ are given by

\begin{eqnarray*}
b_j^i(t) & = & (\cos(\alpha t)f_j^i(t) - \sin(\alpha t)g_j^i(t))  \\
c_j^i(t) & = & (\sin(\alpha t) f_j^i(t)+ \cos(\alpha t)g_j^i(t))
\end{eqnarray*}

 for polynomials $f_j^i(t)$ and $g_j^i(t)$, whose coefficients depend lienarly on $\set{b_j^i(0)}$ and $\set{c_j^i(0)}$, respectively.  Furthermore, the coefficients of $t^k$ for the polynomials $f_0^i(t)$ and $g_0^i(t)$ are proportional to $b_k^i(0)$ and $c_k^i(0)$, respectively, with the proportionality given by universal constants.
\end{lemma}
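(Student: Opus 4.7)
The plan is to combine Lemma \ref{lem:adjoint} with the commuting decomposition $U = U' + Q$. By Lemma \ref{lem:adjoint},
\[
X_t \;=\; \alglog\!\big(\algexp(-tU)\algexp(X)\algexp(tU)\big) \;=\; \Ad(\algexp(-tU))X \;=\; \exp(-t\,\ad_U)X.
\]
Since $[U',Q]=0$ implies $[\ad_{U'},\ad_Q]=0$, the factors decouple:
\[
\exp(-t\,\ad_U) \;=\; \exp(-t\,\ad_{U'})\,\exp(-t\,\ad_Q).
\]
Because $\ad_{U'}^N = 0$, the first factor is the truncated series $\sum_{k=0}^{N-1}\tfrac{(-t)^k}{k!}\ad_{U'}^k$, which is polynomial in $t$ with operator coefficients; by $\ad$-compactness of $Q$, the second factor is a trigonometric (rotation) operator.

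The next step is to read off the actions on a chain basis. By uniqueness of the additive Jordan decomposition of $\ad_U\in\End(\mf g)$ into commuting semisimple and nilpotent summands, $\ad_Q$ must be the semisimple part and $\ad_{U'}$ the nilpotent part of $\ad_U$. Hence on a chain $\{X_0^i,\dots,X_{m_i}^i\}$, which lies in the generalized eigenspace of $\ad_U$ for eigenvalue $0$, the semisimple part $\ad_Q$ vanishes and $\ad_{U'}(X_j^i) = \ad_U(X_j^i) = X_{j-1}^i$. On a double chain with parameter $\alpha$, $\ad_Q$ preserves each plane $\mathrm{span}(X_{j,0}^i,X_{j,1}^i)$ and acts there via the matrix $\begin{pmatrix}0 & \alpha \\ -\alpha & 0\end{pmatrix}$, while $\ad_{U'}(X_{j,k}^i)=X_{j-1,k}^i$ for $k\in\{0,1\}$, matching Definition \ref{def:chain-basis}.

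A direct computation now finishes the lemma. For a chain, $\exp(-t\,\ad_Q)$ is the identity, so the chain component of $X_t$ equals $\exp(-t\,\ad_{U'})$ applied to $\sum_j a_j^i(0) X_j^i$, giving
\[
a_\ell^i(t) \;=\; \sum_{j \geq \ell} a_j^i(0)\,\frac{(-t)^{j-\ell}}{(j-\ell)!},
\]
polynomial in $t$ with coefficients linear in $\{a_j^i(0)\}$; at $\ell=0$, the coefficient of $t^k$ is $\tfrac{(-1)^k}{k!} a_k^i(0)$. For a double chain, applying $\exp(-t\,\ad_Q)$ first produces the rotation by $\alpha t$ within each $2$-plane, mixing $b_j^i(0)$ and $c_j^i(0)$ via $\cos(\alpha t)$ and $\sin(\alpha t)$; applying $\exp(-t\,\ad_{U'})$ afterwards shifts chain indices down with polynomial weights $\tfrac{(-t)^{j-\ell}}{(j-\ell)!}$. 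Setting
\[
f_\ell^i(t) := \sum_{j \geq \ell} b_j^i(0)\,\frac{(-t)^{j-\ell}}{(j-\ell)!}, \qquad g_\ell^i(t) := \sum_{j \geq \ell} c_j^i(0)\,\frac{(-t)^{j-\ell}}{(j-\ell)!},
\]
one reads off the claimed expressions for $b_\ell^i(t)$ and $c_\ell^i(t)$, and the proportionality assertion at $\ell=0$ follows as in the chain case.

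The step I expect to be the main conceptual point — rather than a computational obstacle — is the identification of $\ad_{U'}$ and $\ad_Q$ with the nilpotent and semisimple parts of $\ad_U$. This is what ensures the clean decoupling of chain versus double-chain blocks and forces $\ad_Q\equiv 0$ on chains. It is a one-line consequence of uniqueness of the abstract Jordan decomposition, once one notes that $\ad_{U'}$ is nilpotent by hypothesis and $\ad_Q$ is diagonalizable over $\C$ with purely imaginary eigenvalues by $\ad$-compactness; but it is the pivot on which the whole computation rests.
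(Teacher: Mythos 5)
Your proposal is correct and follows essentially the same route as the paper: both express $X_t=\Ad(\algexp(\mp tU))X=\exp(\mp t\,\ad_U)X$ via Lemma \ref{lem:adjoint}, split off the rotation factor $\exp(\mp t\,\ad_Q)$ by commutativity for the double chains, and expand the nilpotent exponential to read off the polynomials and the proportionality of the top-level coefficients (the sign of $t$ only reflects the $\Ad$ convention and is immaterial). Your additional appeal to uniqueness of the additive Jordan decomposition to conclude $\ad_Q=0$ on chains is valid but avoidable, since on a chain the paper simply expands $\exp(t\,\ad_U)$ directly from the defining relation $\ad_U X_j=X_{j-1}$.
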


\begin{proof}
	Since all definitions are linear, and each chain and double chain is preserved by $\ad_U$, we may address each chain and double chain separately. We first address the case of chains. Assume that $X = \sum_j a_j(0)X_j$ lies in a single chain of depth $m$. Using the relations determined in the definition of chain basis (Definition \ref{def:chain-basis}), we get:
	
\begin{eqnarray*}
	X_t & = & \alglog(\algexp(-tU)\algexp(X)\algexp(tU)) \\
	    & = & \alglog\algexp(\Ad(\algexp(tU))X) \\
	    & = & \Ad(\algexp(tU))X \\
	    & = & \exp(\ad(tU))X \\
	    & = & \sum_{k=0}^\infty \ad(tU)^kX/k! \\
	    & = & \sum_{j=0}^m \sum_{k=0}^{j} a_j(0)\ad(tU)^kX_j/k! \\
	    & = & \sum_{j=0}^m \sum_{k=0}^{j} t^k a_j(0) X_{j-k}/k! \\
	    &= & \sum_{j=0}^m \left(\sum_{k=j}^{m} \dfrac{t^{k-j}a_{k}(0)}{(k-j)!} \right) X_j
\end{eqnarray*}
	
   For the last part of the lemma, notice that the polynomial for the $X_{0}$ term is exactly $a_{0}(t) = \sum_{k=0}^{m} t^k a_{k}(0) /k!$. Now suppose that $\set{X_{j,0},X_{j,1}}$ is a double chain. Then note that since $U = U' + Q_\alpha$ and $[U',Q_\alpha] = 0$, $\Ad(\exp(tU)) = \Ad(\exp(tQ_\alpha)\exp(tU')) = R_{t\alpha}\Ad(\exp(tU'))$, where $R_{t\alpha} = \exp(tQ_\alpha)$ is the rotation matrix in each paired level of the double chain. Then we may proceed as above, considering two chains for $U'$ independently, pulling $R_{t\alpha}$ out of the sum to compute. This gives the result.

\end{proof}

We can understand the rates of divergence using the following fact about polynomials:

\begin{lemma}
	\label{lem:polynomial-coefficients}
	Let $p(t) = \sum_{k=0}^d a_kt^k$ be a polynomial of degree $d$. There exists $C(d)$ such that if $\abs{p(t)} < \epsilon$ for all $t \in [0,T]$, then $\abs{a_k} < C(d)T^{-k}\epsilon$ for all $k = 0,\dots,d$. Conversely, if $\abs{a_k} < C(d)^{-1}T^{-k}\epsilon$ for all $k$, then $\abs{p(t)} < \epsilon$ for all $t \in [0,T]$.
\end{lemma}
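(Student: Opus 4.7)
The plan is to prove both directions by rescaling time to reduce everything to a single interval $[0,1]$, at which point the statement becomes a routine instance of norm equivalence on the finite-dimensional vector space $\mc P_d$ of polynomials of degree at most $d$.

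For the forward direction, I would introduce the substitution $s = t/T$ and set $q(s) = p(Ts) = \sum_{k=0}^d (a_k T^k) s^k$. The hypothesis $|p(t)| < \epsilon$ for $t \in [0,T]$ becomes $\|q\|_{\infty,[0,1]} < \epsilon$. Now, on the finite-dimensional space $\mc P_d$, the sup norm on $[0,1]$ and the norm given by the maximum of the absolute values of coefficients are equivalent, so there is a constant $C(d)$ (depending only on $d$) such that each coefficient $a_k T^k$ of $q$ satisfies $|a_k T^k| \le C(d)\,\|q\|_{\infty,[0,1]} < C(d)\epsilon$. Dividing by $T^k$ gives the desired bound $|a_k| < C(d) T^{-k}\epsilon$. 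If I wanted an explicit constant, I could obtain it via Lagrange interpolation through the points $0, 1/d, 2/d, \dots, 1$, expressing each $a_kT^k$ as a fixed linear combination of the values $q(i/d)$.

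The converse direction is essentially immediate and just requires enlarging $C(d)$ if necessary. Assuming $|a_k| < C(d)^{-1} T^{-k}\epsilon$ for all $0 \le k \le d$, for any $t \in [0,T]$ I estimate
\begin{equation*}
|p(t)| \le \sum_{k=0}^d |a_k|\, t^k \le \sum_{k=0}^d C(d)^{-1}T^{-k}\epsilon \cdot T^k = (d+1)C(d)^{-1}\epsilon,
\end{equation*}
which is less than $\epsilon$ provided $C(d) \ge d+1$. So after possibly replacing $C(d)$ by $\max(C(d), d+1)$, both implications hold with the same constant.

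There is essentially no obstacle here; the only non-trivial content is the well-known fact that on any finite-dimensional normed space all norms are equivalent, applied to the space $\mc P_d$ equipped with the sup norm on $[0,1]$ versus the coefficient sup norm. The rescaling step is what converts this abstract equivalence into the scale-sensitive bounds $T^{-k}\epsilon$ that the rest of the paper needs when estimating the sizes of coefficients of the polynomials $a_j^i(t)$, $f_j^i(t)$, $g_j^i(t)$ appearing in Lemma \ref{lem:polynomial-growth}.
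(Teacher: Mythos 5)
Your proposal is correct and follows essentially the same route as the paper: rescale $t = Ts$ so the coefficients become $a_kT^k$, then invoke equivalence of the sup norm on $[0,1]$ and the max-coefficient norm on the finite-dimensional space $\mc P_d$. The only cosmetic difference is that the paper also gets the converse from the norm equivalence (the inequality $\norm{\cdot}_{\infty,[0,1]} \le C(d)\,\norm{\cdot}_{\mathrm{coeff}}$), whereas you use a direct triangle-inequality bound and enlarge $C(d)$ to at least $d+1$, which is equally valid.
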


\begin{proof}
	Let $\mc P_d$ denote the space of polynomials of degree at most $d$. Then $\mc P_d$ carries a family of norms $\norm{p}_T = \sup_{t \in [0,T]} \abs{p(t)}$, as well as the norm $\norm{p}_\infty = \max_k \abs{a_k}$, where $p(t) = \sum a_kt^k$. Since $\mc P_d \cong \R^d$, and all norms on $\R^d$ are equivalent, we conclude that there exists $C(d)$ such that $C(d)^{-1} \norm{p}_\infty \le \norm{p}_1 \le C(d) \norm{p}_\infty$.
	
	Now we compute:
	
	\begin{eqnarray*}
	\sup_{0 \le t \le T} \abs{\sum_{k=0}^d a_kt^k} & = & \sup_{0 \le t \le 1} \abs{\sum_{k=0}^d (a_kT^k)t^k}\\
	& \ge & C(d)^{-1} \max_k \abs{a_kT^k} \\
	\abs{a_k} & \le & C(d)T^{-k}\norm{p}_T
	\end{eqnarray*}
	
	The reverse computation follows similarly.
\end{proof}

\begin{corollary}
    \label{cor:twisted-coefficients}
	Let $p(t) = \cos(\alpha t)f(t) - \sin(\alpha t)g(t)$ and $q(t) = \sin(\alpha t) f(t) + \cos(\alpha t) g(t)$ with $f,g$ polynomials of degree $d$ and coefficients $f_k$ and $g_k$, respectively. Then there exists $E(d)$ such that if $p(t)^2 + q(t)^2 < \epsilon^2$ for all $t \in [0,T]$, then $\abs{f_k},\abs{g_k} < E(d) T^{-k}\epsilon$ for all $k = 0,\dots,d$. Conversely if $\abs{f_k},\abs{g_k} < E(d)^{-1} T^{-k}\epsilon$ for all $k$, then $p(t)^2 + q(t)^2 < \epsilon^2$ for all $t \in [0,T]$.
\end{corollary}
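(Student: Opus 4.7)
The plan is to reduce Corollary \ref{cor:twisted-coefficients} to Lemma \ref{lem:polynomial-coefficients} by exploiting the fact that the pair $(p(t), q(t))$ is obtained from $(f(t), g(t))$ by applying the rotation matrix $\begin{pmatrix} \cos(\alpha t) & -\sin(\alpha t) \\ \sin(\alpha t) & \cos(\alpha t) \end{pmatrix}$, which is orthogonal. The first step is to compute directly
\[ p(t)^2 + q(t)^2 = (\cos(\alpha t)f(t) - \sin(\alpha t)g(t))^2 + (\sin(\alpha t)f(t) + \cos(\alpha t)g(t))^2 = f(t)^2 + g(t)^2, \]
where the cross terms cancel and the $\sin^2 + \cos^2 = 1$ identity collapses the rest. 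This identity is the whole content of the reduction, so there is essentially no obstacle.

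For the forward direction, I would assume $p(t)^2 + q(t)^2 < \epsilon^2$ on $[0,T]$. By the identity, this gives $f(t)^2 + g(t)^2 < \epsilon^2$, which forces $|f(t)| < \epsilon$ and $|g(t)| < \epsilon$ separately on $[0,T]$. Applying Lemma \ref{lem:polynomial-coefficients} to each polynomial yields $|f_k|, |g_k| < C(d) T^{-k} \epsilon$, so $E(d) := C(d)$ works for this direction.

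For the converse, I would assume $|f_k|, |g_k| < E(d)^{-1} T^{-k} \epsilon$ for all $k$, with $E(d)$ to be chosen. By the converse half of Lemma \ref{lem:polynomial-coefficients}, choosing $E(d) = \sqrt{2}\, C(d)$ guarantees $|f(t)|, |g(t)| < \epsilon / \sqrt{2}$ for all $t \in [0,T]$, hence $f(t)^2 + g(t)^2 < \epsilon^2$, and the rotation identity gives $p(t)^2 + q(t)^2 < \epsilon^2$. Taking $E(d) = \sqrt{2}\, C(d)$ as the final constant satisfies both directions simultaneously. The only mild subtlety is reconciling the constants in the two directions into a single $E(d)$, but this is entirely mechanical.
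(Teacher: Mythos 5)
Your proof is correct. Both arguments hinge on the same observation, the rotation identity $p(t)^2+q(t)^2=f(t)^2+g(t)^2$, but after that point you take a slightly different and more economical route: you reduce the statement to two applications of Lemma \ref{lem:polynomial-coefficients}, once for $f$ and once for $g$, absorbing the componentwise bounds into a single constant $E(d)=\sqrt{2}\,C(d)$ (the forward direction only needs $C(d)\le E(d)$, and the converse works because $|f(t)|,|g(t)|<\epsilon/\sqrt{2}$ forces $f(t)^2+g(t)^2<\epsilon^2$). The paper instead replays the proof of Lemma \ref{lem:polynomial-coefficients} on the space $\mc Q_d$ of pairs $(p,q)$, parametrized by the coefficients of $f$ and $g$: it introduces the sup norm of $\sqrt{p^2+q^2}$ on $[0,T]$ and the max-coefficient norm, rescales $t\mapsto t/T$, and invokes equivalence of norms on a finite-dimensional space to get a single constant $E(d)$ directly. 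What the paper's version buys is a bound phrased in terms of the joint quantity $\sqrt{f(t)^2+g(t)^2}$ without splitting into components; what your version buys is that you never have to repeat the finite-dimensional norm-equivalence argument, at the harmless cost of a factor $\sqrt{2}$ in the constant. Either way the conclusion and its use later (in Proposition \ref{prop:Bowen-ball}) are unaffected.
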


\begin{proof}
The proof is almost identical to that of Lemma \ref{lem:polynomial-coefficients}. We let $\mc Q_d$ denote the space of pairs of functions $p(t)$ and $q(t)$ as defined above, which is isomorphic to $\R^{2d}$, where the coordinates are given by the coefficients of $f$ and $g$. We can again put a norm $\norm{\cdot}_\infty$ corresponding to the maximum coefficients and the max norm on the interval $[0,T]$, $\norm{\cdot}_T$ for the function $\sqrt{p(t)^2 + q(t)^2} = \sqrt{f(t)^2+g(t)^2}$. Then if $E(d)$ is the constant which makes the norms equivalent:

\begin{eqnarray*}
        \sup_{0 \le t \le T} \sqrt{\left(\sum_{k=0}^d f_kt^k\right)^2 + \left(\sum_{k=0}^d g_kt^k\right)^2} & = & \sup_{0 \le t \le 1} \sqrt{\left(\sum_{k=0}^d (f_kT^k)t^k\right)^2 + \left(\sum_{k=0}^d (g_kT^k)t^k\right)^2}\\
	& \ge & E(d)^{-1} \max  \set{\abs{f_kT^k},\abs{g_kT^k}} \\
	\abs{f_k},\abs{g_k} & \le & E(d)T^{-k}\norm{(p,q)}_T
\end{eqnarray*}

Again, the inequalities are easily reversible to obtain the converse.
\end{proof}

Introduce the following norms on $\mf g$: $\norm{\cdot}$, the usual Reimannian norm,  and $$\norm{\sum c_j^iX_j^i + \sum c_{j,0}^iX_{j,0}^i + c_{j,1}^i X_{j,1}^i}_\infty = \max \set{\abs{c_j^i},\sqrt{(c_{j,0}^i)^2 + (c_{j,1}^i)^2}}.$$ By definition of $\geomexp$, $d_G(e,\geomexp(X)) = \norm{X}$.

\begin{lemma}
	\label{lem:bilipischitz}
	If $\epsilon < \inj(\varphi^t(\bar{x}))$ for all $t \in [a,b]$ with $a \le 0 \le b$ and $d(\varphi^t(\bar{x}),\varphi^t(\bar{y})) < \epsilon$, there exists $C(\epsilon) > 0$ such that:
	
	\[ (1-C\norm{X_t})\norm{X_t} \le d(\varphi^t(\bar{x}),\varphi^t(\bar{y})) \le (1+C\norm{X_t}) \norm{X_t} \mbox{ for all } t \in [a,b]\]
\end{lemma}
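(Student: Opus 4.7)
The plan is to reduce the statement to a comparison between the algebraic exponential $\algexp$ and the Riemannian exponential $\geomexp$ in a fixed neighborhood of $0 \in \mf g$, and then invoke Taylor expansion. Left-invariance of the metric keeps all the estimates uniform in the base point.

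First I would write $\bar y = \bar x \algexp(X)$ so that, using Lemma \ref{lem:adjoint} and the definition \eqref{eq:define-Xt} of $X_t$,
\[
\varphi^t(\bar y) \;=\; \bar x \algexp(X)\algexp(tU) \;=\; \varphi^t(\bar x)\,\algexp\bigl(\alglog(\algexp(-tU)\algexp(X)\algexp(tU))\bigr)\;=\;\varphi^t(\bar x)\algexp(X_t).
\]
By hypothesis, $d(\varphi^t(\bar x),\varphi^t(\bar y)) < \epsilon < \inj(\varphi^t(\bar x))$ for every $t \in [a,b]$, so the displacement $\algexp(X_t)$ lies within the injective chart at $\varphi^t(\bar x)$. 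Left-invariance of $d_G$ then gives $d(\varphi^t(\bar x),\varphi^t(\bar y)) = d_G(e,\algexp(X_t))$, which reduces the problem to comparing $d_G(e,\algexp(X_t))$ with $\norm{X_t}$.

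Next I would compare the two exponential maps. Both $\algexp$ and $\geomexp$ are smooth diffeomorphisms from a neighborhood of $0 \in \mf g$ onto a neighborhood of $e \in G$, and both satisfy $d_0 \algexp = d_0 \geomexp = \id$. Hence there is a smooth local diffeomorphism $\Phi : B(0,\epsilon_1) \to \mf g$ with $\Phi(0) = 0$, $d_0\Phi = \id$, and $\algexp = \geomexp \circ \Phi$ on $B(0,\epsilon_1)$. Taylor's theorem produces $C = C(\epsilon_1) > 0$ with
\[
\norm{\Phi(X) - X} \le C\,\norm{X}^2 \quad \text{for all } X \in B(0,\epsilon_1).
\]
Shrinking $\epsilon$ if necessary so that $X_t \in B(0,\epsilon_1)$ (which is permitted since $X_t \in B(0,\epsilon)$ in the chart at $\varphi^t(\bar x)$), I would then use the defining property $d_G(e,\geomexp(Y)) = \norm{Y}$ of the Riemannian exponential on its injectivity ball to write
\[
d_G(e,\algexp(X_t)) \;=\; d_G(e,\geomexp(\Phi(X_t))) \;=\; \norm{\Phi(X_t)}.
\]
The reverse triangle inequality applied to $\Phi(X_t) = X_t + (\Phi(X_t)-X_t)$ yields
\[
(1 - C\norm{X_t})\,\norm{X_t} \;\le\; \norm{\Phi(X_t)} \;\le\; (1 + C\norm{X_t})\,\norm{X_t},
\]
which is exactly the claimed estimate.

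The only mildly subtle point is making sure that the constant $C$ is genuinely independent of $\bar x$, $t$, and $T$. This is where left-invariance is essential: because the Riemannian metric, the chart $\Phi$, and the map $\algexp$ are all left-invariant (or defined intrinsically from the Lie algebra structure), the Taylor bound produced at the identity transports without loss to every basepoint $\varphi^t(\bar x)$. Thus the same constant $C(\epsilon)$ works uniformly, provided $\epsilon$ is smaller than the radius on which $\Phi$ is defined and where the relevant second derivatives of $\Phi$ are bounded.
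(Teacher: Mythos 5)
Your proposal is correct and follows essentially the same route as the paper: the paper likewise reduces, via left-invariance and the injectivity hypothesis, to $d(\varphi^t(\bar{x}),\varphi^t(\bar{y})) = \norm{\geomlog(\algexp(X_t))}$ and then applies the second-order Taylor expansion of $F = \geomlog \of \algexp$ (your $\Phi$) at $0$, where $d_0F = \id$, to get $\norm{F(X)-X} \le C\norm{X}^2$. The only cosmetic difference is that you make explicit the triangle-inequality step and the left-invariance argument for uniformity of $C$ in the basepoint, which the paper leaves implicit.
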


\begin{proof}
	We begin with a few reductions which we have laid out in previous sections:
	
	\[ \begin{array}{rcl}
	d(\varphi^t(\bar{x}),\varphi^t(\bar{y})) & = & d_G(e,\algexp(-tU)g\algexp(tU)) \\
	& = &\abs{\geomlog(\algexp(-tU)\algexp(X)\algexp(tU))} \\
	& = & \abs{\geomlog(\algexp(X_t))}
	\end{array}\]
	
	Now if $\mc U$ is a sufficiently small neighborhood of $0 \in \mf g$, $F = \geomlog \of \algexp : \mc U \to \mf g$ is a $C^\infty$ map such that $d_0F = \id$. In particular, by considering the second-order Taylor expansion for $F$, we know $F(X) = X + f(X)$, where $\limsup_{X \to 0} \norm{f(X)}/\norm{X}^2 = C' < \infty$. Hence by choosing $\epsilon$ sufficiently small and any $C > C'$, we get that $\norm{f(X)} < C\norm{X}^2$. This is exactly the desired result.
\end{proof}

\begin{proposition}
	\label{prop:Bowen-ball}
	Let $\varphi_t$ be a quasi-unipotent flow, and $\bar{x} \in \Gamma \backslash G$. There exists $\epsilon_0 > 0$ such that if $\epsilon < \min\set{\epsilon_0,\inj(\varphi^t(\bar{x}))}$ for all $t \in [0,T]$, there exists $C_0(\epsilon,\varphi) > 0$, ${C_0}^{-1}T^{-R} \le \mu(B_\varphi^T(\bar{x},\epsilon)) \le C_0 T^{-R}$, where $R$ is as in Theorem \ref{thm:main}. Furthermore, the lower bound holds without the assumption on $\inj(\varphi^t(\bar{x}))$.
\end{proposition}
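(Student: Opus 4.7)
The plan is to parametrize a small neighborhood of $\bar{x}$ in $\Gamma \backslash G$ by the Lie algebra via the chart $X \mapsto \bar{x}\algexp(X)$ and translate the Bowen condition into quantitative bounds on the coefficients of $X$ in the chain basis. By \eqref{eq:homo-divergence} together with Lemma \ref{lem:bilipischitz}, as long as the orbit stays in the $\epsilon$-good region (the standing hypothesis for the upper bound), the condition $d(\varphi^t(\bar{x}),\varphi^t(\bar{y})) < \epsilon$ on $[0,T]$ is equivalent, up to universal constants absorbed into the choice of $\epsilon_0$, to the condition $\norm{X_t} < \epsilon'$ on $[0,T]$ with $\epsilon' \asymp \epsilon$, where $X_t$ is as in \eqref{eq:define-Xt}. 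Under the injectivity hypothesis, the Haar measure of the Bowen ball then agrees, up to constants, with the Lebesgue measure of
\[ V_T = \set{X \in \mf g : \norm{X_t} < \epsilon' \mbox{ for all } t \in [0,T]},\]
so it suffices to estimate the Lebesgue volume of $V_T$ from above and below.

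For this I expand $X = \sum a_j^i(0) X_j^i + \sum (b_j^i(0) X_{j,0}^i + c_j^i(0) X_{j,1}^i)$ in the chain basis and apply Lemma \ref{lem:polynomial-growth}. For each single chain of depth $m_i$, the coordinate $a_0^i(t)$ of $X_t$ is a polynomial of degree $m_i$ whose coefficient of $t^k$ is proportional (by a universal constant) to $a_k^i(0)$. The norm bound $\norm{X_t} < \epsilon'$ implies, via equivalence of norms on finite-dimensional $\mf g$, that $\abs{a_0^i(t)} < C\epsilon'$ on $[0,T]$, and Lemma \ref{lem:polynomial-coefficients} then forces $\abs{a_k^i(0)} \leq C'T^{-k}\epsilon$ for each $k$. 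For a double chain of depth $m_i$, the analogous statement for the pair $(b_0^i(t), c_0^i(t))$ combined with Corollary \ref{cor:twisted-coefficients} yields $\abs{b_k^i(0)}, \abs{c_k^i(0)} \leq C'T^{-k}\epsilon$. Taking the product of these coordinate-wise bounds over all chains (with double chains contributing two coordinates at each level), the Lebesgue volume of the containing box is
\[ \prod_{i=1}^n \prod_{k=0}^{m_i}(C'T^{-k}\epsilon) \asymp \epsilon^{\dim G}\, T^{-\sum_i m_i(m_i+1)/2} = \epsilon^{\dim G}\, T^{-R}, \]
where double chains appear twice in the product in accordance with Definition \ref{def:chain-basis}. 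This establishes the upper bound.

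For the lower bound I reverse the procedure using the converse directions of Lemma \ref{lem:polynomial-coefficients} and Corollary \ref{cor:twisted-coefficients}: choosing initial data satisfying $\abs{a_k^i(0)} < cT^{-k}\epsilon$ (and analogously for double chains) with $c$ sufficiently small, a direct summation using the explicit identity $a_j^i(t) = \sum_{k \ge j} \tfrac{t^{k-j}}{(k-j)!} a_k^i(0)$ derived inside the proof of Lemma \ref{lem:polynomial-growth} gives $\abs{a_j^i(t)} \leq C\epsilon T^{-j}$ throughout $[0,T]$, so $\norm{X_t} < \epsilon/2$ uniformly. The resulting set in $\mf g$ has Lebesgue volume $\gtrsim \epsilon^{\dim G} T^{-R}$ and lies in a ball of radius $\lesssim \epsilon$ about $0$; by discreteness of $\Gamma$, its image under $X \mapsto \bar{x}\algexp(X)$ projects injectively to $\Gamma \backslash G$ as soon as $\epsilon$ is smaller than the injectivity radius at $\bar{x}$ alone, which is a condition on the single point rather than along the whole orbit, yielding the lower bound without the orbit-wide injectivity assumption. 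The principal technical obstacle is keeping track of the passage between the Riemannian norm $\norm{\cdot}$, the basis-adapted sup norm $\norm{\cdot}_\infty = \max\set{\abs{a_j^i}, \sqrt{(b_j^i)^2+(c_j^i)^2}}$ used to invoke the polynomial lemmas, and the coefficient norm on initial data---in particular, verifying that the bound on the single coordinate $a_0^i(t)$ obtained from norm equivalence propagates, through the derivative relation $a_j^i(t)=(a_0^i)^{(j)}(t)$ implicit in Lemma \ref{lem:polynomial-growth}, to uniform control of all coordinates of $X_t$ at the endpoints of the interval.
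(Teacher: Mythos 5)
Your proposal is correct and follows essentially the same route as the paper's proof: pass to the universal cover via left-invariance and Lemma \ref{lem:bilipischitz}, expand $X_t$ in the chain basis, use Lemma \ref{lem:polynomial-growth} to identify the coefficients of $a_0^i(t)$ (and of the pair $(b_0^i,c_0^i)$) with the initial data, invoke Lemma \ref{lem:polynomial-coefficients} and Corollary \ref{cor:twisted-coefficients} to trap $X_0$ in a box of side lengths comparable to $T^{-k}\epsilon$, and convert box volume to Haar measure by bounding the Jacobian of the exponential. Your treatment of the lower bound is in fact a slightly more careful rendering of the paper's remark that ``all inequalities can be reversed'': you explicitly control every $a_j^i(t)$ via the summation formula and observe that only injectivity at the base point $\bar{x}$ (not along the whole orbit) is needed, which is precisely the form in which the lower bound is later applied.
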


\begin{proof}
	
Without loss of generality, we assume $\bar{x} = e$. Let $C$ be the constant of Lemma \ref{lem:bilipischitz}. If we choose $\epsilon$ sufficiently small, we can ensure $C\norm{X} < 1/2$ whenever $\algexp(X) \in B(e,\epsilon)$. In particular, we get that if $\bar{y} \in B_\varphi^T(e,\epsilon)$, then $\norm{X_t} < 2\epsilon$. Now since $\norm{\cdot}_\infty$ and $\norm{\cdot}$ are equivalent, we know that there exists $C' > 0$ (independent of $T$ and $\epsilon$) such that $\norm{X_t}_\infty < 2C'\epsilon$. This means that $\abs{a_j^i(t)} < 2C'\epsilon$ for pair $i,j$ corresponding to chains and $\abs{b_j^i(t)^2 + c_j^i(t)^2} < \epsilon$ for every pair $i,j$ correspndong to a double chain. Consider the polynomials $a_0^i(t)$. By Lemma \ref{lem:polynomial-growth}, $a_0^i(t) = \sum_{k=0}^{m_i} t^k a_k^i(0)/k!$. Since this polynomial is uniformly less than $2C'\epsilon$, we get that $\abs{a_k^i(0)/k!} < 2C'C(d)T^{-k}\epsilon$. Similarly, for the twisted chains, we get that $\abs{b_k^i(0)/k!}, \abs{c_k^i(0)/k!} < 2C'E(d)T^{-k}\epsilon$. In particular, $X_0$ is contained in the product of the intervals of length $4C'C(m)T^{-k}\epsilon$ on the coordinate $X_k^i$, where $m$ is the maximum of the $m_i$, and the corresponding balls in double cahins. Since for a fixed $i$, these run from $k=0,\dots,m_i$, we conclude that the total volume of the hypercube in $\mf g$ has measure $(4C'C(m)\epsilon)^{\sum m_i}T^{-R}$. This is contained in the fixed compact set $\geomlog(B(e,4C'C(m)\epsilon))$. We can thus bound the Jacobian of $\algexp$ by a constant $M$. In particular:
	
	\[ \mu(B_\varphi^T(e,\epsilon)) \le M(4C'C(m)\epsilon)^{\sum m_i}T^{-R} \]
	
	To get the opposite inequality, one observes that all of the inequalities can be reversed, so that $\alglog(B_\varphi^T(e,\epsilon))$ contains a product of intervals and balls with length and $4(C')^{-1}\max\set{C(m),E(m)}^{-1}T^{-k}\epsilon$. Then again, since Haar in the group is distorted by the Jacobian of $\alglog$, which is bounded near the identity, we get the opposite inequality.
	
	\end{proof}

\begin{proof}[Proof of Theorem \ref{thm:main} - Topological Case, Cocompact $\Gamma$]
	Since we assume $\Gamma$ is cocompact, we $\inj(\Gamma \backslash G) > 0$, and for sufficiently small $\epsilon > 0$ we may apply Proposition \ref{prop:Bowen-ball} for all $\bar{x}$ and $T$. We first show the lower bound. Then we require $C_0^{-1}T^R$ Bowen balls to cover $\Gamma \backslash G$ by Proposition \ref{prop:Bowen-ball}. If one chooses a rate $a(T) = T^{R - \tau}$ for some $\tau > 0$, then the first limit of  \eqref{eq:topo-limit} becomes bounded below by $C_0^{-1}T^{\tau}$ as $T \to \infty$. For $\tau > 0$, this is $\infty$, so the polynomial topological slow entropy is at least $T^R$
	
	For the upper bound, we instead consider the second limit in equation \eqref{eq:topo-limit}. If $\set{x_i}$ is $\epsilon$-separated, then the sets $B_\varphi^T(x_i,\epsilon/2)$ are all disjoint. Since $\mu(B_\varphi^T(\epsilon/2,\bar{x})) \ge {C_0}^{-1} T^{-R}$, we know that we cannot fit more that $C_0T^R$ such disjoint balls. Now if one chooses rate $T^{R+\tau}$, one sees that the second limit of equation \eqref{eq:topo-limit}is $0$. In particular, the polynomial topological slow entropy is at most $T^R$.
\end{proof}

\section{The Slow Variational Principle}\label{sec:SlowVar}

In this section we will prove some relationships between the metric and topological slow entropies of topological dynamical systems.

\subsection{The Variational Principal of Quasi-Unipotent Flows}
Thoughout this section we assume that $\Gamma \backslash G$ is compact. See Section \ref{sec:noncompact} for a treatment of flows on noncompact homogeneous spaces.\footnote{The arguments for the noncompact spaces also apply to the compact ones, but the argument for the compact spaces is significantly simpler and the proof in the noncompact case uses similar ideas, so we include both.} Say that a metric space $X$ is {\it well-partitionable} if for any Borel probability measure $\mu$, compact set $K$ and $\epsilon > 0$, there exists a finite partition $\mc P$ of $K$ whose atoms have diameter less than $\epsilon$ and such that $\mu\left(\bigcup_{\xi \in \mc P} \partial_\epsilon\xi\right) < \epsilon$, where \[\partial_\epsilon\xi = \set{ y \in X : B(y,\epsilon) \cap \xi \not= \emptyset \mbox{ but } B(y,\epsilon) \not\subset \xi}.\] Note that any smooth manifold is well-partitionable. We begin by recalling a corollary of Proposition 2 from \cite{Kat-Tho}\footnote{In \cite{Kat-Tho} the authors consider compact space $X$ but their proof of Proposition 2 generalizes easily to the case where $X$ is  well-partitionable.}:

\begin{theorem}[Slow Goodwyn's Theorem]\label{thm:Goodwyn}
Suppose that $f$ is a continuous, invertible flow or transformation on a well-partitionable $X$. Then for any invariant measure $\mu$ and family of scales $a_\chi$:

  \[ h_{\mu,a_\chi}(f) \le h_{\operatorname{top},a_\chi}(f) \]
\end{theorem}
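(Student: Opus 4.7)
The plan is to follow the classical Goodwyn-type argument from \cite{Kat-Tho}, adapted to the slow entropy setting. Since $h_{\mu,a_\chi}(f) = \sup_{\mc P} h_{\mu,a_\chi}(f,\mc P)$ by definition, it suffices to show $h_{\mu,a_\chi}(f,\mc P) \leq h_{\operatorname{top},a_\chi}(f)$ for every finite partition $\mc P$. The overall idea is to build, from any such $\mc P$, a comparison between the Hamming covering count $S(\mc P,T,\epsilon)$ and the Bowen covering count $N_{\varphi,K}(\delta,T)$ for an appropriate compact set $K$ and a slightly smaller scale $\delta$.

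First, using well-partitionability, I would fix $\eta > 0$ small and replace $\mc P$ by a refinement (or approximation) $\mc Q$ whose atoms have diameter less than $\delta$ and satisfy $\mu(\partial_\delta \mc Q) < \eta^2$. Since refining the partition only increases $h_{\mu,a_\chi}(f,\cdot)$, controlling $h_{\mu,a_\chi}(f,\mc Q)$ suffices. Next, using Lemma \ref{lem:CompactApprox}, pick a compact set $K \subset X$ with $\mu(K) > 1 - \eta$, and let $K'$ be the set of points whose orbit up to time $T$ spends a fraction less than $\eta$ of the time inside $\partial_\delta \mc Q$. By the maximal ergodic inequality applied to the characteristic function of $\partial_\delta \mc Q$, one has $\mu(K') > 1 - \eta$ for $T$ sufficiently large (up to adjusting constants). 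Set $K'' = K \cap K'$; then $\mu(K'') > 1 - 2\eta$.

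The key observation is that if $x \in K''$ and $y \in B_\varphi^T(x,\delta)$, then for every time $t \in [0,T]$ at which $\varphi^t(x) \notin \partial_\delta \mc Q$, the point $\varphi^t(y)$ lies within $\delta$ of $\varphi^t(x)$, hence in the same atom of $\mc Q$. Thus the codings of $x$ and $y$ agree on a set of $t$-measure at least $(1-\eta)T$, giving $\bar d^T_{\varphi,\mc Q}(x,y) < \eta$. In other words, each Bowen ball $B_\varphi^T(x,\delta)$ centered in $K''$ is contained in a Hamming ball $B_{\varphi,\mc Q}^T(x,\eta)$. Covering $K''$ by the minimum number $N_{\varphi,K''}(\delta,T) \leq N_{\varphi,K}(\delta,T)$ of Bowen balls (centers possibly adjusted to lie in $K''$, at the cost of doubling the radius) therefore produces a family of at most this many Hamming balls whose union has measure at least $1 - 2\eta > 1 - \eta'$ for a chosen $\eta' > 2\eta$. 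Consequently $S(\mc Q, T, \eta') \leq N_{\varphi,K}(2\delta,T)$ for all large $T$.

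Dividing by $a_\chi(T)$ and taking $\limsup$ shows $A(\varphi,\mc Q,\eta',a_\chi) \leq \delta_{\varphi,K,a_\chi}^N$ for the same scale $\chi$. Hence any $\chi$ for which the left-hand side is positive is also a $\chi$ for which the right-hand side is positive, so $\sup\{\chi : A(\varphi,\mc Q,\eta',a_\chi) > 0\} \leq \sup\{\chi : \delta_{\varphi,K,a_\chi}^N > 0\}$. Letting $\eta,\eta' \to 0$ (and hence $\delta \to 0$) and taking $\sup_K$ gives $h_{\mu,a_\chi}(f,\mc Q) \leq h_{\operatorname{top},a_\chi}(f)$, which is what we wanted. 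The main obstacle I foresee is the boundary control: the passage from ``Bowen-close'' to ``same coding'' only works at times when the orbit avoids $\partial_\delta \mc Q$, so the maximal/Birkhoff argument on $\partial_\delta \mc Q$ must be made uniform enough (over $T$) to guarantee that most points in $K$ have small time-average in the boundary; the well-partitionability hypothesis is exactly what makes this control possible in noncompact $X$.
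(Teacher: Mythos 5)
The engine of your argument is the right one, and it is exactly the mechanism behind the result the paper invokes (the paper gives no proof of its own: it quotes Proposition 2 of \cite{Kat-Tho}, with a footnote that compactness can be replaced by well-partitionability). Your maximal-ergodic control of the time an orbit spends in the thickened boundary $\partial_\delta\mc Q$, the observation that a $(\delta,T)$-Bowen ball centred at such a good point is contained in an $\eta$-Hamming ball for $\mc Q$, and the resulting count $S(\mc Q,T,\eta')\le N_{\varphi,K}(2\delta,T)$, with the comparison of $\limsup$'s afterwards, are all correct (modulo a harmless bookkeeping point: once you move the Bowen centres into $K''$ you double the radius, so the boundary control should be run at scale $2\delta$, or you should start from $\delta/2$-Bowen balls).

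The genuine gap is the first reduction. You propose to ``replace $\mc P$ by a refinement $\mc Q$ whose atoms have diameter less than $\delta$ and satisfy $\mu(\partial_\delta\mc Q)<\eta^2$'', justified by the (true) monotonicity of $h_{\mu,a_\chi}(f,\cdot)$ under refinement. Such a refinement need not exist: if $\mc Q$ refines $\mc P$ then $\partial_\delta\mc P\subseteq\partial_\delta\mc Q$ (a $\delta$-ball meeting $P_i$ and its complement meets some atom $Q_j\subseteq P_i$ without being contained in it), and there are finite measurable partitions, e.g.\ $\set{P_1,P_2}$ with $P_1$ open, dense, of measure $1/2$ in the circle, for which $\mu(\partial_\delta\mc P)\ge 1/2$ for every $\delta>0$. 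Well-partitionability manufactures small-boundary partitions from scratch; it does not let you refine a prescribed bad partition into one. The parenthetical ``(or approximation)'' is the correct route, but then monotonicity no longer applies and the missing ingredient is precisely the Katok--Thouvenot partition-perturbation step: choose a small-boundary $\mc Q$ with $\sum_i\mu(P_i\triangle Q_i)$ small, run a second Birkhoff/maximal argument on $\bigcup_i (P_i\triangle Q_i)$ to show that, off a set of small measure, Bowen balls land inside $\mc P$-Hamming balls of radius $\epsilon$ (via the $\mc Q$-coding), and deduce $S(\mc P,T,\epsilon)\le N_{\varphi,K}(\delta,T)$ for the \emph{original} $\mc P$. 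This also repairs a quantifier problem at the end of your write-up: since your auxiliary partition $\mc Q$ changes as $\eta,\eta',\delta\to 0$, you never bound $h_{\mu,a_\chi}(f,\mc Q)=\lim_{\epsilon\to 0}(\cdots)$ for a fixed partition; in the approximation scheme one fixes $\mc P$ and $\epsilon$, picks $\mc Q$ depending on both, concludes $\sup\set{\chi: A(\varphi,\mc P,\epsilon,a_\chi)>0}\le h_{\operatorname{top},a_\chi}(f)$, and only then lets $\epsilon\to 0$ and takes the supremum over $\mc P$.
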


We now compute the metric slow entropy for the flows considered in the previous sections. We first prove several lemmas. 
We will use the following classical result on polynomials, which is a form of the Chebyshev inequality, see for instance \cite{Brudnyi}.

\begin{lemma}[Brudnyi-Ganzburg inequality]
\label{lem:brudnyi}
Let $V \subset \R$ be an interval, and $\omega \subset V$ a measurable subset. Then for any polynomial $p$ of degree at most $k$:

\[ \sup_V \abs{p} \le \left(\dfrac{4\abs{V}}{\abs{\omega}}\right)^k \sup_\omega \abs{p}\]
\end{lemma}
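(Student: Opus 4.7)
The plan is to reduce to the classical Remez inequality, which is the one-dimensional form of the Brudnyi-Ganzburg statement, and then bound the relevant Chebyshev polynomial explicitly. Since both sides of the desired inequality are invariant under affine reparametrization of $\R$ (which carries polynomials of degree $\le k$ to polynomials of degree $\le k$ and multiplies $\abs{V}$ and $\abs{\omega}$ by the same factor), the first step is to normalize so that $V = [-1,1]$. After this reduction the goal becomes: for any polynomial $p$ of degree $\le k$ and any measurable $\omega \subset [-1,1]$,
\[
\sup_{[-1,1]} \abs{p} \le \left(\dfrac{8}{\abs{\omega}}\right)^k \sup_\omega \abs{p}.
\]

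The second step is the heart of the argument and is where I expect the main obstacle to lie: establishing the sharp Remez bound
\[
\sup_{[-1,1]} \abs{p} \le T_k\!\left(\dfrac{2+s}{2-s}\right)\sup_\omega \abs{p}, \qquad s = 2-\abs{\omega},
\]
where $T_k$ is the Chebyshev polynomial of degree $k$. The standard route is a normal-families/extremal argument: among all polynomials of degree $\le k$ that are bounded by $1$ on some measurable set $\omega \subset [-1,1]$ of fixed measure, one maximizes $\sup_{[-1,1]}\abs{p}$. A compactness plus equioscillation argument (as in the proof of the classical Chebyshev alternation theorem) forces the extremizer to be, up to rescaling, a Chebyshev polynomial composed with an affine map that sends $[-1,1]$ to the worst-case configuration, which by a rearrangement argument can be taken to be an interval of length $\abs{\omega}$ at one endpoint. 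Carrying out this extremization is the most delicate part; once it is done, the value of the extremum is exactly $T_k\!\left(\tfrac{2+s}{2-s}\right)$.

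The third step is a clean bound on Chebyshev polynomials outside their natural interval. Using the explicit formula $T_k(x) = \cosh(k\,\mathrm{arccosh}\,x)$ for $x \ge 1$, one has $T_k(x) \le (x + \sqrt{x^2-1})^k \le (2x)^k$ for $x \ge 1$. Applied to $x = (2+s)/(2-s)$ with $2-s = \abs{\omega}$, this gives
\[
T_k\!\left(\dfrac{2+s}{2-s}\right) \le \left(\dfrac{2(2+s)}{2-s}\right)^k \le \left(\dfrac{8}{\abs{\omega}}\right)^k,
\]
using $2+s \le 4$. Combining this with the Remez step and undoing the affine normalization yields the stated bound $(4\abs{V}/\abs{\omega})^k$.

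The only real difficulty is Remez's inequality itself; the affine normalization and the Chebyshev estimate are routine. An alternative approach that bypasses the extremization, which I would try if the equioscillation argument becomes cumbersome, is to use a direct interpolation estimate: pick $k+1$ points $x_0 < \dots < x_k$ in $\omega$ well-spread inside $V$ (which exist since $\abs{\omega} > 0$), write $p$ via Lagrange interpolation at these nodes, and bound the Lagrange basis polynomials on $V$ by products of the form $\abs{V}/\mathrm{dist}(x_i, x_j)$. A pigeonhole-style choice of spread-out nodes (using that $\omega$ has measure $\abs{\omega}$) then yields the $(4\abs{V}/\abs{\omega})^k$ factor, perhaps with a slightly worse constant, which is still sufficient for the paper's subsequent application.
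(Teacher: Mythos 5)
Your proposal is correct, but note that the paper itself does not prove this lemma at all: it is quoted as a classical result with a pointer to the literature (the Brudnyi reference), so there is no internal proof to compare against. What you do is reduce the stated bound to the classical Remez inequality, and that reduction is carried out correctly: the affine normalization to $V=[-1,1]$ is legitimate since both sides transform the same way, the Remez extremal value is indeed $T_k\bigl(\tfrac{2+s}{2-s}\bigr)$ with $s=2-\abs{\omega}$, and the estimate $T_k(x)\le \bigl(x+\sqrt{x^2-1}\bigr)^k\le (2x)^k$ together with $2+s\le 4$ gives $\bigl(8/\abs{\omega}\bigr)^k$, which rescales to $\bigl(4\abs{V}/\abs{\omega}\bigr)^k$ exactly as claimed. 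The one caveat is that the genuinely hard step, the Remez inequality itself, is only sketched (compactness, equioscillation, rearrangement to an endpoint interval); since that is itself a named classical theorem, your argument has essentially the same logical status as the paper's citation, just with the constant bookkeeping made explicit. Your fallback via Lagrange interpolation at $k+1$ nodes chosen so that consecutive nodes are separated by at least $\abs{\omega}/(k+1)$ also works, and yields a bound of the form $\bigl(C\abs{V}/\abs{\omega}\bigr)^k$ with $C$ somewhat larger than $4$ (roughly $2e$ after Stirling cancellation); this is indeed sufficient for the paper's use in Lemma \ref{lemma:locbeh}, where the precise constant only affects the admissible choice of $c_\varphi$.
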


\begin{lemma}\label{lemma:locbeh}
Let $\varphi_t$ be a quasi-unipotent flow on $\Gamma \backslash G$. Then there exists $1/20>c_\varphi>0$ such that if $\inj(\Gamma \backslash G) > \eta > 0$, $d(\bar{x},\bar{y}) < \eta$ and $S=S_\eta(\bar{x},\bar{y})>0$ is the smallest number for which $d_{\Gamma\backslash G}(\varphi^S\bar{x},\varphi^S\bar{y})=\eta$. Then
$$\left|\{t\in[0,S]: d_{\Gamma\backslash G}(\varphi^t\bar{x},\varphi^t\bar{y})<c_\varphi\eta\}\right|<\frac{S}{10}.$$

\end{lemma}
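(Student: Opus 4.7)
The plan is to argue by contradiction: assume that $|A| \ge S/10$, where $A := \{t \in [0,S] : d_{\Gamma \backslash G}(\varphi^t\bar{x}, \varphi^t\bar{y}) < c_\varphi \eta\}$, and deduce that $d_{\Gamma \backslash G}(\varphi^S\bar{x}, \varphi^S\bar{y}) < \eta$, contradicting the defining property of $S$.

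First, since $d_{\Gamma \backslash G}(\varphi^t\bar{x}, \varphi^t\bar{y}) \le \eta < \inj(\Gamma \backslash G)$ throughout $[0,S]$, we lift the orbit separation to $\mf g$: writing $\bar{y} = \bar{x}\algexp(X)$, the vector $X_t$ from equation \eqref{eq:define-Xt} is well-defined on the interval, and by Lemma \ref{lem:bilipischitz} (for $\eta$ sufficiently small, which is harmless after possibly shrinking $c_\varphi$) $d_{\Gamma \backslash G}(\varphi^t\bar{x}, \varphi^t\bar{y})$ is comparable to $\|X_t\|$ up to bounded multiplicative constants. Combined with the equivalence of $\|\cdot\|$ and $\|\cdot\|_\infty$ on $\mf g$, this reduces the lemma to coordinate-wise control of $X_t$. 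Expanding $X_t$ in the fixed chain basis, Lemma \ref{lem:polynomial-growth} gives polynomial coordinates $a_j^i(t)$ of degree at most $m_i$ for chains, and twisted-polynomial coordinates $b_j^i(t) = \cos(\alpha t) f_j^i(t) - \sin(\alpha t) g_j^i(t)$, $c_j^i(t) = \sin(\alpha t) f_j^i(t) + \cos(\alpha t) g_j^i(t)$ with $f_j^i, g_j^i$ polynomials of degree at most $m_i$ for double chains.

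Under the contradiction hypothesis, for every $t \in A$ the coordinate control gives $|a_j^i(t)| \le C_1 c_\varphi \eta$ and $\sqrt{b_j^i(t)^2 + c_j^i(t)^2} \le C_1 c_\varphi \eta$ for a universal constant $C_1$. The trigonometric identity $b_j^i(t)^2 + c_j^i(t)^2 = f_j^i(t)^2 + g_j^i(t)^2$ converts the double-chain estimate into genuine polynomial control, yielding $|f_j^i(t)|, |g_j^i(t)| \le C_1 c_\varphi \eta$ on $A$. Now apply the Brudnyi-Ganzburg inequality (Lemma \ref{lem:brudnyi}) with $V = [0,S]$ and $\omega = A$, so that $|V|/|\omega| \le 10$: for every polynomial coordinate $p$ in $\{a_j^i, f_j^i, g_j^i\}$ of degree at most $M := \max_i m_i$,
\[
\sup_{[0,S]} |p| \le 40^M \, C_1 c_\varphi \eta.
\]

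Reassembling these bounds into an estimate on $\|X_t\|$, we conclude $\sup_{t \in [0,S]} \|X_t\| \le C_2 \cdot 40^M c_\varphi \eta$ for some $C_2$ depending only on $\mf g$ and the chain basis, and the bi-Lipschitz bound then gives $d_{\Gamma \backslash G}(\varphi^S\bar{x},\varphi^S\bar{y}) \le C_3 \cdot 40^M c_\varphi \eta$. Choosing $c_\varphi := \min\{1/20,\,(2C_3 \cdot 40^M)^{-1}\}$ forces $d_{\Gamma \backslash G}(\varphi^S\bar{x},\varphi^S\bar{y}) < \eta$, contradicting the definition of $S$. The main obstacle is the double-chain case, where the coordinates $b_j^i, c_j^i$ are not themselves polynomials and Brudnyi-Ganzburg does not apply directly; the key observation that resolves it is that rotations are Euclidean isometries on each double-chain plane, so $b_j^i(t)^2 + c_j^i(t)^2 = f_j^i(t)^2 + g_j^i(t)^2$ reduces matters to polynomial control of $f_j^i, g_j^i$ individually.
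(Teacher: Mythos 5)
Your proposal is correct and follows essentially the same route as the paper: both rest on the bi-Lipschitz comparison of $d(\varphi^t\bar{x},\varphi^t\bar{y})$ with $\|X_t\|$ (Lemma \ref{lem:bilipischitz}), the polynomial chain-coordinate description of $X_t$ (Lemma \ref{lem:polynomial-growth}), and the Brudnyi--Ganzburg inequality on $V=[0,S]$ versus $\omega$. The only cosmetic differences are that the paper applies Brudnyi--Ganzburg once to the single polynomial $\|X_t\|^2$ (which is a polynomial for exactly the reason you isolate, namely $b^2+c^2=f^2+g^2$) and bounds $|\omega|$ directly, whereas you argue by contradiction and apply the inequality coordinate-wise to $a_j^i$, $f_j^i$, $g_j^i$.
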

\begin{proof}
Let $\eta_\varphi$ be such that the assumptions of Lemma \ref{lem:bilipischitz} are satisfied for $\epsilon=\eta_\varphi$. By Lemma \ref{lem:bilipischitz}, we know for $t\in[0,S]$
$$\|X_t\|(1-C\|X_t\|)\leq d_{\Gamma\backslash G}(\varphi^{t}\bar{x},\varphi^{t}\bar{y})\leq\|X_t\|(1+C\|X_t\|).$$

Let $\omega:=\{t\in[0,S] : d_{\Gamma\backslash G}(\varphi^{t}\bar{x},\varphi^{t}\bar{y})< c_{\varphi}\eta\}$ and $V= [0,S]$. We wish to show that $\abs{\omega} < S/10$. Then from above inequality, by choosing $\eta_{\phi}$ small enough, we can guarantee that $C\|X_t\|<\frac{1}{2}$ in Lemma \ref{lem:bilipischitz}, we have $$\|X_t\|<2c_{\varphi}\eta\text{ for }t\in\omega,$$$$\|X_{S}\|\geq\frac{1}{2}\eta.$$

Since $\|X_t\|^2$ is a polynomial in chain basis, then by Lemma \ref{lem:brudnyi}, if $k = \max\set{m_i}$, we have
\begin{equation}
\sup_{V}\|X_t\|^2\leq (\frac{4|V|}{|\omega|})^{k}\sup_{\omega}\|X_t\|^2.
\end{equation}
By definition, it is clear that $\sup_{\omega}\|X_t\|^2\leq 4c_{\varphi}^2\eta^2$ and $\sup_{V}\|X_t\|^2\geq\frac{1}{4}\eta^2$, then
$$\frac{1}{4}\eta^2\leq \left(\frac{4S}{|\omega|}\right)^kc_{\varphi}^2\eta^2.$$
This equivalents to
$$|\omega|\leq 4(2c_{\varphi})^{\frac{2}{k}}S.$$

By picking $c_{\varphi}$ such that $4(2c_{\varphi})^{\frac{2}{k}}<\frac{1}{10}$, we know $|\omega|<\frac{S}{10}$.

\end{proof}

Let us now define the following partition of $\Gamma \backslash G$. Let $\eta < \inj(\Gamma \backslash G)$ and $P_\epsilon$ be a finite partition of $\Gamma \backslash G$ into with atoms of diameter $\leq \epsilon$.  Throughout this section, we fix such a partition $\mathcal{P}:=P_{c_\varphi\eta}$. Note that since the paritions converge to the full $\sigma$-algebra in the limit, the slow entropy is $R$ for sufficiently small $\eta$ suffices.\footnote{If we assumed the flow was ergodic, we could instead without loss of generality assume the partition is generating} 

\begin{lemma}\label{lemma:longblock}
There exists $\epsilon_0,T_0>0$ such that for every $\epsilon<\epsilon_0$, 
 $T>T_0$, if $\bar{d\,}_{\varphi,\mc P}^T(\bar{x},\bar{y})<\epsilon$, then there exists an interval $I\subset[0,T]$ with $|I|\geq 8T/10$ and $d(\varphi^t(\bar{x}),\varphi^t(\bar{y}))<\eta$ for every $t\in I$.
\end{lemma}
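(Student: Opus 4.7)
The plan is to exploit the Hamming bound together with Lemma \ref{lemma:locbeh} to upgrade the measure-theoretic fact ``$d$ is small most of the time'' into the topological fact ``$d < \eta$ on a single long interval''. Set
\[ A := \{t \in [0, T] : d(\varphi^t \bar x, \varphi^t \bar y) > c_\varphi \eta\}, \qquad F := \{t \in [0, T] : d(\varphi^t \bar x, \varphi^t \bar y) \geq \eta\}. \]
Since atoms of $\mathcal P$ have diameter at most $c_\varphi \eta$, any two points in a common atom are at distance at most $c_\varphi \eta$, so the Hamming hypothesis $\bar d_{\varphi,\mathcal P}^T(\bar x, \bar y) < \epsilon$ forces $|A| \leq \epsilon T$ and hence $|F| \leq \epsilon T$, so $|F^c| \geq (1-\epsilon) T$. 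By continuity $F$ is closed in $[0,T]$, so $F^c$ is a countable disjoint union of maximal intervals $I_j = (a_j, b_j)$ (possibly containing $0$ or $T$ as an endpoint).

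The core step applies Lemma \ref{lemma:locbeh} to each $I_j$ with $b_j < T$. At $b_j$ the distance equals $\eta$, while on $(a_j, b_j)$ it is strictly less than $\eta$, so the lemma (applied to the shifted initial data $\varphi^s(\bar x),\varphi^s(\bar y)$ with $s \searrow a_j$ and $S = b_j - s$) yields $|\{t \in I_j : d(\varphi^t \bar x, \varphi^t \bar y) < c_\varphi \eta\}| \leq |I_j|/10$, equivalently $|I_j \cap A| \geq 9|I_j|/10$. Summing over these ``non-terminal'' intervals gives $\sum_{b_j < T} |I_j| \leq \tfrac{10}{9} |A| \leq \tfrac{10}{9} \epsilon T$.

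Since $|F^c| \geq (1-\epsilon) T$ while the non-terminal intervals contribute at most $\tfrac{10}{9} \epsilon T$, for $\epsilon$ small there must exist a (necessarily unique) interval $I_* = (a_*, T]$ with right endpoint $T$, and it satisfies $|I_*| \geq (1-\epsilon) T - \tfrac{10}{9} \epsilon T = (1 - \tfrac{19}{9}\epsilon) T$. Choosing $\epsilon_0 = 9/95$ (and $T_0$ arbitrary, e.g.\ $T_0 = 1$) guarantees $|I_*| \geq \tfrac{8}{10} T$; since $I_* \subset F^c$, we have $d(\varphi^t \bar x, \varphi^t \bar y) < \eta$ for every $t \in I_*$. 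The only mildly delicate point is the boundary: Lemma \ref{lemma:locbeh} cannot be applied to the unique interval of $F^c$ terminating at $T$, but this is precisely the long interval we wish to produce, so the obstruction is only apparent.
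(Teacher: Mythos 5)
Your proposal is correct and takes essentially the same route as the paper: both arguments use Lemma \ref{lemma:locbeh} to show that every excursion on which $d(\varphi^t\bar{x},\varphi^t\bar{y})$ stays below $\eta$ but eventually hits $\eta$ is at least $90\%$ filled with times where the distance exceeds $c_\varphi\eta$, combine this with the Hamming bound $\abs{\set{t\in[0,T] : d(\varphi^t\bar{x},\varphi^t\bar{y})>c_\varphi\eta}}<\epsilon T$, and deduce that the one remaining excursion reaching time $T$ has length at least $8T/10$. The paper organizes the excursions through inductively defined stopping times $t_i,S_i$ instead of your maximal intervals of $\set{d<\eta}$ with the limiting argument $s\searrow a_j$, but this is only a difference in bookkeeping, not in the underlying idea.
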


Note as an immediate Corollary, we get:

\begin{corollary}
\label{cor:ham-in-bowen}
There exists $\epsilon_0,T_0>0$ such that for every $\epsilon<\epsilon_0$, 
 $T>T_0$, $B_{\varphi,\mc P}^T(\bar{x},\epsilon) \subset \varphi^{-2T/10}(B_\varphi^{6T/10}(\varphi^{2T/10}\bar{x},\epsilon))$
\end{corollary}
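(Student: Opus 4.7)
The plan is to read the inclusion directly off Lemma \ref{lemma:longblock} via a one-line pigeonhole argument on subintervals of $[0,T]$: any subinterval of length at least $8T/10$ sitting inside $[0,T]$ must contain the central segment $[2T/10, 8T/10]$, which has length exactly $6T/10$. All the dynamical content of the statement is already encoded in Lemma \ref{lemma:longblock}; the corollary merely repackages the "long good block" supplied there as a Bowen-ball inclusion based at the shifted point $\varphi^{2T/10}\bar{x}$ and running for time $6T/10$.

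Concretely, I would take $\epsilon_0$ and $T_0$ to be those supplied by Lemma \ref{lemma:longblock} (possibly shrunk further to arrange $\eta\le\epsilon_0$, which is consistent with the convention that $\eta$ is chosen small before $\epsilon$ is varied), and fix $\bar{y}\in B_{\varphi,\mc P}^T(\bar{x},\epsilon)$ with $\epsilon<\epsilon_0$ and $T>T_0$. Lemma \ref{lemma:longblock} then supplies an interval $I=[a,b]\subset[0,T]$ with $b-a\ge 8T/10$ on which $d(\varphi^t\bar{x},\varphi^t\bar{y})<\eta$ for every $t\in I$. From $b\le T$ together with $b-a\ge 8T/10$ one gets $a\le 2T/10$, and from $a\ge 0$ together with $b-a\ge 8T/10$ one gets $b\ge 8T/10$. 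Hence $[2T/10,8T/10]\subset I$, so
\[ d(\varphi^{2T/10+s}\bar{x},\varphi^{2T/10+s}\bar{y})<\eta\le\epsilon\qquad\text{for every } s\in[0,6T/10]. \]
Substituting $t=2T/10+s$ and unwinding the definition of a Bowen ball, this is precisely the statement that $\varphi^{2T/10}\bar{y}\in B_\varphi^{6T/10}(\varphi^{2T/10}\bar{x},\epsilon)$, equivalently $\bar{y}\in\varphi^{-2T/10}(B_\varphi^{6T/10}(\varphi^{2T/10}\bar{x},\epsilon))$, which is the claimed containment.

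The only mild subtlety is the double use of the symbol $\epsilon$: on the left it controls a fraction of disagreements in the $\mc P$-coding, while on the right it is a Riemannian Bowen radius, and the natural scale coming out of Lemma \ref{lemma:longblock} is $\eta$ rather than $\epsilon$. Working in the regime $\eta\le\epsilon<\epsilon_0$ reconciles the two, and this is harmless for the downstream slow-entropy estimates because one first fixes the partition scale $\eta$ and only then lets $\epsilon\to 0$. Past that notational bookkeeping, the proof is exhausted by the trivial interval-containment observation above, so I do not foresee any substantive obstacle.
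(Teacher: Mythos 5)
Your argument is essentially identical to the paper's: apply Lemma \ref{lemma:longblock} to a point of the Hamming ball, observe that $b-a\ge 8T/10$ together with $[a,b]\subset[0,T]$ forces $a\le 2T/10$ and $b\ge 8T/10$, and read off the Bowen-ball containment after shifting by $\varphi^{2T/10}$. The $\epsilon$-versus-$\eta$ mismatch you flag is a real imprecision in the statement itself rather than in your argument (the paper's own proof and its later application actually work at the fixed scale $\eta$, resp.\ $c_\varphi\eta$, not at the Hamming parameter $\epsilon$), so your bookkeeping remark is apt and nothing further is missing.
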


\begin{proof}
Let $\epsilon<\epsilon_\varphi$, $T>T_\varphi$ and $\bar{y} \in B_{\varphi,\mc P}^T(\bar{x},\epsilon)$. Then by Lemma \ref{lemma:longblock} there exists $I=[a,b]$, $b-a\geq \frac{8}{10}T$ such that for $t\in[0,\frac{8}{10}T]\subset [0,b-a]$
$$
d(\varphi^{t}(\varphi^a(\bar{x})),\varphi^{t}(\varphi^a(\bar{y})))<c_\varphi\eta.
$$

By $b-a\geq \frac{8}{10}T$, we know $a\leq\frac{2}{10}T$ and $b\geq\frac{8}{10}T$. This is exactly the containment claimed
\end{proof}

\begin{proof}[Proof of Lemma \ref{lemma:longblock}]
Fix $\bar{x},\bar{y}\in \Gamma \backslash G$ such that $\bar{d\,}_{\varphi,\mc P}^T(\bar{x},\bar{y})<\epsilon$ and $\epsilon<\frac{1}{10}$. To simplify notation we write $\bar{x}_t=\varphi^t\bar{x}, \bar{y}_t=\varphi^t\bar{y}$.
Divide the interval $[0,T]$ by the following procedure: let $t_0\geq 0$ be the smallest for which $d(\bar{x}_{t_0},\bar{y}_{t_0}) \le c_\varphi\eta$. Let $S_0=S(\bar{x}_{t_0},\bar{y}_{t_0})$ be given by Lemma \ref{lemma:locbeh}. Now inductively we define $t_{i+1}\geq t_i+S_i$
to be the smallest for which $d(\bar{x}_{t_i},\bar{y}_{t_i}) \le c_\varphi\eta$. We continue until $t_{J+1}\geq T$. Hence by the definition of $(t_j)$ and $(S_j)$ it follows that
\begin{equation}\label{eq:sj}
d(\bar{x}_t,\bar{y}_t)\leq \eta \text{ for every } t\in[t_{J},\tilde{T}],
\end{equation}
where $\tilde{T}:=\min(t_J+S_J,T)$. Moreover, since $t_{J+1}\geq T$ it follows that
\begin{equation}\label{eq:sj2}
d(\bar{x}_t,\bar{y}_t)\geq c_\varphi \eta \text{ for every } t\in[\tilde{T},T],
\end{equation}
 By Lemma \ref{lemma:locbeh}, for every $i=0,...,J$
$$
\left|\{t\in [t_i,t_i+S_i]: d(\varphi^t(\bar{x}),\varphi^t(\bar{y}))\geq c_\varphi \eta\}\right|\geq \frac{9}{10}S_i.
$$
Using \eqref{eq:sj2} and the fact that $\bar{d\,}_{\varphi,\mc P}^T(\bar{x},\bar{y})<\epsilon$, we get
\begin{multline*}
(1-\epsilon)T\leq |\{t\in[0,T]: d(\bar{x}_t,\bar{y}_t)<c_\varphi\eta\}|=|\{t\in[0,\tilde{T}]: d(\bar{x}_t,\bar{y}_t)<c_\varphi\eta\}|=\\
|\{t\in[0,t_J]: d(\bar{x}_t,\bar{y}_t)<c_\varphi\eta\}|\}|+|\{t\in[t_{J},\tilde{T}]: d(\bar{x}_t,\bar{y}_t)<c_\varphi\eta\}|\leq \frac{t_{J}}{10}+(\tilde{T}-t_{J}).
\end{multline*}
Hence (since $t_J\leq T$), $|\tilde{T}-t_{J}|\geq (1-\epsilon)T-\frac{T}{10}\geq \frac{8T}{10}$.
Defining $I:=[t_{J},\tilde{T}]$ finishes the proof by \eqref{eq:sj}.
\end{proof}

Using Lemma \ref{lemma:longblock} we can prove the metric part of Theorem \ref{thm:main}.

\begin{proof}[Proof of Theorem \ref{thm:main} (cocompact lattice metric part)]


By Corollary \ref{cor:ham-in-bowen}, $\mu(B_{\varphi,\mc P}^T(\bar{x},\epsilon)) \le \mu( \varphi^{-2T/10}(B_\varphi^{6T/10}(\varphi^{2T/10}\bar{x},\epsilon)))$. Notice that by Proposition \ref{prop:Bowen-ball}
$$\mu(B_{\varphi}^{\frac{6}{10}T}(\varphi^{\frac{2}{10}T}\bar{x},c_{\varphi}\eta))\leq C_0(c_{\varphi}\eta,\varphi)T^{-R},
$$
hence the metric growth is large equal than $T^{R}$. By Theorem \ref{thm:Goodwyn} and topological cocompact case of Theorem \ref{thm:main}, the metric growth is asymptotically equal to $T^R$.
\end{proof}

\subsection{Characterization of Kronecker Systems}

We now show the failure of the variational principal in a general setting. We do so even in the smooth category. We first show some certain characterizations of transformations with zero entropy at every scale $a$. For metric slow entropy, this was fist done in \cite{Ferenczi} using different language:

\begin{proposition}[\cite{Ferenczi}]
A measure-preserving transformation or flow $f$ has $h_{\mu,a_\chi}(f) = 0$ with respect to every family of scales $a_\chi$ if and only if $f$ is measurably conjugate to an action by translations on a compact abelian group
\end{proposition}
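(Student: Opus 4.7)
The plan is to reduce this characterization to a theorem of Ferenczi \cite{Ferenczi} that classifies Kronecker systems by bounded measure-theoretic complexity, and then translate between Ferenczi's ``bounded complexity'' language and the scale-family formulation of slow entropy used in Definition \ref{def:metric}. Specifically, I will invoke Ferenczi's result that $f$ is measurably conjugate to a translation on a compact abelian group if and only if, for every finite partition $\mc P$ and every $\epsilon > 0$, the function $T \mapsto S(\mc P, T, \epsilon)$ defined in \eqref{bal1} is uniformly bounded in $T$.

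For the ``if'' direction (Kronecker implies zero slow entropy at every scale), assume $f$ is conjugate to a translation on a compact abelian group. By Ferenczi, $S(\mc P, T, \epsilon)$ is bounded in $T$ for every $\mc P$ and every $\epsilon > 0$. Since any admissible scale family $\{a_\chi\}$ satisfies $a_\chi(T) \to \infty$, one obtains $A(\varphi, \mc P, \epsilon, a_\chi) = 0$ for every $\chi > 0$ and every $\epsilon > 0$. Hence $h_{m, a_\chi}(\varphi, \mc P) = 0$ for every partition, giving $h_{\mu, a_\chi}(f) = 0$.

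For the converse I argue by contrapositive. If $f$ is not Kronecker, Ferenczi's theorem yields a partition $\mc P$ and $\epsilon > 0$ such that $S(\mc P, T, \epsilon)$ is unbounded in $T$. Let $s(T) := \max_{t \le T} S(\mc P, t, \epsilon)$, which is nondecreasing and tends to $\infty$, and define the scale family
\[ a_\chi(T) := (s(T) + 1)^\chi, \qquad \chi \in \R_+. \]
This is a valid family: each $a_\chi(T) \to \infty$, and $a_\chi/a_{\chi'} = (s(T)+1)^{\chi-\chi'} \to 0$ whenever $\chi < \chi'$. Choose any sequence of ``record values'' $T_k \to \infty$ at which $s$ strictly increases, so that $S(\mc P, T_k, \epsilon) = s(T_k) \to \infty$. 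For any $0 < \chi < 1$,
\[ S(\mc P, T_k, \epsilon)/a_\chi(T_k) = s(T_k)^{1-\chi}(1+o(1)) \to \infty, \]
so $A(\varphi, \mc P, \epsilon, a_\chi) = +\infty > 0$ for every $0 < \chi < 1$. Thus $h_{m, a_\chi}(\varphi, \mc P) \ge 1$ and $h_{\mu, a_\chi}(f) > 0$.

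The only real obstacle is invoking Ferenczi's theorem in the precise setting of this paper: one must verify that his definition of measure-theoretic complexity coincides with the coding-based $S(\mc P, T, \epsilon)$ used here, and that his spectral argument extends from transformations to continuous-time flows (routinely handled by passing to the time-one map with a suitable refinement of the partition). The remainder is a mechanical dictionary between scale families and the bounded-complexity condition.
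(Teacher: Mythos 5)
Your proposal is correct and matches the paper's treatment: the paper gives no independent argument for this proposition but simply cites Ferenczi's characterization of Kronecker systems by bounded measure-theoretic complexity, which is exactly the result you invoke. Your added dictionary between bounded complexity and vanishing slow entropy for every scale family (including the custom family $a_\chi(T)=(s(T)+1)^\chi$ in the contrapositive direction) is sound and is precisely the ``different language'' translation the paper alludes to.
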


We prove an analogy in the topological category, namely:

\begin{proposition}
\label{prop:zero-all-scales}
A minimal homeomorphism $f : X \to X$ of a compact metric space has $h_{\operatorname{top},a_\chi}(f) = 0$ for every family of scales $a_\chi$ if and only if $f$ is topologically conjugate to a translation on a compact abelian group.
\end{proposition}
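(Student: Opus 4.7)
The direction $(\Leftarrow)$ is immediate: conjugating $f$ to a translation on a compact abelian group equipped with a translation-invariant metric makes $f$ an isometry, so Bowen balls of radius $\epsilon$ coincide with ordinary $\epsilon$-balls, $N_{f,X}(\epsilon,T)$ is independent of $T$, and $h_{\operatorname{top},a_\chi}(f)=0$ for every scale family $\{a_\chi\}$ with $a_\chi(T)\to\infty$.

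For $(\Rightarrow)$ the plan is to reduce to the classical theorem that a minimal homeomorphism of a compact metric space is topologically conjugate to a translation on a compact metrizable abelian group if and only if it is equicontinuous. So it suffices to prove equicontinuity of $f$ from the vanishing hypothesis. The first step is to show the hypothesis forces $N_{f,X}(\epsilon,T)$ to be bounded in $T$ for every $\epsilon>0$. By contrapositive: if $N_{f,X}(\epsilon_0,T)$ is unbounded for some $\epsilon_0>0$, then -- being nondecreasing in $T$ -- it tends to infinity, so $a_\chi(T):=N_{f,X}(\epsilon_0,T)^\chi$ is a legitimate scale family (each $a_\chi \to \infty$, and $a_\chi=o(a_{\chi'})$ for $\chi<\chi'$). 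For every $\epsilon\le\epsilon_0$,
\[
\delta^N_{f,X,a_\chi} = \limsup_{T\to\infty}\frac{N_{f,X}(\epsilon,T)}{N_{f,X}(\epsilon_0,T)^\chi} \ge \limsup_{T\to\infty} N_{f,X}(\epsilon_0,T)^{1-\chi} > 0
\]
for every $\chi\le 1$, so $h_{\operatorname{top},a_\chi}(f)\ge 1$, contradicting the hypothesis.

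The second step passes from boundedness of $N_{f,X}(\epsilon,T)$ to equicontinuity. For this I introduce the $f$-nonexpanding pseudometric $D(x,y):=\sup_{n\ge 0}d(f^n x,f^n y)$. Boundedness of $N_{f,X}(\epsilon,T)$ in $T$ at every scale is precisely total boundedness of $(X,D)$ at every scale, and completeness of $(X,D)$ follows by a standard diagonal argument: a $D$-Cauchy sequence is $d$-Cauchy hence $d$-converges to some $x\in X$, and passing to the $d$-limit inside the Cauchy estimate (using continuity of each $f^n$) yields $D$-convergence. Hence $(X,D)$ is $D$-compact. Since $D\ge d$, the identity map $(X,D)\to(X,d)$ is a continuous bijection between compact Hausdorff spaces, hence a homeomorphism, so $d$ and $D$ generate the same topology -- which is exactly equicontinuity of $\{f^n:n\ge 0\}$. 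Being a $D$-nonexpanding surjection of the compact metric space $(X,D)$, $f$ is in fact a $D$-isometry, upgrading the forward equicontinuity to two-sided equicontinuity and completing the reduction.

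The principal obstacle is this second step -- cleanly translating the combinatorial hypothesis of bounded Bowen covering numbers into genuine equicontinuity via the pseudometric $D$; the first step is a soft contrapositive construction and the concluding step is a citation to the classical theory of minimal equicontinuous flows.
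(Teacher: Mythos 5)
Your proof is correct, but the middle of the argument runs along a genuinely different track from the paper's. The paper's forward direction works directly with finite tuples of Bowen-ball centers: it forms the nested sets $Y_t(\epsilon)$ of covering $N_0$-tuples, shows $\overline{Y_t(\epsilon/2)} \subset Y_t(\epsilon)$, extracts a single tuple whose infinite-time Bowen balls $\bigcap_{t>0}\overline{B_f^t(x_i,\epsilon)}$ cover $X$, finds one with nonempty interior, and then uses \emph{minimality} (every orbit enters that interior ball by a fixed time $T$, plus equicontinuity of the finite family $\{f^t : t \le T\}$) to propagate the local control to all of $X$ and obtain forward equicontinuity; the conclusion is then the same folklore characterization you cite (Lemma \ref{lem:equi}). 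You instead repackage bounded Bowen covering numbers as compactness of $(X,D)$ for the nonexpanding pseudometric $D=\sup_{n\ge 0} d(f^n\cdot,f^n\cdot)$, get equicontinuity from the fact that the identity $(X,D)\to(X,d)$ is a homeomorphism of compact spaces, and upgrade to two-sided equicontinuity via the classical fact that a $1$-Lipschitz surjection of a compact metric space is an isometry. What your route buys: minimality is never used until the final citation, so you actually prove the stronger statement that zero topological slow entropy at all scales forces equicontinuity of an arbitrary homeomorphism, and the argument is arguably cleaner. Two small points to make explicit: (i) your first step (the contrapositive with $a_\chi(T)=N_{f,X}(\epsilon_0,T)^\chi$) is a welcome justification of a reduction the paper merely asserts; (ii) the assertion that bounded $N_{f,X}(\epsilon,T)$ ``is precisely'' total boundedness of $(X,D)$ hides the one nontrivial step, since the covering centers depend on $T$ -- you need to pass to a subsequential limit of the center tuples in $X^{N_0}$ and use continuity of each $f^n$ to see that the limiting points are centers of $D$-balls of radius (say) $2\epsilon$ covering $X$; this is exactly the role played by the paper's closure argument $\overline{Y_t(\epsilon/2)}\subset Y_t(\epsilon)$, and since you already run the analogous limit argument for completeness, it is a routine addition rather than a gap.
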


We use the following folklore characterization:

\begin{lemma}
\label{lem:equi}
Let $f : X \to X$ be a transitive homeomorphism or flow of a compact metric space. Then $f$ is topologically conjugate to translations on a compact abelian group if and only if $\set{f^t}$ is uniformly equicontinuous.
\end{lemma}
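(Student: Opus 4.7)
The easy direction is immediate: if $f$ is topologically conjugate to a translation $L_g$ on a compact metrizable abelian group $H$, then $H$ admits a translation-invariant metric (a standard fact for compact metrizable abelian groups), with respect to which every $L_g^t = L_{g^t}$ is an isometry. Pulling this metric back via the conjugacy gives an equivalent metric on $X$ for which $\{f^t\}$ is equicontinuous. So the work is in the converse.

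Assume $\{f^t\}$ is uniformly equicontinuous. The plan is the classical Ellis construction: realize $X$ as a compact abelian group, on which $f$ acts by translation. First, replace $d$ by the equivalent metric $d'(x,y) = \sup_t d(f^t x, f^t y)$; equicontinuity ensures $d'$ generates the same topology, and by construction every $f^t$ is a $d'$-isometry. Next, let $G$ be the closure of $\{f^t\}$ inside $C(X,X)$ equipped with the uniform $d'$-metric. By Arzelà--Ascoli (using uniform equicontinuity and pointwise precompactness from compactness of $X$), $G$ is compact. Composition on $G$ is continuous; since the dense subset $\{f^t\}$ is an abelian group under composition, $G$ is an abelian topological group.

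Now pick $x_0 \in X$ with dense $f$-orbit (possible by transitivity) and define $\Phi \colon G \to X$ by $\Phi(g) = g(x_0)$. Continuity of $\Phi$ is clear, and the image contains the dense orbit of $x_0$ and is closed (as a continuous image of a compact space), so $\Phi$ is surjective. For injectivity, suppose $g_1(x_0) = g_2(x_0)$. Given any $x \in X$ write $x = \lim f^{t_n}(x_0)$; since each $g_i \in G$ commutes with every $f^t$ (by commutativity of $G$) and $g_i$ is continuous, we get
\[
g_i(x) = \lim g_i(f^{t_n}(x_0)) = \lim f^{t_n}(g_i(x_0)),
\]
so $g_1(x) = g_2(x)$. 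Hence $\Phi$ is a continuous bijection between compact Hausdorff spaces, thus a homeomorphism. Finally, $\Phi(f \cdot g) = f(\Phi(g))$, so $\Phi$ conjugates left translation by $f \in G$ on $G$ to the original action of $f$ on $X$, completing the proof.

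The main obstacle is verifying that $G$ is genuinely an abelian topological group under the uniform metric and that the Arzelà--Ascoli hypothesis applies; both follow from equicontinuity but need to be stated carefully. The injectivity of $\Phi$ is the other point where attention is needed, but it rests only on the commutativity of $G$ and density of the orbit of $x_0$, both of which are in hand.
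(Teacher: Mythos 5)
The paper never proves this lemma---it is invoked as a ``folklore characterization'' with no argument given---so there is no in-paper proof to compare against. Your proof is the standard enveloping-group (Ellis) argument, and its overall structure is sound: pass to the adapted metric $d'(x,y)=\sup_t d(f^tx,f^ty)$, take $G=\overline{\{f^t\}}$ in $C(X,X)$ with the uniform metric, get compactness from Arzel\`a--Ascoli, and use transitivity to show the evaluation map $g\mapsto g(x_0)$ is a homeomorphism conjugating $f$ to a rotation of $G$; the injectivity argument via commutativity and density of the orbit is correct.

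Two points deserve more than the gesture you give them. First, ``the closure of an abelian group of maps is an abelian topological group'' is not automatic: $C(X,X)$ is only a semigroup under composition, so you must actually produce inverses in $G$. This works if the equicontinuity hypothesis is read two-sidedly (all $t\in\Z$ or $t\in\R$): then every element of $G$ is a $d'$-isometry of a compact space, hence surjective and invertible, and $g_n\to g$ uniformly forces $g_n^{-1}\to g^{-1}$ uniformly, so $G$ is closed under inversion (alternatively, one may quote that a compact cancellative topological semigroup is a group). Second, and related, is the one-sided versus two-sided issue: where the paper uses the lemma (in the proof of Proposition \ref{prop:zero-all-scales}) it only establishes equicontinuity of $\{f^t\}_{t\ge 0}$, and with the sup in $d'$ taken over $t\ge0$ the maps $f^t$ are a priori only $1$-Lipschitz, not isometries. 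The gap is bridged by the classical fact that a surjective non-expanding self-map of a compact metric space is an isometry, after which your argument goes through verbatim; you should either state which family $\{f^t\}$ you take the supremum over and add this remark, or assume two-sided equicontinuity explicitly. With those details supplied the proof is complete.
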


\begin{proof}[Proof of Proposition \ref{prop:zero-all-scales}]
First, we show that if $f$ is topologically conjugate to a translation on a compact abelian group, it has zero topological slow entropy at all scales. If $f$ is topologically conjugate to a translation of a compact abelian group, we are free to use the bi-invariant metric for which $f$ is an isometry. Hence, the Bowen balls are equal to the standard balls for arbitrary $n$. In particular, $N_{f,X}(\epsilon,T)$ is independent of $T$ for any $\epsilon$. Since $a(T)$ must tend to $\infty$, we get the result.

We now prove the converse. That is, we show that if we have zero entropy at all scales, then $f$ is topologically conjugate to a translation on a compact abelian group. Since $f$ is assumed to have 0 entropy at all scales, $N_{f,X}(\epsilon,T) \le N_0$ for some fixed $N_0 = N_0(\epsilon)$. Let $Y_t(\epsilon) \subset X^{N_0}$ be the set of $N_0$-tuples such that $(x_i) \in Y_t$ if and only if $\bigcup_i B_f^t(x_i,\epsilon) = X$. Then $Y_t(\epsilon)$ is nested and nonempty for every $t$, so $\bigcap_{t >0} \overline{Y_t(\epsilon/2)} \not= \emptyset$.

We claim that $\overline{Y_t(\epsilon/2)} \subset Y_t(\epsilon)$. Indeed, fix $t$ and suppose that $(x_i^k) \in Y_t(\epsilon/2)$ is a sequence of $N_0$-tuples (indexed by $k$) converging to $(x_i)$. $(x_i^k) \in Y_t(\epsilon/2)$ is equivalent to the property that if $y \in X$, there exists some $i$ such that $d(f^{t'}(x_i^k),f^{t'}(y)) < \epsilon/2$ for every $0 \le t' \le t$. Since $d$ and $f^{t'}$ is continuous for each $t'$, $d(f^{t'}(x_i), f^{t'}(y)) \le \epsilon/2 < \epsilon$. This proves the claim.

We may thus find an $N_0$-tuple $(x_i)$ so that $\bigcup_i \overline{B_f^t(x_i,\epsilon)} = X$ for every $t$. Since the sets are nested, and each point must lie in such a neighborhood for every $t$, the sets $B_f^\infty(x_i,\epsilon) = \bigcap_{t > 0} \overline{B_f^t(x_i,\epsilon)}$ still cover $X$. They are closed sets, but since finitely many cover the space $X$, at least one must have nonempty interior. That is, for some $z \in Z$ and $0 < \gamma < \epsilon/2$, $B(z,\gamma) \subset \overline{B_f^t(x_i,\epsilon)} \subset B_f^t(x_i,2\epsilon)$ for every $t > 0$.

Since $f$ is assumed to be minimal, we may find $T$ such that $\bigcup_{t= 0}^T f^{-t}(B(z,\gamma)) = X$. Equivalently, for every $x \in X$, there exists some $t \le T$ such that $f^t(x) \in B(z,\gamma)$. Since any compact family of homeomorphisms of a compact space is unifomly equicontinuous, we may choose $\delta$ so that $f^t(B(x,\delta)) \subset B(f^t(x),\gamma)$ for $t \le T$.

Now, given any $x \in X$, by choice of $T$, we may find $t \le T$ such that $d(f^t(x),z) < \gamma$. Furthermore, by choice of $\delta$, we conclude that $f^t(B(x,\delta)) \subset B(f^t(x),\gamma) \subset B(z,2\gamma) \subset B_f^s(x_i,\epsilon)$ for every $s > 0$. Then $f^{t+s}(B(x,\delta)) \subset B(f^s(x_i),\epsilon)$, and if $d(x,y) < \delta$, $d(f^t(x),f^t(y)) < 2\epsilon$ for every $t \ge 0$. Since $\epsilon$ was arbitrary, we conclude that $\set{f^t}$ is uniformly equicontinuous, and the result.
\end{proof}

\subsection{Failure of the Variational Principal}\label{sec: counterexampleVar}

If $f : X \to X$ is a homeomorphism of a compact metric space, let $\mc M(f)$ denote the space of invariant measures. The results of the previous section show that if we can find a minimal topological system $f : X \to X$ such that $\mc M(f)$ is finite dimensional and every $\mu \in \mc M(f)$ is Kronecker\footnote{It is not enough to say that every $\mu \in \mc M(f)$ is Kronecker, since $(x,y) \mapsto (x,y+x)$ has this property but nontrivial slow entropy}, but which is not topologically conjugate to a translation on a compact abelian group. This is possible by non-standard realization theory, and the {\it approximation-by-conjugation method} first used by Anosov and Katok in \cite{AnosovKatok}. We document the conclusion here:

\begin{proposition}
Let $M$ be a manifold with a free circle action. Then there exists a uniquely ergodic, volume preserving, $C^\infty$ diffeomorphism $f : M \to M$ which is measurably conjugate to a translation on a torus $\T^d$, $d \ge 1$.
\end{proposition}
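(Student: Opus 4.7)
The plan is to apply the Anosov--Katok approximation-by-conjugation method. Let $\{R_t\}_{t \in S^1}$ be the free $S^1$-action on $M$, and fix an $R_t$-invariant smooth probability measure $\mu$ (averaging any smooth volume form over the compact group). I build $f$ as a $C^\infty$ limit $f = \lim_n f_n$, where
$$f_n = H_n \circ R_{\alpha_n} \circ H_n^{-1}, \qquad H_{n+1} = H_n \circ h_{n+1},$$
with $\alpha_n = p_n/q_n \in \Q$ tending to some irrational $\alpha$, and each $h_{n+1} \in \mathrm{Diff}^\infty_\mu(M)$ chosen to commute with $R_{\alpha_n}$.

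\emph{$C^\infty$ convergence.} The commutation $h_{n+1} R_{\alpha_n} = R_{\alpha_n} h_{n+1}$ yields $H_{n+1} R_{\alpha_n} H_{n+1}^{-1} = f_n$, whence
$$\|f_{n+1} - f_n\|_{C^k} \le C_k \,\|H_{n+1}\|_{C^{k+1}}^{k+1}\, |\alpha_{n+1} - \alpha_n|.$$
My inductive order is thus: at stage $n$, first choose $h_{n+1}$ to achieve the combinatorial goals below; only then pick $\alpha_{n+1}$ rational with $|\alpha_{n+1} - \alpha_n| < 2^{-n}/(C_n \|H_{n+1}\|_{C^{n+1}}^{n+1})$. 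Controlling the continued-fraction tail guarantees $\alpha = \lim \alpha_n$ is irrational, and $(f_n)$ is $C^\infty$-Cauchy, hence converges to some $f \in \mathrm{Diff}^\infty_\mu(M)$.

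\emph{Unique ergodicity and Kronecker structure.} The centralizer of $R_{\alpha_n}$ in $\mathrm{Diff}^\infty_\mu(M)$ is large: any volume-preserving diffeomorphism of a fundamental domain $\mathcal{D}_n$ for $R_{\alpha_n}$ extends $R_{\alpha_n}$-equivariantly. Let $\theta : M \to S^1$ be an orbit coordinate with $\theta(R_t x) = \theta(x) + t$ (globally if the bundle is trivial, otherwise locally with equivariant bookkeeping). I will choose $h_{n+1}$ to satisfy three simultaneous properties: (a) \emph{equidistribution} --- one $f_{n+1}$-orbit of length $q_{n+1}$ meets every atom of a prescribed $(1/n)$-fine partition $\xi_n$ of $M$ with frequency within $1/n$ of its $\mu$-measure, achieved by permuting subcells of $\mathcal{D}_n$ whose $R_{\alpha_n}$-translates uniformly tile $M$; (b) \emph{orbit-coordinate preservation} --- $\|\theta \circ h_{n+1} - \theta\|_\infty \le 1/q_n$, which keeps the eigenfunctions $\psi_n^{(m)}(x) := e^{2\pi i m \theta(H_n^{-1} x)}$ Cauchy in $L^2$ at rate $2\pi m / q_n$; and (c) \emph{generation} --- the $\sigma$-algebra generated by $\theta \circ H_n^{-1}$ in the limit is the full Borel $\sigma$-algebra of $M$, achieved by making $H_n^{-1}$ progressively twist the $\theta$-level sets into thinner and thinner subsets. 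Property (a) forces every weak-$*$ limit of Birkhoff averages to equal $\mu$, giving unique ergodicity. Properties (b) and (c) together imply that the limits $\psi^{(m)} := \lim_n \psi_n^{(m)}$ form a complete orthonormal family of eigenfunctions of $f$ with eigenvalues $\{e^{2\pi i m \alpha}\}_{m \in \Z}$, so $f$ has pure point spectrum with this group. Halmos--von Neumann plus ergodicity then yield a measure-theoretic isomorphism between $f$ and the translation $R_\alpha$ on $\T^1$.

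\emph{Main obstacle.} The central difficulty is reconciling the three demands on $h_{n+1}$: spreading mass widely for unique ergodicity, moving points little in the orbit coordinate for $L^2$-convergence of eigenfunctions, and twisting $\theta$-level sets for spectral completeness. The standard resolution is to realize $h_{n+1}$ as a permutation of cells in a partition of $\mathcal{D}_n$ chosen much finer than $1/q_n$ in the $\theta$-direction but extending across $M$ in the transverse directions; cells can then be freely reshuffled (effecting (a) and (c)) while each moves by at most its own $\theta$-diameter (securing (b)). Calibrating the cell size, the denominator $q_{n+1}$, and $\|H_{n+1}\|_{C^{n+1}}$ so that all three estimates hold simultaneously is the technical heart of the construction, originally carried out by Anosov and Katok.
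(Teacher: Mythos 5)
Your proposal is exactly the approximation--by--conjugation scheme of Anosov and Katok, which is precisely what the paper relies on: the paper gives no proof of this proposition, merely recording it as a known consequence of \cite{AnosovKatok}, and your sketch (conjugated rational rotations $f_n=H_nR_{\alpha_n}H_n^{-1}$, commuting $h_{n+1}$, closeness of $\alpha_{n+1}$ chosen after $h_{n+1}$ for $C^\infty$ convergence, equidistribution for unique ergodicity, and convergence of the pulled-back orbit coordinate to get a pure point spectrum and a Halmos--von Neumann isomorphism with a rotation) is the same construction in outline. So the approach matches the paper's (cited) argument, with the remaining work being the standard calibration carried out in the original Anosov--Katok paper.
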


As a result we get the following

\begin{corollary}
Let $M$ be a manifold with a free circle action. Then there exists a $C^\infty$ diffeomorphism $f : M \to M$ and family of scales $\set{a_\chi}$ such that:

\[ \sup_{\mu \in \mc M(f)} h_{\mu,a_\chi}(f) < h_{\operatorname{top},a_\chi}(f) \]
\end{corollary}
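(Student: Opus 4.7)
The plan is to piece together the previous proposition with Ferenczi's characterization (stated for the measure-theoretic case) and the topological Kronecker characterization (Proposition \ref{prop:zero-all-scales}) just established. Take the $C^\infty$ diffeomorphism $f : M \to M$ furnished by the proposition: it is uniquely ergodic, volume preserving, and measurably conjugate to an irrational translation on $\T^d$. Moreover, one arranges the Anosov--Katok construction so that $f$ is topologically weakly mixing (hence minimal), which is a standard strengthening of the construction; this ensures in particular that $f$ is \emph{not} topologically conjugate to a translation on a compact abelian group, since such translations are never topologically weakly mixing.

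From unique ergodicity, $\mc M(f) = \set{\mu}$ where $\mu$ is the pushforward of Haar under the measurable isomorphism with the torus translation. Because $(f,\mu)$ is measurably isomorphic to an ergodic translation on a compact abelian group, the Ferenczi characterization (the measure-theoretic statement immediately preceding Proposition \ref{prop:zero-all-scales}) yields
\[ h_{\mu,a_\chi}(f) = 0 \quad \text{for every family of scales } \set{a_\chi}.\]
Thus $\sup_{\nu \in \mc M(f)} h_{\nu,a_\chi}(f) = 0$ no matter which scale we pick.

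On the topological side, $f$ is a minimal homeomorphism of the compact manifold $M$ which is \emph{not} topologically conjugate to a translation on a compact abelian group. Proposition \ref{prop:zero-all-scales} then gives the contrapositive conclusion: there exists at least one family of scales $\set{a_\chi}$ for which $h_{\operatorname{top},a_\chi}(f) > 0$. Fixing such a scale and combining with the previous paragraph yields
\[ 0 = \sup_{\nu \in \mc M(f)} h_{\nu,a_\chi}(f) < h_{\operatorname{top},a_\chi}(f),\]
which is the desired strict inequality.

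The only delicate point — the step I would single out as the main obstacle — is guaranteeing that the Anosov--Katok diffeomorphism in the preceding proposition can be arranged to be topologically non-equicontinuous (equivalently, not topologically conjugate to a translation on a compact abelian group). This is however built into the standard approximation-by-conjugation scheme: by interleaving the conjugating diffeomorphisms so that they progressively shear the circle orbits while preserving unique ergodicity, one produces a diffeomorphism that is topologically weakly mixing, which rules out any topological conjugacy to a group translation via Lemma \ref{lem:equi}. Once this qualitative property is ensured, the rest of the argument is the formal two-line combination above.
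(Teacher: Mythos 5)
Your overall route is the same as the paper's: the corollary is obtained by combining the preceding realization proposition (a uniquely ergodic, volume-preserving $C^\infty$ diffeomorphism measurably conjugate to a torus translation), Ferenczi's characterization to make the measure-theoretic slow entropy vanish at every scale for the unique invariant measure, and Proposition \ref{prop:zero-all-scales} to produce a family of scales with positive topological slow entropy once one knows $f$ is \emph{not} topologically conjugate to a translation on a compact abelian group. The paper gives no more detail than this, so the only point to scrutinize is how you certify that last non-conjugacy property.

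Two issues there. First, a small one: ``topologically weakly mixing (hence minimal)'' is not a valid implication --- topological weak mixing only gives topological transitivity. Minimality is not actually in danger: it follows from what the proposition already gives you, namely unique ergodicity together with the fact that the unique invariant measure is the smooth volume, which has full support. Second, and more substantively: the claim that the approximation-by-conjugation scheme can be arranged to be topologically weakly mixing \emph{while remaining measurably conjugate to a torus translation} is not a routine strengthening and should not be passed off as standard. The classical weakly mixing Anosov--Katok examples are measure-theoretically weakly mixing, hence not isomorphic to any group translation, so they cannot serve here; and for $M = S^1$ (which is a manifold with a free circle action) the claim is simply false, since by Denjoy any minimal $C^\infty$ circle diffeomorphism is topologically conjugate to a rotation. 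Fortunately the strong property is unnecessary: all the argument needs is that $f$ is not topologically conjugate to a group translation, which is precisely what the paper's appeal to nonstandard realization theory asserts, and which can be seen cheaply when the realized translation is a circle rotation and $\dim M \ge 2$: a topological conjugacy to a (necessarily minimal) translation on a compact abelian group $G$ would force $G$ to be a torus of dimension $\dim M$, and by unique ergodicity $f$ would then be measurably isomorphic to that translation, whose eigenvalue group has rank $\dim M \ge 2$, contradicting the rank-one point spectrum of a circle rotation. With the weak-mixing step replaced by such an argument (or by the paper's direct citation of the construction), your combination of Ferenczi's characterization and Proposition \ref{prop:zero-all-scales} is correct and matches the intended proof.
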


A similar example can be found by using an example due to Furstenburg \cite{Fur}, found in \cite{Katok-Has}:

\begin{proposition}
There exists a minimal $C^\infty$ diffeomorphism $f : \T^2 \to \T^2$ which is measurably conjugate to $(x,y) \mapsto (x+\alpha,y)$ for some $\alpha \in S^1$. In particular,

\[ \sup_{\mu \in \mc M(f)} h_{\mu,a_\chi}(f) < h_{\operatorname{top},a_\chi}(f) \]
\end{proposition}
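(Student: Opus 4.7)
The plan is to use Furstenberg's skew product construction \cite{Fur,Katok-Has}: take $f(x,y) = (x+\alpha,\, y+\varphi(x))$ on $\T^2$ with $\alpha$ an irrational (for instance, Liouville) and $\varphi \in C^\infty(S^1,\R)$ chosen so that $\varphi$ is a measurable coboundary for $R_\alpha$ but not a topological/continuous coboundary. Concretely, one arranges a merely measurable (in fact $L^2$) solution $g$ of $\varphi = g - g\circ R_\alpha$, so that $H(x,y) = (x,\, y - g(x))$ is a Borel measurable isomorphism conjugating $f$ to the product rotation $(x,y)\mapsto (x+\alpha,y)$, while simultaneously ensuring $f$ is minimal. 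These conditions are incompatible only on the topological side, which is exactly Furstenberg's point.

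For the measure-theoretic bound, I would first note that $R_\alpha$ is uniquely ergodic on $S^1$, so every $\mu \in \mathcal{M}(f)$ projects to Haar on the base circle. Via the measurable conjugacy $H$, the space $\mathcal{M}(f)$ is in bijection with $\mathcal{M}((x,y) \mapsto (x+\alpha,y))$, and each element of the latter has the form $\mathrm{Haar}_{S^1} \times \nu$ for some Borel probability $\nu$ on $S^1$. Each such measure-preserving system is a translation on a compact abelian group with an invariant Borel probability, i.e.\ a Kronecker system. By (Ferenczi's version of) the characterization stated above, $h_{\mu,a_\chi}(f) = 0$ for every scale $\set{a_\chi}$ and every $\mu \in \mathcal{M}(f)$, so $\sup_\mu h_{\mu,a_\chi}(f) = 0$.

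For the topological lower bound, I would argue by contraposition via Proposition \ref{prop:zero-all-scales}: if $h_{\operatorname{top},a_\chi}(f) = 0$ for every family of scales, then the minimal $f$ would be topologically conjugate to a translation on a compact abelian group, and hence $\set{f^n}$ would be uniformly equicontinuous by Lemma \ref{lem:equi}. However iterating the skew product yields $f^n(x,y) = (x+n\alpha,\, y + S_n\varphi(x))$ with $S_n\varphi(x) = \sum_{k=0}^{n-1}\varphi(x+k\alpha)$, and the choice of $\varphi$ as a non-topological coboundary is precisely the statement that $S_n\varphi$ is not uniformly bounded in $C^0$. This rules out uniform equicontinuity of $\set{f^n}$, a contradiction. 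Hence $h_{\operatorname{top},a_\chi}(f) > 0$ at some scale, giving the strict inequality.

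The main obstacle is the simultaneous construction of a smooth $\varphi$ that is a measurable coboundary yet not a topological one, and for which the resulting $f$ is minimal; this is exactly what Furstenberg's Liouville-based construction achieves, and I would invoke it as a black box rather than reproduce it. Everything else is a formal consequence of the structural results from the two preceding subsections (Ferenczi's characterization on the metric side, Proposition \ref{prop:zero-all-scales} and Lemma \ref{lem:equi} on the topological side).
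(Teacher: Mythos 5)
Your overall route (Furstenberg's skew product $f(x,y)=(x+\alpha,y+\varphi(x))$ plus the characterizations from the two preceding subsections) is exactly what the paper has in mind --- it offers no proof beyond citing \cite{Fur} and \cite{Katok-Has} --- but two of your steps do not work as written. On the topological side, the implication ``$\varphi$ is not a continuous coboundary, hence $S_n\varphi$ is unbounded in $C^0$, hence $\set{f^n}$ is not equicontinuous'' fails at the last arrow: in $f^n(x,y)=(x+n\alpha,\,y+S_n\varphi(x))$ the Birkhoff sums enter only modulo $1$, so unboundedness of the real-valued lifts is perfectly compatible with equicontinuity (take $\varphi\equiv c$ constant: $S_n\varphi=nc$ is unbounded, yet $f$ is a rigid rotation). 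Equicontinuity of $\set{f^n}$ only forces equicontinuity of $S_n\varphi$ up to additive constants, not boundedness. The clean repair: if $\set{f^n}$ were equicontinuous, then by Lemma \ref{lem:equi} the minimal $f$ would be topologically conjugate to a minimal translation of a compact abelian group and hence uniquely ergodic; but your $f$ preserves Lebesgue, which is non-ergodic (it is carried by $H$ to the non-ergodic product rotation), and indeed $f$ carries the continuum of mutually singular measures $H^{-1}_*(\mathrm{Haar}\times\delta_c)$, a contradiction. With that substitution the contrapositive through Proposition \ref{prop:zero-all-scales} goes through.

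On the metric side, calling each $(x,y)\mapsto(x+\alpha,y)$ with $\mathrm{Haar}\times\nu$ ``a Kronecker system'' and invoking Ferenczi's characterization is not justified for non-ergodic $\nu$: in that characterization ``translation on a compact abelian group'' means with Haar measure, and, for example, $\nu$ consisting of two atoms of unequal mass yields a system that is not measurably conjugate to any translation with Haar. This is precisely the subtlety the paper's own footnote warns about ($(x,y)\mapsto(x,y+x)$ has every ergodic invariant measure Kronecker yet positive slow entropy, so one cannot argue ergodic-component-wise either). The fix is short: metric slow entropy is an invariant of measurable isomorphism, your bijection $H_*\colon\mc M(f)\to\mc M(R)$ (valid because every invariant measure projects to Haar on the base, so the a.e.\ cocycle equation holds on a set of full measure for each of them) reduces everything to the isometry $R(x,y)=(x+\alpha,y)$, and for an isometry $N_{R,\T^2}(\epsilon,T)$ is bounded in $T$, so by Theorem \ref{thm:Goodwyn} every $R$-invariant measure has zero slow entropy at every scale; hence $\sup_{\mu\in\mc M(f)}h_{\mu,a_\chi}(f)=0$. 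With these two repairs your argument is complete and matches the intended proof.
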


Another interesting class we note are the {\it Sturmian sequences}. These are zero entropy closed subshifts on two symbols which are uniquely ergodic, and whose unique measure gives a transformation measurably isomorphic to a circle rotation. These can be obtained in the smooth category by taking the non-wandering set of the $C^1$ Denjoy examples of a nontransitive diffeomorphism. Again, because the only translations on compact abelian groups which are Cantor sets are the odometers, we get the following result:

\begin{proposition}
Any Sturmian subshift $\sigma$ preserving an invariant measure $\mu$ satisfies

\[ 0 = h_{\mu,a_\chi}(\sigma) < h_{\operatorname{top},a_\chi}(\sigma) \]

for some family of scales $\set{a_\chi}$
\end{proposition}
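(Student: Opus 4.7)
The plan is to prove the two inequalities separately, invoking the two Kronecker characterizations already in this section: the measure-theoretic one of Ferenczi and its topological counterpart, Proposition \ref{prop:zero-all-scales}. Recall that $\sigma$ is uniquely ergodic, so $\mu$ is necessarily the unique shift-invariant measure.

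First I would establish the vanishing $h_{\mu, a_\chi}(\sigma) = 0$ for every family of scales $\set{a_\chi}$. For this I would invoke the classical fact that a Sturmian subshift $(\sigma, \mu)$ is measurably isomorphic to an irrational rotation of the circle, which is in fact built into its definition via the coding of a rotation by arcs of lengths $\alpha$ and $1-\alpha$. Being measurably conjugate to a translation on $\T$, the system is Kronecker, so Ferenczi's characterization cited just above yields the vanishing at every scale.

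Next I would prove the strict inequality by contradiction on the topological side. Suppose $h_{\operatorname{top}, a_\chi}(\sigma) = 0$ for every family $\set{a_\chi}$. Sturmian subshifts are minimal (by equidistribution of the irrational rotation generating them), so Proposition \ref{prop:zero-all-scales} produces a topological conjugacy between $\sigma$ and a translation on a compact abelian group; by Lemma \ref{lem:equi}, this is equivalent to the family $\set{\sigma^n}_{n \in \Z}$ being uniformly equicontinuous on the phase space.

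Finally I would derive a contradiction from uniform equicontinuity. Working in the standard subshift metric $d(x,y) = 2^{-\min \set{\abs{k} : x_k \ne y_k}}$, uniform $(\delta, 2^{-M})$-equicontinuity forces any two points $x, y$ with $d(x, y) < \delta$ to satisfy $(\sigma^n x)_j = (\sigma^n y)_j$ for all $\abs{j} \le M$ and all $n \in \Z$; letting $M \to \infty$ this forces $x = y$, contradicting the fact that the Sturmian subshift is an infinite (indeed perfect) set. I expect this incompatibility between uniform equicontinuity and any nontrivial subshift to be the only substantive step, and it follows immediately from the combinatorics of the shift metric, so no real obstacle arises beyond a careful bookkeeping of the two characterizations.
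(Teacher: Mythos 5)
Your proof is correct, and its measure-theoretic half (unique ergodicity plus the measurable isomorphism of the Sturmian measure to an irrational rotation, then the characterization from \cite{Ferenczi}) is exactly the paper's argument. For the topological half you take a genuinely different, and more self-contained, route. The paper rules out a topological conjugacy to a group translation by noting that the Sturmian space is a Cantor set, that the only minimal translations on compact abelian groups carried by Cantor sets are odometers, and (implicitly) that a Sturmian system is not an odometer since its unique invariant measure is isomorphic to an irrational rotation rather than to a system with rational point spectrum. You instead pass through Lemma \ref{lem:equi}: a conjugacy to a group translation would make $\{\sigma^n\}_{n\in\Z}$ uniformly equicontinuous, and this is incompatible with the expansiveness of the shift metric on any infinite subshift --- in fact already the choice $\epsilon<1$ together with all $n\in\Z$ forces $x=y$, so your limit over $M$ is unnecessary: points at distance less than $\delta$ coincide, the space is $\delta$-separated and hence finite, contradicting that the Sturmian subshift is infinite (perfect). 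This avoids both the classification of minimal Cantor group translations and the spectral comparison with odometers, essentially at no cost; the paper's phrasing records instead the slightly stronger structural fact about which group translations can occur on Cantor sets. Both arguments invoke Proposition \ref{prop:zero-all-scales} and the minimality of Sturmian subshifts in the same way, and both conclude that the topological slow entropy is positive at some scale while the metric slow entropy vanishes at every scale.
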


These examples motivate the following questions:

\begin{question}
For which continuous transformations $T : X \to X$ do we have $\sup_{\mu \in \mc M(T)}h_{\mu,a_\chi} = h_{\operatorname{top},a_\chi}$ for every family of scales $a_\chi$?
\end{question}

Another observation on these examples is the following: for a given Kronecker system, one may find an ``ideal'' realization which links the measurable orbit growth structure with the topological orbit growth structure. In particular, we ask the following question:

\begin{question}
Let  $T : (X,\mu) \to (X,\mu)$ be a measure-preserving transformation of a probability space $(X,\mu)$. For which $T$ does there exist a (unique) topological system $S : Y \to Y$ and a measurable map $H : X \to Y$ such that $H_*(\mu)$ is fully supported, $H \of T = S \of H$ and $h_{\mu,a_\chi}(T) = h_{\operatorname{top},a_\chi}(S)$ for every family of scales $a_\chi$?
\end{question}

\section{Sequence Entropy of Quasi-Unipotent Flows}\label{sec:SeqeunceEntropy}
Recall the definitions of Section \ref{sec:top-seq-entropy}. We prove the following corollary of Proposition \ref{prop:Bowen-ball}.
\begin{corollary}\label{SequenceBowen}
Fix  $L > 0$, $\lambda > 1$ and $\epsilon > 0$ sufficiently small. There exists $C_0(\epsilon,L,\lambda,\varphi) > 0$ such that for every $\epsilon$-good $\bar{x} \in \Gamma \backslash G$, $C_0^{-1}\lambda^{-NR} \le \mu(B_\varphi^N(\bar{x},\epsilon;\set{L\lambda^k})) \le C_0\lambda^{-NR} $.
\end{corollary}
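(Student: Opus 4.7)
The approach is to sandwich the discrete-time Bowen ball between two continuous-time Bowen balls up to time $L\lambda^N$ with different radii, and then apply Proposition \ref{prop:Bowen-ball} directly. The lower bound is immediate: since $\set{L\lambda^k : 0 \le k \le N} \subset [0, L\lambda^N]$, one has the containment $B_\varphi^{L\lambda^N}(\bar{x},\epsilon) \subset B_\varphi^N(\bar{x},\epsilon;\set{L\lambda^k})$, and the unconditional lower bound in Proposition \ref{prop:Bowen-ball} gives
\[ \mu(B_\varphi^N(\bar{x},\epsilon;\set{L\lambda^k})) \ge \mu(B_\varphi^{L\lambda^N}(\bar{x},\epsilon)) \ge C_0(\epsilon,\varphi)^{-1}(L\lambda^N)^{-R} = (C_0 L^R)^{-1} \lambda^{-NR}, \]
yielding the lower bound after absorbing the $L$-dependence into the constant.

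For the upper bound the real work is to prove the reverse-type inclusion
\[ B_\varphi^N(\bar{x},\epsilon;\set{L\lambda^k}) \subset B_\varphi^{L\lambda^N}(\bar{x},C'\epsilon) \]
for some $C' = C'(\lambda,\varphi) > 0$, provided $N \ge m := \max_i m_i$. Writing $\bar{y} = \bar{x}\algexp(X)$ in chain-basis coordinates as in \eqref{eq:define-Xt} and applying Lemma \ref{lem:bilipischitz}, the hypothesis $d(\varphi^{L\lambda^k}\bar{x},\varphi^{L\lambda^k}\bar{y}) < \epsilon$ for each $k=0,\dots,N$ becomes the family of bounds
\[ \abs{a_j^i(L\lambda^k)} \lesssim \epsilon, \qquad \sqrt{b_j^i(L\lambda^k)^2 + c_j^i(L\lambda^k)^2} \lesssim \epsilon, \]
valid for every chain or double-chain index and every $k$. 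By Lemma \ref{lem:polynomial-growth} the $a_j^i$ are polynomials of degree $\le m$ and the pairs $(f_j^i,g_j^i)$ underlying $(b_j^i,c_j^i)$ are polynomials of degree $\le m$.

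The key technical step is a Lagrange-type interpolation on the finite-dimensional space $\mathcal{P}_m$ of polynomials of degree at most $m$: both $p \mapsto \max_{0 \le i \le m} \abs{p(\lambda^i)}$ and $p \mapsto \sup_{\tau \in [0,\lambda^m]} \abs{p(\tau)}$ are norms (the first because $m+1$ distinct roots force a polynomial of degree $\le m$ to vanish), hence equivalent with some constant $C(m,\lambda)$. Rescaling $t = L\lambda^{k_0}\tau$ and taking $k_0 = N-m$, the evaluations at $L\lambda^{N-m},\dots,L\lambda^N$ correspond to $\tau = 1,\lambda,\dots,\lambda^m$, and the rescaling sends $[0,\lambda^m]$ onto $[0,L\lambda^N]$. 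The norm equivalence therefore upgrades the pointwise bounds to $\abs{a_j^i(t)} \le C(m,\lambda) \cdot C_1 \epsilon$ on all of $[0,L\lambda^N]$. The analogous estimate for double chains is identical after invoking Corollary \ref{cor:twisted-coefficients} to reduce the twisted-polynomial bounds to the underlying pair $(f_j^i,g_j^i)$. Assembling the coordinate bounds yields $\norm{X_t}_\infty \le C_2\epsilon$ for all $t \in [0,L\lambda^N]$, and Lemma \ref{lem:bilipischitz} run in reverse gives the claimed containment. Proposition \ref{prop:Bowen-ball} then produces
\[ \mu(B_\varphi^N(\bar{x},\epsilon;\set{L\lambda^k})) \le \mu(B_\varphi^{L\lambda^N}(\bar{x},C'\epsilon)) \le C_0(C'\epsilon,\varphi)(L\lambda^N)^{-R} = C_0' \lambda^{-NR}. \]
The remaining case $N < m$ is routine: the sequence Bowen ball lies in a single preimage of an $\epsilon$-ball, whose measure is uniformly bounded, while $\lambda^{-NR} \ge \lambda^{-mR}$ stays bounded below, so this bounded quantity can be absorbed into a final enlargement of $C_0$. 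The only real obstacle is the bookkeeping around the interpolation constants and the scaling; once the norm equivalence on $\mathcal{P}_m$ is in hand, the result falls out of Proposition \ref{prop:Bowen-ball}.
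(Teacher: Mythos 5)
Your lower bound is exactly the paper's: $B_\varphi^{L\lambda^N}(\bar{x},\epsilon) \subset B_\varphi^N(\bar{x},\epsilon;\set{L\lambda^k})$ plus the unconditional lower bound of Proposition \ref{prop:Bowen-ball}. For the upper bound, however, there is a genuine gap at the very first step, where you convert the hypothesis $d(\varphi^{L\lambda^k}\bar{x},\varphi^{L\lambda^k}\bar{y})<\epsilon$ into the bounds $\abs{a_j^i(L\lambda^k)}\lesssim\epsilon$ (and likewise for the twisted pairs). Those polynomial values are attached to the \emph{fixed} group element $X$ with $\bar{y}=\bar{x}\algexp(X)$ chosen at time $0$; the identity $d_{\Gamma\backslash G}(\varphi^t\bar{x},\varphi^t\bar{y})\approx\norm{X_t}$ is only valid as long as that same lift realizes the distance, i.e.\ as long as the points have not separated beyond the injectivity radius and recombined via a nontrivial $\gamma\in\Gamma$. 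Lemma \ref{lem:bilipischitz} gives you this only under the hypothesis that the orbits stay $\epsilon$-close for \emph{all} $t$ in an interval; it says nothing when you only know closeness at the widely spaced times $L\lambda^k$. In between two consecutive test times the orbits can drift arbitrarily far apart on $G$ and return $\epsilon$-close in $\Gamma\backslash G$ through a different element of $\Gamma$, in which case $\abs{a_j^i(L\lambda^k)}$ need not be small and your interpolation has nothing to interpolate. This non-splitting issue is precisely what the paper's proof is organized around: it proves the inclusion into the continuous-time Bowen ball \emph{inductively} in $N$, using the inductive containment up to time $L\lambda^N$ (hence no splitting there) together with the coefficient bounds from the proof of Proposition \ref{prop:Bowen-ball} to keep $\norm{X_t}$ below the injectivity radius on all of $[0,L\lambda^{N+1}]$, so that the constraint at the new test time is genuinely a constraint on the universal cover.

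The good news is that your quantitative mechanism is sound and can be grafted onto that induction. The Lagrange/norm-equivalence estimate is correct: for nodes $L\lambda^{N-m},\dots,L\lambda^N$ the Lagrange basis polynomials satisfy $\sup_{[0,L\lambda^{N+1}]}\abs{\ell_j}\le\prod_{i\neq j}\lambda^{m+1}/\abs{\lambda^j-\lambda^i}$, a constant depending only on $m$ and $\lambda$, so smallness at those $m+1$ times does control the coordinate polynomials on the whole interval $[0,L\lambda^{N+1}]$ with an $N$-independent constant (this is in fact cleaner than the coefficient-juggling in the paper, and it avoids any compounding of constants across the induction). But you must run it inside an induction whose hypothesis guarantees that the lift has not changed up to time $L\lambda^N$ (so the node values are legitimate), and whose conclusion keeps $\norm{X_t}$ below a fixed fraction of the injectivity radius up to time $L\lambda^{N+1}$ (so the next step is again legitimate); the base case $N\le m$ is handled by the boundedness of $\Ad(\algexp(tU))$ on the compact time interval $[0,L\lambda^m]$. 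As written, with the non-splitting step omitted, the claimed inclusion $B_\varphi^N(\bar{x},\epsilon;\set{L\lambda^k})\subset B_\varphi^{L\lambda^N}(\bar{x},C'\epsilon)$ is not justified.
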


\begin{proof}
Since $B_\varphi^{L\lambda^N}(\bar{x},\epsilon) \subset B_\varphi^N(\bar{x},\epsilon;\set{L\lambda^k})$, the lower bound follows from Proposition \ref{prop:Bowen-ball}. The upper will follow if we show that $B_\varphi^N(\bar{x},\delta;\set{L\lambda^k})\subset B_\varphi^{L\lambda^N}(\bar{x},\epsilon)$ for some $\delta,\epsilon >0$ independent of $N$. We prove the inclusion inductively. For the base step, notice that for any $\epsilon$ we may choose $\delta > 0$ so that this holds simply by continuity of $\varphi^L$. For the inductive step, we suppose we have it for $N$. Then from the proof of Proposition \ref{prop:Bowen-ball}, we know in fact that if $x$ is a lift of $\bar{x}$, and $\bar{y} \in B_\varphi^{L\lambda^N}(\bar{x},\epsilon)$, using the notation in the proof of Proposition \ref{prop:Bowen-ball}, $\abs{a_k^i(0)} < C_1\lambda^{-NR}\epsilon$. This implies that $\abs{a_k^i(L\lambda^{N+1})} < C_2(L\lambda^{N+1})^{R}C_1\lambda^{-NR}\epsilon = C_1C_2L^{R}\lambda^R\epsilon$ and thus $\|X_t\|<C_3\epsilon$ for $0\leq t\leq L\lambda^{N+1}$. But since distances are distorted up to a uniform constant by taking algebraic and geometric exponentials, we may choose $\epsilon$ to have the following property: if $\bar{y} \in B_\varphi^N(\bar{x},\delta;\set{L\lambda^k})$, $d(\varphi^{L\lambda^{N+1}}(\bar{x}),\varphi^{L\lambda^{N+1}}(\bar{y})) = d(\varphi^{L\lambda^{N+1}}(x),\varphi^{L\lambda^{N+1}}(y))$, where $x$ and $y$ are lifts chosen to minimize the distance at time $t =0$. In particular, the points never split on the universal cover, and we know they are Bowen close up to time $L\lambda^{N+1}$.

\end{proof}

\begin{proof}[Proof of Theorem \ref{thm:mainco}]
With Corollary \ref{SequenceBowen} in hand, the computation for topological sequence entropy follows from an argument virtually identical to that of the proof of Theorem \ref{thm:main}. We give a brief argument on the measure-theoretic case. In order to simplify the notation, we denote $T=\varphi_L$.

It is sufficient to consider a family of partitions $\{\mc P_\alpha\}$ which atoms are small cube under the chain structure whose radius go to zero as $\alpha \to\infty$. 
We can pick $\alpha$ large enough to guarantee the diameter of each atom is less than $\epsilon$, we write $\mc P = \mc P_\alpha$ to simplify notation. Then consider the measure of atom of $\mc P^{-n}=\bigvee_{k=0}^n T^{-L\lambda^k}\mc P$. Notice that if two points $\bar{x},\bar{y}$ belong to the same atom of $\mc P^{-n}$, they stay $\epsilon$ close for iterations $L,L\lambda,\ldots,L\lambda^n$ of $T$, which is equivalent to $y$ belongs to $B_\varphi^n(\bar{x},2\epsilon;\set{L2^k})$.  Thus the maximal of measure of $\mc P^{-n}$'s atom should no large than the supremem of $\mu(B_\varphi^n(\bar{x},2\epsilon;\set{L2^k}))$ over all $\bar{x}$. Then by Corollary \ref{SequenceBowen}, we have
 $$\mu(B_\varphi^n(\bar{x},2\epsilon;\set{L\lambda^k}))\le C_0^{-1}\lambda^{-nR}.$$

This immediately yields that $h_{A,\mu}(\varphi) \ge R \log \lambda$. For the upper bound, by the same method in standard Variational principle, we have $h_{A,\mu}(\varphi) \le h_{A,\operatorname{top}}(\varphi) = R \log \lambda$.
\end{proof}

\section{Slow entropy of flows on non-compact spaces}
\label{sec:noncompact}
Although in previous sections we only deal with the cocompact lattice, our method in fact can be applied to non-cocompact lattice case by following arguments.

\subsection{Hamming Balls estimates in Noncompact Homogeneous Spaces}
In this section, we suppose $\Gamma\backslash G$ is not compact. Let $K\subset \Gamma\backslash G$ be such that $\mu(K)>1-\delta$ for some $\delta>0$. Given $\epsilon > 0$, choose a partition $\mc P_{\epsilon}$ of $\Gamma\backslash G$ such that $K^c$ is one atom of partition and the diameter of the remaining atoms is less than $\epsilon$.

\begin{proposition}\label{prop:NoncompactProp}
Let $K$ be a compact subset of $\Gamma \backslash G$ such that $\mu(K)>1-\delta$ for some $\frac{1}{100}>\delta>0$ and $\eta = \inj(K)$. Choose a partition $\mc P := {\mc P}_{c_\varphi\eta}$ as above, where $0 < c_\varphi < 1$ is as in Lemma \ref{lemma:locbeh} applied to the flow on the universal cover. Then there exists $T_0,\epsilon_0 > 0$, a set $L \subset K$ such that $\mu(L) > 1 - 2\delta$ with the following property: for $0 < \epsilon < \epsilon_0$, $\bar{x} \in L$ and $\bar{y} \not\in \bar{x}C_G(U)$, if $\bar{d\,}_{\varphi,\mc P}^T(\bar{x},\bar{y}) < \epsilon$, then $d_\Phi^{\frac{7}{10}T}(x_{\frac{3}{10}T},y_{\frac{3}{10}T}) < \eta$, where $x_{\frac{3}{10}T},y_{\frac{3}{10}T}$ are lifts of 
$\varphi^{\frac{3}{10}T}\bar{x},\varphi^{\frac{3}{10}T}\bar{y}$ minimizing the distance in the universal cover.
\end{proposition}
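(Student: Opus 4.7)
The plan is to mirror the cocompact proof of Lemma \ref{lemma:longblock}, lifted to the universal cover $G$, the new ingredient being that orbits starting from a suitable set $L$ spend most of their time in the ``thick part'' $K$ where the injectivity radius is at least $\eta$. First I would construct $L$: by the mean ergodic theorem applied to $\mathbf{1}_{K^c}$, the limits $\limsup_{T\to\infty}\frac{1}{T}\int_0^T \mathbf{1}_{K^c}\circ\varphi^t\,dt$ exist almost everywhere and integrate to $\mu(K^c)<\delta$; Egorov's theorem then furnishes a set $L\subset K$ with $\mu(L)>1-2\delta$ and a uniform $T_0$ such that for every $\bar x\in L$ and $T\ge T_0$ the orbit $\{\varphi^t\bar x\}_{t\in[0,T]}$ spends at most $T/10$ units of time outside $K$.

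For $\bar x\in L$ and $\bar y$ with $\bar d^{\,T}_{\varphi,\mc P}(\bar x,\bar y)<\epsilon<1/10$, I would set
\[\omega=\{t\in[0,T]:\varphi^t(\bar x)\in K \text{ and the codings agree at time } t\},\]
so that $|\omega|\ge T(1-\epsilon)-T/10\ge 4T/5$. Because the non-$K^c$ atoms of $\mc P$ have diameter $<c_\varphi\eta<\eta\le\inj(K)$, for each $t\in\omega$ there is a unique ``close lift'' of $\varphi^t(\bar y)$ within $\eta$ of any chosen lift of $\varphi^t(\bar x)$, and this close lift is automatically within $c_\varphi\eta$. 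Picking $t_0=\min\omega$ and close lifts $x_{t_0},y_{t_0}$, I extend them by the flow $\Phi$ on $G$ and let $S_0$ be the first time $d_G(x_{t_0+s},y_{t_0+s})$ reaches $\eta$. Lemma \ref{lemma:locbeh}, applied to the universal cover where $\inj=\infty$, gives $|\{s\in[0,S_0]: d_G(x_{t_0+s},y_{t_0+s})<c_\varphi\eta\}|\le S_0/10$; combined with uniqueness of the close lift this yields $|\omega\cap[t_0,t_0+S_0]|\le S_0/10$. Iterating the construction---setting $t_{i+1}=\inf(\omega\cap(t_i+S_i,T])$ with fresh close lifts at $t_{i+1}$---gives a finite sequence $t_0<t_1<\cdots<t_J$, and the direct count
\[\tfrac{4T}{5}\le|\omega|\le \sum_{i<J}\tfrac{S_i}{10}+(T-t_J)\le \tfrac{t_J}{10}+(T-t_J)\]
forces $t_J\le 2T/9<3T/10$ and $t_J+S_J\ge T$, so the tracked lifts satisfy $d_G(x_t,y_t)<\eta$ throughout $[t_J,T]\supset[3T/10,T]$.

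Finally I must reconcile the tracked lifts at $3T/10$ with those minimizing $d_{\Gamma\backslash G}(\varphi^{3T/10}\bar x,\varphi^{3T/10}\bar y)$. By left-invariance of $d_G$, the minimizing pair can be written as $(x_{3T/10},\gamma_* y_{3T/10})$ for some $\gamma_*\in\Gamma$; if $\gamma_*=e$ the conclusion is immediate from the tracked Bowen estimate, while if $\gamma_*\ne e$ then two distinct $\Gamma$-translates of $y_{3T/10}$ lie within $\eta$ of $x_{3T/10}$, forcing $\varphi^{3T/10}(\bar y)$ into the cusp. Here the hypothesis $\bar y\notin \bar x\,C_G(U)$ enters: by Lemma \ref{lem:polynomial-growth} neither the tracked polynomial $\|X_t\|^2$ nor the minimizing polynomial $\|Y_t\|^2$ is constant, so the Brudnyi--Ganzburg inequality (Lemma \ref{lem:brudnyi}) applied on the large-measure set $\omega\cap[t_J,T]$ upgrades smallness on a large subset to smallness on all of $[3T/10,T]$ after tightening the initial choice of $c_\varphi$ by a factor depending only on the maximal chain depth $m=\max m_i$. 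The main obstacle is precisely this last step: several candidate minimizing pairs can coexist in the cusp and each carries its own polynomial evolution to control, so the non-centralizer hypothesis is what guarantees every such polynomial has positive degree and hence that Brudnyi--Ganzburg provides nontrivial uniform control---an issue absent from the cocompact proof of Lemma \ref{lemma:longblock}.
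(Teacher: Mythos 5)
Your argument is, in its core, the same as the paper's: the ergodic-theorem choice of $L$ so that orbits from $L$ spend at least $\tfrac{9}{10}$ of the time in $K$; the decomposition of $[0,T]$ into excursions that begin at a time when $\varphi^t\bar{x}\in K$ and the coding forces $d_{\Gamma\backslash G}(\varphi^t\bar{x},\varphi^t\bar{y})<c_\varphi\eta$ (so that, by $\eta=\inj(K)$, the tracked lifted distance is also $<c_\varphi\eta$) and end when the lifted distance first reaches $\eta$; the application of Lemma \ref{lemma:locbeh} on the universal cover, where the injectivity radius is infinite, to show each completed excursion contributes at most a tenth of its length to the set of good coding times; and the measure count forcing the last excursion to start before $\tfrac{3}{10}T$ and to last beyond $T$. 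This is exactly the structure of the paper's proof: its stopping times $S_i$, $T_{i+1}$ are your $t_i$, $t_i+S_i$, and its sets $A$, $B_i$ together with Claim \ref{claim:longBadTime} encode your $\omega$ and your use of ``uniqueness of the close lift.'' The only genuine divergence is your closing paragraph: the attempted reconciliation of the tracked lifts with the distance-minimizing lifts at time $\tfrac{3}{10}T$ via a cusp dichotomy and a second Brudnyi--Ganzburg argument is not a proof as written, but it is also not something the paper attempts --- the paper simply flows the lifts that minimize at the starting time $S_{i_0}\le \tfrac{3}{10}T$ of the final excursion, and in the only place the proposition is used (Section \ref{sec:noncompact}) the Hamming ball is placed inside a union of Bowen balls over \emph{all} lifts of $\varphi^{\frac{3}{10}T}\bar{x}$, so the existence of one lift pair staying $\eta$-close on $[\tfrac{3}{10}T,T]$, which your main argument does deliver, is what is actually needed. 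Relatedly, the hypothesis $\bar{y}\notin\bar{x}\,C_G(U)$ plays a lighter role than you assign it: it only guarantees that the lifted distance genuinely reaches $\eta$, i.e.\ that the stopping time $S$ in Lemma \ref{lemma:locbeh} exists (if it never does, the desired closeness on $[\tfrac{3}{10}T,T]$ holds trivially); it is not needed to feed a polynomial-degree argument in a final reconciliation step.
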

\begin{proof}
Let $L_0(T_0,\xi) \subset X$ denote the set of all points $\bar{x} \in \Gamma \backslash G$ such that \[\set{t \in [0,T] : \varphi^t(\bar{x}) \in K} \ge (1-\xi)\mu(K)T \] for all $T \ge T_0$. Then by the ergodic theorem, for any $\xi > 0$, we may find $T_0$ sufficiently large such that $\mu(L(T_0,\xi)) > 1-\delta$. We will specify $\eta$ later, and take $L = L_0 \cap K$, so that $\mu(L) \ge 1-2\delta$.

Let $T \ge T_0$, $\bar{x} \in L$,
 $\bar{y}$ be such that $\bar{d\,}_{\varphi,\mc P}^T(\bar{x},\bar{y}) < \epsilon$ and $x_t,y_t \in G$ be lifts of $\varphi_t(\bar{x}),\varphi_t(\bar{y})$ to $G$ such that $d_G(x_t,y_t) = d_{\Gamma\backslash G}(\varphi_t(\bar{x}),\varphi_t(\bar{y}))$. Let $\Phi_t$ denote the lift of $\varphi_t$ to the universal cover $G$. Divide the interval $[0,T]$ into subintervals by following method. We inductively define a sequence $S_i$ and $T_i$ as follows. Let $T_0 = 0$, and

\begin{eqnarray*}
S_i & = & \min\{t \ge T_i : \varphi^t\bar{x}\in K\text{ and }d_{\Gamma\backslash G}(\varphi^t\bar{x},\varphi^t\bar{y})\leq c_{\varphi}\eta\} \\
T_{i+1} & = & \min\{t \ge S_i : d_G(\Phi^{t-S_i}(x_{S_i}),\Phi^{t-S_i}(y_{S_i}))=\eta\}
\end{eqnarray*}

Note that $S_0 < \infty$ since $\bar{x} \in L$. 
Then by Lemma \ref{lemma:locbeh} for universal cover, we have the following claim.
\begin{claim}\label{claim:longBadTime}
For any $i\geq1$ such that $T_i\leq T$, we may choose $C_1>0$ (as in the definition of $\epsilon_0$) independent of $i$ such that for set $B_i=\{t\in[S_{i-1},T_i] : d_G(\Phi^{t-S_{i-1}}x_{S_{i-1}},\Phi^{t-S_{i-1}}y_{S_{i-1}})\geq c_{\varphi}\eta\}$, where $x_{S_{i-1}},y_{S_{i-1}}$ are the lifts of $\varphi^{S_{i-1}}\bar{x},\varphi^{S_{i-1}}\bar{y}$ minimize the distance at time $S_{i-1}$ in the universal cover, then we have
$$|B|\geq\frac{9}{10}(T_i-S_{i-1}).$$
\end{claim}







Now denote $A=\{t\in[0,T] : \varphi^t\bar{x}\in K\text{ and }d_{\Gamma\backslash G}(\varphi^t\bar{x},\varphi^t\bar{y})<\frac{c_{\varphi}\eta}{2}\}$. Recall that the atoms of ${\mc P}_{c_\varphi\eta}$ have diameter less than $c_\varphi\eta$ other than $K^c$. Therefore, by choosing $\epsilon_0$ (which bounds $\bar{d\,}_{\varphi,\mc P}^T(\bar{x},\bar{y})$) and $\xi$ (as in the definition of $L$) small enough, we can guarantee $|A|\geq\frac{9}{10}T$. 
If $t\in(T_i,S_i)$, either $\varphi^t\bar{x}\notin K$ or $d_{\Gamma\backslash G}(\varphi^t\bar{x},\varphi^t\bar{y})>c_{\varphi}\eta$. In particular, if $S_i < T$

\begin{equation}\label{eq:ComplentaryBadSet}
(T_i,S_i)\cap A=\emptyset,
\end{equation}
and thus
$$\abs{\bigcup(T_i,S_i)}<\frac{1}{10}T.$$


From the Claim \ref{claim:longBadTime}, we know that whenever $T_i < T$
\begin{equation}\label{eq:longB_i}
|\bigcup_iB_i|\geq \frac{9}{10}|\bigcup_i [S_{i-1},T_i]|.
\end{equation}

We claim that $A \cap B_i = \emptyset$ for every $i$. Indeed, if $t\in A \cap B_i$, then \[c_{\varphi}\eta\leq d_G(\Phi^{t-S_i}x_{S_i},\Phi^{t-S_i}y_{S_i})\leq\eta = \inj(K).\] Therefore, $d_{\Gamma\backslash G}(\varphi^t\bar{x},\varphi^t\bar{y})\geq c_{\varphi}\eta$ and thus contradicts to the definition of $t\in A$. 
Then we have following inequality,
\begin{equation}
\begin{aligned}
|A|\leq |A\cap(\bigcup B_i)|+|A\cap(\bigcup ([S_{i-1},T_i]\setminus B_i))|+|A\cap(\bigcup (T_i,S_i))|+|A\cap I|.
\end{aligned}
\end{equation}

where $I$ is either $[T_{i_0},T]$ or $[S_{i_0},T]$. Recall that the first and third term on the right equal to $0$ (due to \eqref{eq:ComplentaryBadSet} and paragraph after \eqref{eq:longB_i}) and second term is less than $\frac{1}{10}T$(due to \eqref{eq:longB_i}), thus we know $|A\cap I |\geq\frac{7}{10}T$. In particular, $I$ takes the form $[S_{i_0},T]$ (since $A \cap (T_i,S_i) = \emptyset$ for every $i$) and has length at least $7/10$.
By setting $[a,b] = [S_{i_0},T]$, we conclude the proposition.
\end{proof}

\subsection{Proof of Theorem \ref{thm:main} - Noncompact Case}
We show that the measure-theoretic polynomial entropy is at least $R$ and the topological polynomial entropy is at most $R$. Then by Theorem \ref{thm:Goodwyn}, we conclude the main theorem. We first show the lower bound. By Proposition \ref{prop:NoncompactProp}, we know that the Hamming ball with radius $\epsilon$, center $\bar{x}$ and partition ${\mc P}$ will be contained in $$\varphi^{-\frac{3}{10}T}\pi\left(\bigcup_{x'\in\pi^{-1}(\varphi^{\frac{3T}{10}}\bar{x})} B_\Phi^{7T/10}(x',\eta)\right).$$

Recall that $\Gamma$ acts on left, quasi-unipotent flow acts on right and our metric is left invariant, thus for any $x_1,x_2\in\pi^{-1}(\varphi^{\frac{3T}{10}}\bar{x})$, we have
$$\pi(B_\Phi^{7T/10}(x_1,\xi))=\pi(B_\Phi^{7T/10}(x_2,\eta)).$$

Hence since $\inj(x) = \infty$ for the flow $\Phi$ on the universal cover, by Proposition \ref{prop:Bowen-ball} applied to $\Phi$:
\[ \mu(B_{\varphi,\mc P}^T(\bar{x},\epsilon/2)) \le C_0(7T/10)^{-R}. \]

Given any cover of a large measure subset of $L$ by Hamming balls, we can replace the centers with points of $L$, which will still cover $L$ if we double the radius $\epsilon/2$ to $\epsilon$. So we need at least $CT^{R}$ $(\epsilon/2,T)$-Hamming balls to cover $L$.

Now we turn to the topological category. Notice that the lower bound of Proposition \ref{prop:Bowen-ball} applies without the assumption on $\inj(\Gamma \backslash G)$. So as in the proof of the compact case, we get the upper bound on slow entropy is $T^R$.


\section{Computation of Slow Entropy for Specific Examples}
\label{sec:specific}
In this section we prove Corollary \ref{cor:sl2-triple} and compute the polynomial entropy of Examples \ref{ex:semisimple}-\ref{ex:twisted} (Corollary \ref{cor:specific}). Since we will use Corollary \ref{cor:sl2-triple} in the computations of the examples, we prove it first.

\subsection{Proof of Corollary \ref{cor:sl2-triple}}

Let $G$, $U$ and $(V,X,U)$ be as in the discussion preceding the statement. The bracket relations on $V$, $X$ and $U$ show that they correspond to an algebra homomorphism $\varphi : \mf{sl}(2,\R) \to \mf g$ defined via:

\[ \varphi\begin{pmatrix} 0 & 0 \\ 1 & 0 \end{pmatrix} = U \qquad \varphi\begin{pmatrix} 1 & 0 \\ 0 & -1 \end{pmatrix} = X \qquad \varphi\begin{pmatrix} 0 & 1 \\ 0 & 0 \end{pmatrix} = V \]

This defines a representation of $\mf{sl}(2,\R)$ by sending $Y \mapsto \ad_Y : \mf g \to \mf g$. Since $\mf{sl}(2,\R)$ is semisimple, this representation can be decomposed into a sum of irreducible ones. The finite-dimensional irreducible representations of $\mf{sl}(2,\R)$ are well-classified, and are indexed by $\N_0$. If $n \in \N_0$, let $V_n$ be the space spanned by $\set{X_0,\dots,X_n}$, so that $\dim(V_n) = n+1$. Then the action of $\mf{sl}(2,\R)$ is given by:

\[ \begin{pmatrix} 0 & 0 \\ 1 & 0 \end{pmatrix} \cdot X_j = c_{n,j}X_{j+1} \qquad \begin{pmatrix} 1 & 0 \\ 0 & -1 \end{pmatrix} \cdot X_j = (2j-n)X_j \qquad
\begin{pmatrix} 0 & 1 \\ 0 & 0 \end{pmatrix} \cdot X_j = c_{n,j}'X_{j-1} \]

for some nonzero universal constants $c_{n,j}$ and $c_{n,j}'$ and we make the convention that $X_{-1} = 0$ and $X_{n+1} = 0$. In particular, each $V_n$ gives a Jordan block of length $n+1$ for $U$. Note that if $[U,Y] = 0$, we may decompose $Y$ via a decomposition into the irreducible subrepresentations. Thus, the centralizer of $U$ is exactly the sum of the centralizers in each irreducible subspace. But only the $X_n$ terms in each irreducible subspace commute with $U$, and here the eigenvalue of $X$ is $n$. So for each eigenvector of $X$ with eigenvalue $n$, there is a corresponding chain of length $n+1$ (which is depth $n$), and this gives a basis. Applying Theorem \ref{thm:main}, we get the Corollary.

\subsection{Calculation for Example \ref{ex:semisimple}}

Let us begin by assuming that we have a single block in $SL(d,\R)$ (in this case the nilpotent element $U$ is sometimes called {\it principal}). According to Corollary \ref{cor:sl2-triple}, we may identify an $\mf{sl}(2,\R)$ triple and the centralizer of $U$. Direct computation shows an $\mf{sl}(2,\R)$ triple can be constructed by taking:

\[ X = \begin{pmatrix}
d-1 \\
& d-3 \\
 & & \ddots \\
 & & & -d+1
\end{pmatrix} \qquad V = \begin{pmatrix}
0 \\
d-1 & 0 \\
 & \ddots & \ddots \\
& & kd -k^2 & 0\\
& & & \ddots & \ddots \\
& & & & & d-1 & 0
\end{pmatrix}\]

Let $E_{i,j}$ denote the matrix whose entries are all zero, except for the $(i,j)$th entry which is 1. Since the eigenvalues of $X$ on the centralizer of $U$ must be positive integers, know that that the centralizer lies completely in the upper triangular matrices and any element must be in the sums of $W_n = \bigoplus_{i=1}^{d-k} \R E_{i,i+k}$ for a fixed $k \ge 0$ (since these are the eigenspaces of $X$). From direct computation one sees that each such subspace, there is a unique element (up to scaling) commuting with $U$ (it is exactly $\sum_{i=1}^{d-k} E_{i,i+k}$). Hence, the eigenvalues of $\ad_X$ on $C(U)$ are all simple, and equal to $2,4,\dots,2d-2$. So by Corollary \ref{cor:sl2-triple}, the topological slow entropy is:

\[ \sum_{k=1}^{d-1} 2k(2k+1)/2 = \frac{d}{6}(d-1)(4d+1)\]

This shows the result for a single block. To consider multiple blocks, note that there exists an $\mf{sl}(2,\R)$ triple respecting the block form of $U$, by taking the block forms of $X$ and $V$. So in each $k_i \times k_i$ block, we may apply our analysis from the previous sections for elements of the centralizer in each block. This accounts for the first sum in the expression. Each off-diagonal block is preserved by the $\mf{sl}(2,\R)$ triple, so we will find a basis of the centralizer by considering each block independently. Furthermore, since the upper triangular blocks are dual to the lower triangular blocks and all $\mf{sl}(2,\R)$ representations are self-dual, we may consider only the upper triangular ones and double the result. Again, an element of the centralizer contributing to the slow entropy must lie in a positive eigenspace of $X$, so if we consider a block of $k_i \times k_j$ matrices (we suppose that $k_j\geq k_i$), elements of the centralizer must lie in $\bigoplus_{a=1}^{k_j-b}\R E_{a,a+b}$ for a fixed $0 \le b \le k_j-1$. Direct computation shows that an element of the centralizer exists in such a subspace (and is unique up to scalar) if and only if $k_j - k_i \le b \le k_j - 1$ (and is given by $\sum_{a = 1}^{k_j-b} E_{a,a+b}$). The eigenvalue of $X$ on such a subspace is given by $\set{k_j-k_i+2\ell : 0 \le \ell \le k_i-1}$.

 Hence each block, indexed by $1 \le i < j \le m$, we have a contribution to slow entropy equal to:

\[ \sum_{\ell=0}^{k_i-1} (k_j-k_i+2\ell)(k_j-k_i+2\ell+1)/2 =  \frac{1}{6} k_i \left(k_i^2+3k_j^2-3k_j-1\right) \]

Recalling that we must double the contribution for the lower-triangular blocks, we get the formula.

\subsection{Calculation for Example \ref{ex:nilpotent}}

In this case it is easy to find a chain basis consisting of two chains, since one may check that ${\ad_U}^d = 0$ but ${\ad_U}^{d-1} \not= 0$. As a result, there is at least one chain of depth $d-1$ (and dimension $d$). But since $\dim(\mf n) = d+1$, the only possiblitiy is to have one chain of length $d$ and another of length one. This gives the formula immediately.

\subsection{Calculation of Example \ref{ex:twisted}}

Observe that because $U \in \mf{sl}(d,\R)$, $\ad_U$ preserves the subspaces $\mf{sl}(d,\R)$ and $\R^N$. We can hence find a chain basis subordinate to this splitting. We may use the calucluations for Example \ref{ex:semisimple} to get the first part. Note that $\ad_U$ acts on $\R^N$ as $d\rho(U)$, a nilpotent matrix, whose Jordan normal form corresponds to a chain basis (see the proof of Lemma \ref{lem:chain-basis}). This gives the formula.

\section*{Acknowledgements}
The authors would like to thank Anatole Katok for warm encouragements and many helpful discussions.

\end{document}